\newlength\figureheight
\newlength\figurewidth
\pgfplotsset{%
        tick label style={font=\scriptsize},
        label style={font=\footnotesize},
        legend style={font=\footnotesize},
        every axis plot/.append style={very thick}
}
 \renewcommand{\leq}{\leqslant}
\renewcommand{\geq}{\geqslant}
\renewcommand{\fnum@figure}[1]{\textbf{\figurename~\thefigure}. }
\renewcommand{\fnum@table}[1]{\textbf{\tablename~\thetable}. }
\newcommand{\vb}{\vspace{3.2mm}}
\newtheorem{lemma}{Lemma}
\newtheorem{corollary}{Corollary}
\newtheorem{assumption}{Assumption}
\newtheorem{theorem}{Theorem}
\newtheorem{remark}{Remark}
\newtheorem{definition}{Definition}
\newtheorem{proposition}{Proposition}
\DeclareMathOperator*{\esssup}{ess\,sup}
\DeclareMathOperator*{\essinf}{ess\,inf}
\begin{document}
\selectlanguage{USenglish}

        \title[Spatiotemporal Hawkes processes  with a graphon-induced connectivity structure]{Spatiotemporal Hawkes processes\\ with a graphon-induced connectivity structure}       
\author{Justin Baars, Roger J.~A. Laeven, and Michel Mandjes}
       
        \begin{abstract}
We introduce a spatiotemporal self-exciting point process $(N_t(x))$, boundedly finite both over time $[0,\infty)$ and space $\mathscr X$, with excitation structure determined by a graphon $W$ on $\mathscr{X}^2$. 
This \emph{graphon Hawkes process} generalizes both the multivariate Hawkes process and the Hawkes process on a countable network, and despite being infinite-dimensional, it is surprisingly tractable. 
After proving existence, uniqueness and stability results, we show, both in the \emph{annealed} and in the \emph{quenched} case, that for compact, Euclidean $\mathscr X\subset\mathbb R^m$, any graphon Hawkes process can be obtained as the suitable limit of $d$-dimensional Hawkes processes $\tilde N^d$, as $d\to\infty$. 
Furthermore, in the stable regime, we establish an FLLN and an FCLT for our infinite-dimensional process on compact $\mathscr X\subset\mathbb R^m$, while in the unstable regime we prove divergence of $N_T(\mathscr X)/T$, as $T\to\infty$. 
Finally, we exploit a cluster representation to derive fixed-point equations for the Laplace functional of $N$, for which we set up a recursive approximation procedure. 
We apply these results to show that, starting with multivariate Hawkes processes $\tilde N^d_t$ converging to stable graphon Hawkes processes, the limits $d\to\infty$ and $t\to\infty$ commute.

\vb

\noindent
 {\sc Keywords.} Spatiotemporal point processes $\circ$ Hawkes processes $\circ$ Limit theorems $\circ$ Random graphs $\circ$ Transform analysis

\vb

\noindent
{\sc Affiliations.} 
JB and RL are with the Dept.~of Quantitative Economics, University of Amsterdam. RL is also with E{\sc urandom}, Eindhoven University of Technology, and with C{\sc ent}ER, Tilburg University. MM is with the Mathematical Institute, Leiden University, and is also affiliated with Korteweg-de Vries Institute for Mathematics, University of Amsterdam; E{\sc urandom}, Eindhoven University of Technology, Eindhoven; and Amsterdam Business School, University of Amsterdam. 
The research of JB and RL is 
funded in part by the Netherlands Organization for Scientific Research under an NWO VICI grant (2020--2027).
The research of MM is funded in part by the NWO Gravitation project N{\sc etworks}, grant number 024.002.003.

\vb

\noindent
{\sc Email.} \url{j.r.baars@uva.nl}, \url{r.j.a.laeven@uva.nl}, \url{m.r.h.mandjes@math.leidenuniv.nl}.

\vb

\noindent
\textit{Date}: \today.

        \end{abstract}

        \maketitle
 
\newpage

\section{Introduction}
In a wide variety of applied fields, self-exciting point processes are used to model natural, economic and social phenomena that interact locally with their own history, in such a way that past events can trigger events in the future. 
Besides modeling self-exciting behavior in the temporal dimension, nowadays spatiotemporal models are often used, where the excitation occurs both in time and in space. 
This is usually done by introducing a mutually exciting, or multivariate Hawkes, process on $d$ coordinates, interpreting the coordinates as locations in space. 
The basic self- and mutually exciting point process 
was introduced in 1971, see \cite{Hawkes, Hawkes2}. 

Such multivariate Hawkes processes are employed to capture excitation behavior over time and space in, e.g.: seismology (\cite{seismology, DaleyVereJones, Ikefuji}), where earthquakes tend to trigger sequences of aftershocks both at the same location and at nearby or adjacent fault lines; neuroscience (\cite{neuroscience}), where high-dimensional processes can be used to assess neuronal spike trains; epidemiology (\cite{covid}), where outbreaks of infectious diseases spread contagiously both over time and over different geographical locations; finance (\cite{ACL15, finance2, Large}), where financial shocks and transactions, such as stock trades or order arrivals in electronic markets, cluster over time and across markets; social network analysis (\cite{social media2, social media}), for the prediction of online user activity, or to explain the contagious nature of information diffusion and the spread of online content in social media platforms; and criminology (\cite{criminology}), where crimes like burglaries or gang-related activities tend to cluster in time and across neighborhoods. 

In these modeling approaches, the respective space is typically divided into $d$ locations, and one measures the number of events $N_t=(N_t(1),\ldots,N_t(d))$ that have occurred by time $t$ at those $d$ locations. 
By not distinguishing between more granular locations \emph{within} each of those $d$ locations, one essentially divides the space into $d$ approximating, `homogeneous' locations or `subpopulations', possibly neglecting important information.
The main advantage of assuming this homogeneity within subspaces is that the probabilistic behavior of the corresponding (finite-dimensional) multivariate Hawkes process is well understood; see e.g., \cite{Stabilitypaper, Massoulie, Zhu, Zhu2}. 
Furthermore, it is highly tractable:
moments can readily be determined for Markovian models (\cite{elementary, matrix method, Infinite server queues});  
transforms can be approximated iteratively (\cite{multivariateKLM}); 
heavy-tailed and heavy-traffic asymptotics have been identified (\cite{multivariateKLM, Infinite server queues}); a broad range of scaling limits have been studied (\cite{Bacry, Horst, Rosenbaum1, Rosenbaum2}); and recently results on the distribution of the cluster duration of the Hawkes process have become available along with the combinatoric structure inherent to its branching representation (\cite{Conditional uniformity}). 
Moreover, a substantial amount of research has been done on statistical inference, both for the parametric case (\cite{parametrics}) and, more recently, for the nonparametric case (\cite{Boswijk, Hansen, Kirchner}). 

From this well-understood realm, the focus within the applied probability literature has gradually shifted towards understanding Hawkes processes in high-dimensional settings; see \cite{Galves, Massoulie} for early research in this direction.  
More recently, Hawkes processes on large, \textit{countable} networks have been defined in the influential paper \cite{Delattre16}, where the connectivity structure is provided by an infinite directed graph. 
In such a model, each node still represents a location or subpopulation. 
When a spatial structure is absent, and a complete graph on $d$ nodes is imposed for the interaction structure, the large population limit yields an inhomogeneous Poisson process with a \emph{deterministic} intensity function solving a certain convolution equation. 
This model is still quite tractable: its large-time behavior can be described in the mean-field setting and in a nearest-neighbor version of the model exact asymptotics have been provided (\cite{Delattre16}); and recently, a large deviations principle for the mean-field limit has been established (\cite{LD mean field Hawkes}).

The idea of \cite{Delattre16} has been extended in \cite{Agathe-Nerine}, where the mean-field prelimit becomes a model on $d$ nodes sampled from a Euclidean space $I=[0,1]$ or $I=\mathbb R^d$, and where the connectivity between those nodes is modeled by a graphon $W:I^2\to\mathbb R_+$. 
In contrast to \cite{Delattre16}, the coordinates in \cite{Agathe-Nerine} are sampled from an \emph{uncountable} space. 
In particular, the mean-field limit is an uncountable collection of inhomogeneous Poisson processes, 
whose intensity functions satisfy a convolution equation. 
Interestingly, in this setting, the large-time behavior can still be analyzed, and is related to the spectral radius of an operator corresponding to this convolution equation (\cite{Agathe-Nerine}). 

In different subfields of applied probability, there has been a rise in research on describing, or rather approximating, excitation behavior in an infinite network with a graphon-induced (\cite{Lovasz}) connectivity structure. 
Typically, finite-dimensional models with a graph-induced connectivity structure approximate infinite-dimensional models with a graphon-induced connectivity structure, as the dimension grows large. 
Such approximations have been investigated for macro-level, deterministic SEIR models (\cite{Naldi}), micro-level, stochastic SIR models (\cite{Pang}), and for linear threshold models (\cite{Erol}). 
In the case of Hawkes processes, spatiotemporal and ETAS models have been studied in several applied fields, including seismology and statistics (\cite{Fox, Ilhan, Kwon, Reinhart}), but to the best of our knowledge, there is no account in the literature of a Hawkes process with a graphon-induced connectivity structure. 
Perhaps the closest to such a model are $d$-dimensional ($d\in\mathbb N$) Hawkes processes with connectivity between nodes sampled from a graphon (\cite{Agathe-Nerine, Xu}), but those mutually exciting processes themselves are \emph{finite-dimensional}.

In the present work, we introduce, and formally establish, a spatiotemporal self-exciting point process $N_t(x)$ on $[0,\infty)\times \mathscr X$, where $(\mathscr X,\mathscr A,\mu)$ is some $\sigma$-finite measure space that is locally finite in both space and time, with cross-excitation determined by a graphon $W:\mathscr X^2\to\mathbb R_+$.
We define our process through a \emph{conditional intensity density} $\lambda_t(x)$, which is such that the simple point process $N$ satisfies 
$$\mathbb P(N_{t+\mathrm dt}(x+\mathrm dx)-N_t(x)=1|\mathcal F_t)=\lambda_t(x)\ \mathrm dt\mathrm dx,$$
where $(\mathcal F_t)_{t\in\mathbb R}$ is the $\sigma$-algebra generated by $(N_t)_{t\in\mathbb R}$, i.e., $\mathcal F_t=\sigma(N_s(A):s\leq t,A\in\mathscr A)$.
In the linear case, for some baseline intensity $\lambda_\infty$, a mark stochastic process $B$ and an excitation function $h$, we set $$\lambda_t(x)=\lambda_\infty(x)+\sum_{\substack{(s,y)\in N\\s<t}}B_{xy}(s)W(x,y)h(t-s).$$ 
In this way, $\lambda_t(x)$ can be interpreted as the \emph{spatial density} of the conditional intensity. 
In particular, this representation of the process implies that it is locally finite over space, so while being \emph{infinite-dimensional}, it generates a finite number of events over a bounded subset of time and space. 
We coin this process a \emph{graphon Hawkes process}.

If we partition our space $\mathscr X$ into finite measure sets $\mathscr X_k$, and take the parameters of the graphon Hawkes process constant on $\mathscr X_k$, then the location \emph{within} $\mathscr X_k$ becomes irrelevant, and our process reduces to a Hawkes process on a discrete spatial set. 
In this way, our novel process generalizes both the multivariate Hawkes process and the process studied in \cite{Delattre16}. 
Note that our graphon Hawkes process is fundamentally different from the process studied in \cite{Agathe-Nerine}, which is a Hawkes process on \emph{finitely many} coordinates with a sampled connectivity graph, and its resulting mean-field limit intensity function, though defined on a continuum of space, is \textit{deterministic} and not boundedly finite over space. 

We outline our main results and contributions to the literature. 
After having formally introduced the graphon Hawkes process in Section~\ref{sectiondefs}, we prove corresponding existence, uniqueness and stability results 
in Theorem~\ref{thm stability general X spectral radius} of Section~\ref{section existence etc}, under a condition on the spectral radius $\rho(T_{\mathrm{hom}})$ of a linear operator $T_{\mathrm{hom}}$ related to the excitation part of the conditional density intensity function. 
This provides 
general, weak stability conditions. 
Furthermore, our general infinite-dimensional framework requires functional-analytic arguments in later sections of the paper that would not be required should one work in a finite-dimensional setting. 
Whereas the results of Section~\ref{section existence etc} can be proved by suitably leveraging and generalizing existing techniques from \cite{Massoulie}, the results of Sections~\ref{section convergence results}--\ref{section6 fixed point theorems} require genuinely novel techniques and ideas.

In Section~\ref{section convergence results}, Theorems~\ref{annealed theorem} and~\ref{quenched theorem} establish that (under suitable regularity assumptions) the $d$-variate process $\tilde N^d$ obtained by averaging the parameters of a linear graphon Hawkes process $N$ --- defined on compact $\mathscr X=[\boldsymbol a,\boldsymbol b]\subset\mathbb R^m$ --- over the elements of partitions $\mathcal P^d$ of $\mathscr X$, converges uniformly on compact sets in probability to $N$, as $d\to\infty$. 
We prove this both in the \emph{annealed} and in the \emph{quenched} case. 
This shows that the graphon Hawkes process is a natural continuous-space generalization of the multivariate Hawkes process. 
In passing, we prove in Lemma~\ref{lemma offspring size} uniform boundedness for summations over operator norms of iterates of the kernel operators corresponding to steppings of a graphon (cf.\ \cite{Lovasz}, $\S$7.5 and $\S$9.2), a result that may be of interest in its own right, and might be of use when infinite-dimensional systems are approximated by finite-dimensional systems in different subfields of applied and numerical mathematics. 
In Lemma~\ref{TVlemma4}, we prove a version of the Poincaré inequality that might be interesting in its own right as well. 
Relying on those functional-analytic arguments and on stochastic coupling, we can give 
insightful proofs for our convergence results in Theorems~\ref{annealed theorem} and~\ref{quenched theorem}.

In Section~\ref{large time behavior section}, we provide a functional law of large numbers (FLLN) in the stable case $\rho(T_{\mathrm{hom}})<1$ in Theorem~\ref{FLLN theorem} and we prove divergence of this prelimit in the unstable case $\rho(T_{\mathrm{hom}})>1$ in Theorem~\ref{FLLN theorem 2}; 
hence, we provide a dichotomy in large-time behavior between those two cases. 
In the stable regime, we also describe a functional central limit theorem (FCLT) in Theorem~\ref{FCLT theorem}. 
In proving our FLLN and FCLT, we show how to essentially reduce to a finite-dimensional setting, allowing for relatively simple proofs of functional limit theorems for infinite-dimensional systems. 
Given the difficulty of proving such results, our approach may be of interest for other infinite-dimensional models as well.

Finally, we describe in Section~\ref{section6 fixed point theorems} the probabilistic behavior of the graphon Hawkes process by characterizing its Laplace functional through fixed-point equations, which provides an iterative approximation procedure in the transform domain, non-trivially extending results from \cite{multivariateKLM, Infinite server queues} to our infinite-dimensional setting. 
As an application, we show that, starting with multivariate Hawkes processes $\tilde N^d_t$ converging to stable graphon Hawkes processes, we can interchange the limits $d\to\infty$ and $t\to\infty$. 

By not discretizing space, we obtain a natural model for phenomena occurring in time and space that is more general, and possibly more realistic, than the classical multivariate Hawkes process, especially in applications where heterogeneous behavior over space is expected. 
The inaccuracy originating from discretizing space can be reduced by making the approximating, `homogeneous' sublocations smaller, i.e., by working in a higher-dimensional setting. 
In the present work, this is taken a step further by working on a continuous spatial set. 
In applications where excitation or infectivity depends continuously on the distance between two locations or on other geographical characteristics, a continuous-space model provides a clear advantage over finite-dimensional models. 
For example, the likelihood of an aftershock in location $x$ following an earthquake with epicenter $y$ naturally depends continuously on the distance between $x$ and $y$, and may also depend on geographical characteristics. 
Similarly, the likelihood of infection for COVID-19 depends on the distance between an individual and a patient, and on characteristics such as the amount and type of restrictive measures, which vary over space. 
Moreover, when applying Hawkes processes to crime modeling (\cite{criminology}), the likelihood of a crime in location $y$ following a crime in $x$ may depend on the distance between $x$ and $y$, as well as on sociological factors, such as the level of law enforcement, of social control, or of education.
Therefore, the model developed in the present work may be of importance to applied fields where multivariate Hawkes processes are used and spatial aspects are relevant. 

\subsection*{Conventions} 
Throughout this paper, $\mathbb N:=\{1,2,3,\ldots\}$ and $\mathbb N_0:=\mathbb N\cup\{0\}$. 
Furthermore, the positive half-line is denoted by $\mathbb R_+:=[0,\infty)$. 
In general, the temporal variable is denoted by $t$, whereas the spatial variable is denoted by $x$.

When writing `a multivariate Hawkes process', we mean a Hawkes process on $d\in\mathbb N$ coordinates; in particular, a multivariate Hawkes process is finite-dimensional, and we do not refer to Hawkes processes on countable networks or to graphon Hawkes processes by this term.

\subsection*{Appendix}
The proofs that are not given in the main text and some supplementary results are collected in four appendices.

\section{Definitions of the graphon Hawkes process}\label{sectiondefs}
In this section, we formally define graphon Hawkes processes, i.e., spatiotemporal self-exciting point processes on an uncountable spatial set, with a connectivity structure defined through a graphon $W$. 
We provide two constructions: one through a \emph{conditional intensity density}, and one using a \textit{cluster process representation}, i.e., by describing the dynamics as a spatial Poisson branching process. 
The conditional intensity density representation allows for a form of nonlinearity, whereas the cluster representation does not. 
In the linear case the two representations are equivalent.
In Section~\ref{section existence etc}, we mainly use the conditional intensity density representation; in Section~\ref{section6 fixed point theorems}, we mainly use the cluster representation; in Sections~\ref{section convergence results} and~\ref{large time behavior section}, we use both.

We start by defining a spatiotemporal point process on $[0,\infty)\times\mathscr X$ through a conditional intensity density, for which we need a formal definition. 
Let $L^p_{\mathrm{loc}}(\mathscr X)$ be the set of measurable functions $f$ on a $\sigma$-finite measure space $(\mathscr X,\mathscr A,\mu)$ such that $\int_Af\ \mathrm d\mu<\infty$ for all $A\in\mathscr A$ of finite measure.

\begin{definition}\label{conditional intensity density}
Let $(\mathscr X,\mathscr A,\mu)$ be a $\sigma$-finite measure space. 
Let $(N_t)_{t\geq0}$ be a spatiotemporal point process, where to each event at time $t$, a spatial coordinate $x\in\mathscr X$ is attached. 
Let $(\mathcal F_t)_{t\geq0}$ be the natural filtration generated by $N$, i.e., $\mathcal F_t:=\sigma(N_s(A):s\leq t, A\in\mathscr A)$. 
Then any $(\mathcal F_t)_{t\geq0}$-predictable process $(t,x)\mapsto\lambda_t(x)$ is called a \emph{conditional intensity density}, if: $\lambda_t(\cdot)\in L^1_{\mathrm{loc}}$ for all $t\geq0$; and for all $A\in\mathscr A$ with $\mu(A)<\infty$, $N$ satisfies, as $\Delta t\downarrow0$,
\begin{align*}
\mathbb P(N_{t+\Delta t}(A)-N_t(A)=0|\mathcal F_t)&=1-\int_A\lambda_t(x)\ \mathrm d\mu(x)\Delta t+o(\Delta t);\\
\mathbb P(N_{t+\Delta t}(A)-N_t(A)=1|\mathcal F_t)&=\int_A\lambda_t(x)\ \mathrm d\mu(x)\Delta t+o(\Delta t).
\end{align*}
\end{definition}
Note that the probabilistic structure of $N$ is determined uniquely by the conditional intensity density $\lambda_t(x)$. 
This can be seen by treating spatial coordinates as marks, and applying \cite{DaleyVereJones}, Proposition~7.3.IV. 
Also, it follows from \cite{DaleyVereJones}, Definition~7.3.II, that any \emph{regular} spatiotemporal point process (see \cite{DaleyVereJones}, Proposition~7.3.I) admits a conditional intensity density. 
Furthermore, note that when there is an event in a finite measure set $A\in\mathscr A$ at time $t$, the conditional intensity density can be rescaled to a probability density function determining the spatial coordinate: the spatial coordinate $x$ follows the law $\mathbb P(x\in\mathrm dx)=\lambda_t(\mathrm dx)/\lambda_t(A)$.

Before we define a graphon Hawkes process through its conditional intensity representation, we need some regularity assumptions. 

\begin{assumption}\label{ass1}
Let $(\mathscr X,\mathscr A=\mathcal B(\mathscr X),\mu)$ be a topological $\sigma$-finite measure space. 
Let $\lambda_\infty\in L^1_{\mathrm{loc}}(\mathscr X)$, $h\in L^1(\mathbb R_+)$, let $W:\mathscr X^2\to\mathbb R_+$ be a measurable (di)graphon, i.e., a (possibly non-symmetric) measurable function $(x,y)\mapsto W(x,y)$, and let $(x,y)\mapsto B_{xy}$ be a stochastic process on $\mathscr X^2$ that is separable w.r.t.\ the class $\mathcal U$ of open subsets of $\mathscr X^2$.  
Also assume that there are $p,q\in[1,\infty]$ with $p^{-1}+q^{-1}=1$ such that $W(\cdot,y)\in L^p_+(\mathscr X)$ and $\mathbb E[B_{\cdot y}]\in L^q_+(\mathscr X)$ a.s., for $\mu$-a.e.\ $y\in\mathscr X$.
\end{assumption}
The definition of \emph{separability} of a stochastic process can be found in \cite{Neveu}, \S III.4.

\begin{definition}[Conditional intensity density representation]\label{def graphon Hawkes cond dens}
Grant Assumption~\ref{ass1}. 
Furthermore, assume that we are given some history $\mathcal F_0$ on $(-\infty,0]$ of a marked spatiotemporal point process $N$ with $\mathscr X$-valued random locations. 
Let $\mathscr X\times[0,\infty)\to[0,\infty):(x,y)\mapsto f_x(y)$ be a measurable function, 
and assume that $f_x(\cdot)$ is $c_x$-Lipschitz, for each $x\in\mathscr X$.
When $N$ is generated on $\mathbb R_+$ by the conditional intensity density
\begin{equation}\label{eq cond int dens def}
\lambda_t(x)=f_x\left(\lambda_\infty(x)+\sum_{\substack{(s,y,B_{xy})\in N\\s<t}}B_{xy}W(x,y)h(t-s)\right),
\end{equation}
we say that $N$ is a \emph{graphon Hawkes process} on $\mathbb R_+\times \mathscr X$.
\end{definition}

We illustrate the dynamics of the graphon Hawkes process in Figure~\ref{3DfidCID}, where a realization of the conditional intensity density for a linear, unmarked process on $\mathscr X=[0,1]$ is shown. 
Here, we use a graphon $W$ with values $W(x,y)$ depending negatively on the distance between $x$ and $y$ on the unit circle. 

In this paper, we mainly focus on \emph{linear} processes for which $f_x$ is the identity mapping, for all $x\in\mathscr X$.
We assume throughout that we work in this setting, unless explicitly stated otherwise.

\begin{remark}\label{remark multivariate hawkes}
When we take $\mathscr X$ compact, e.g., $\mathscr X=[0,1]$, and parameters $\lambda_\infty,B,W$ that are piecewise constant on the elements of some partition $\mathcal P^d$ of $[0,1]$ into $d$ disjoint measurable sets $[0,1]=\bigcup_{i=1}^d\mathscr X_i$, then the locations within the sets $\mathscr X_i$ are irrelevant, and the process is essentially a $d$-variate Hawkes process. 
We employ this idea in Section~\ref{section convergence results}.

Similarly, when $\mathscr X$ is unbounded, $\sigma$-finite and partitioned into finite measure sets $(\mathscr X_k)_{k\in\mathbb N}$, then a process with parameters constant on each $\mathscr X_k$ is essentially a Hawkes process on a countable network; cf.\ \cite{Delattre16}.
\end{remark}

\begin{figure}
\centering
   \includegraphics[width=1\linewidth]{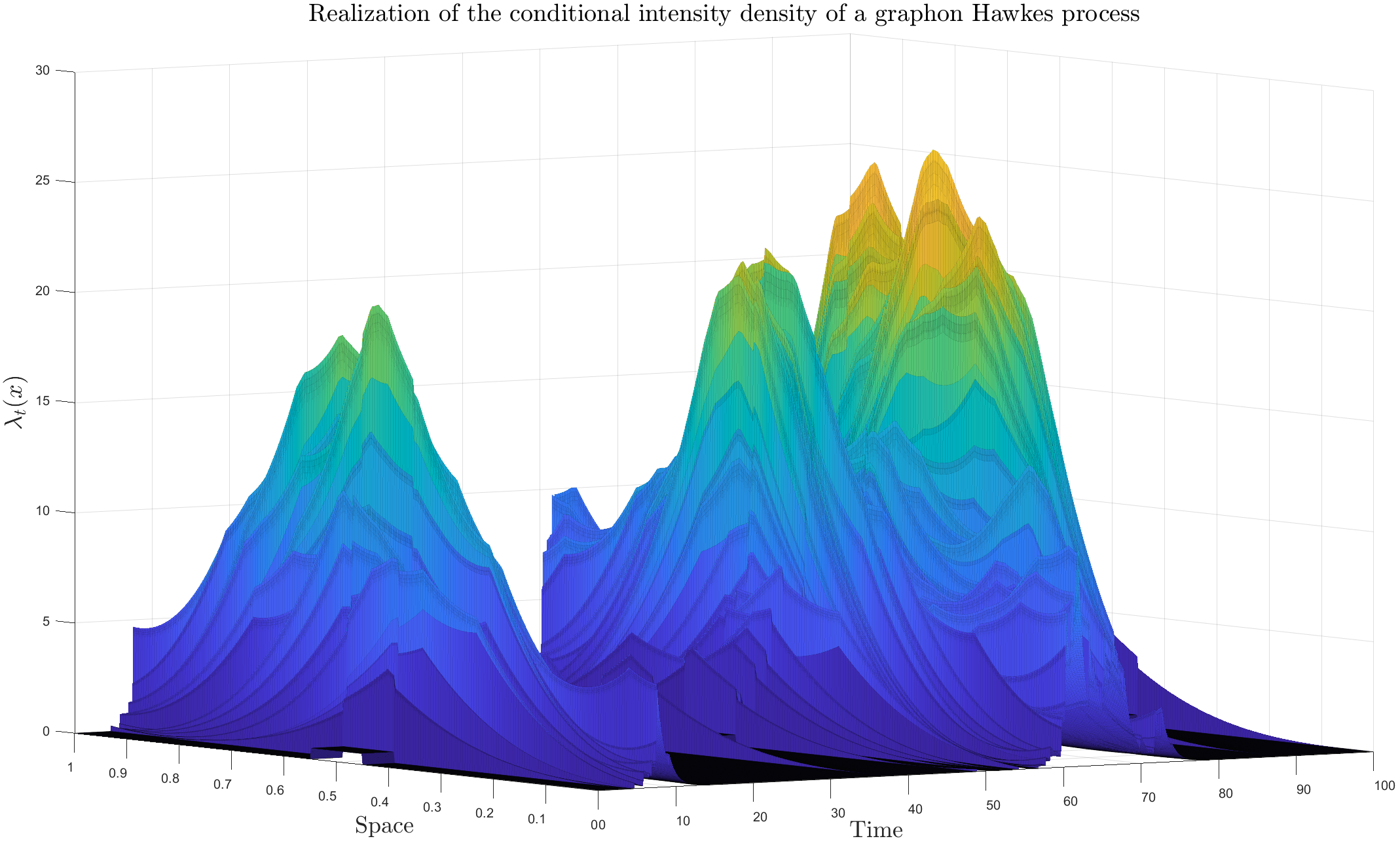}
   \caption{A realization of $\lambda_t(x)$ for a linear, unmarked model on $\mathscr X=[0,1]\ni x$ and $[0,100]\ni t$. 
   We use $\lambda_\infty(x)=\mathbf1\{x\in[.45,.55]\}$ 
   and a graphon $W(x,y)=30(1/2 - d(x,y))^3$, where $d(x,y)=\min\{|x-y|,x+1-y,y+1-x\}$ describes the distance on the unit circle.} 
   \label{3DfidCID}
\end{figure}

As is the case for multivariate Hawkes processes, a linear conditional intensity density specification allows us to give a definition of the graphon Hawkes process in terms of a Poisson branching process. 
In the linear case, events $(t,x)$ arrive over time and space at rate
\begin{equation}
\lambda_t(x)=\lambda_\infty(x)+\sum_{\substack{(s,y,B_{xy})\in N\\s<t}}B_{xy}(s)W(x,y)h(t-s).\label{linearcondintsection2}
\end{equation} 
With this linear rate specification, one can say that there are two types of events: those generated by the \emph{baseline intensity} $\lambda_\infty$, which are coined \emph{immigrant events}, and those generated by the second term, which consists of intensity increases caused by previous events; therefore, those events are coined \emph{offspring events}. 
Note that the immigrant events create clusters that are i.i.d., modulo the time shift corresponding to the immigrant arrival times. 
Furthermore, each event creates offspring according to the same iterative procedure, with different clusters being independent of each other, implying that there is a branching structure, exhibiting self-similarity. 
This structure is made explicit by the following cluster representation, which is fully exploited in Section~\ref{section6 fixed point theorems}, when we derive fixed-point equations in the transform domain.

\begin{definition}[Cluster representation]\label{clusterdef}
Grant Assumption~\ref{ass1}.
Consider the time interval $[0,T]$, where $T\in[0,\infty]$. 
We generate the events of the graphon Hawkes process according to the following procedure.
\begin{enumerate}[label=(\roman*)]
\item Generate immigrant arrival times according to a time-homogeneous Poisson process $I(\cdot)$ on $[0,T]\times\mathscr X$ with intensity $\lambda_\infty(x)$, and write $(T^{(0)}_1,x_1^{(0)}),\ldots,(T^{(0)}_{I(t)},x^{(0)}_{I(t)})$ for the pairs of event times and locations strictly before time $t\in[0,T]$. 
Sample the corresponding random mark functions $B_{\cdot x_i^{(0)}}(T_i^{(0)})$ according to the law of $B$. 
\item Set $n=0$. 
For each arrival with characteristics $(T_i^{(n)},x_i^{(n)},B_{\cdot x_i^{(n)}}(T_i^{(n)}))$, sample next-generation offspring events $(T^{(n+1)}_1,x_1^{(n+1)}),\ldots,(T^{(n+1)}_{K_i^{(n)}(t)},x_{K_i^{(n)}(t)}^{(n+1)})$ strictly before time $t$ according to a time-inhomogeneous Poisson process $K_i^{(n)}$ on $[0,T]\times\mathscr X$ having intensity at time $t\geq T_i^{(n)}$ equal to $B_{x x_i^{(n)}}(T_i^{(n)})W(x,x_i^{(n)})h(t-T_i^{(n)}).$  
Sample the corresponding random mark functions $B_{\cdot x_i^{(n+1)}}(T_i^{(n+1)})$ according to the law of $B$. 
\item Iterate for $n\in\mathbb N$, 
obtaining the event sequence $$E_n(t):=\left\{\left(T_i^{(n)},x_i^{(n)},B_{\cdot x_i^{(n)}}\left(T_i^{(n)}\right)\right):T_i^{(n)}\leq t,\ n\in\mathbb N_0\right\}.$$
\end{enumerate}
Then the process $N(\cdot)$ given by $N(t)=\bigcup_{n\in\mathbb N_0}E_n(t)$ constitutes a graphon Hawkes process. 
\end{definition}

As for multivariate Hawkes processes, with linear models one can decompose the conditional intensity density into a baseline intensity, first-generation offspring intensity, and so on, implying that particles arrive at the same rates according to Definitions~\ref{def graphon Hawkes cond dens} and~\ref{clusterdef}, given that their spatial coordinates coincide. 
It is then intuitively clear that the intensity-based and the cluster representation-based definitions are equivalent for linear processes on compact $\mathscr X$ starting on an empty history, though we need to check that the different procedures used to sample the spatial coordinates are equivalent. 
This is the content of the next elementary lemma. 
Part (i), corresponding to the cluster representation definition, describes a sampling method where the intensity is decomposed, whereas part (ii), corresponding to the conditional intensity density definition, does not use such a decomposition. 
The proof of Lemma~\ref{sampling lemma} can be found in Appendix~\hyperref[app A]{A}.
\begin{lemma}\label{sampling lemma}
Grant Assumption~\ref{ass1}.
Suppose that we sample spatial coordinates using an intensity function $\lambda(x)=\lambda^1(x)+\lambda^2(x)$. 
Then the following two procedures are equivalent.
\begin{enumerate}[label=(\roman*)]
\item First, sample whether $\lambda^i$ caused the event, where $\lambda^i$ gets probability $\frac{\|\lambda^i(\cdot)\|_{L^1(\mathscr X)}}{\|\lambda^1(\cdot)+\lambda^2(\cdot)\|_{L^1(\mathscr X)}}$. 
Then, sample the location according to the law having Lebesgue density $\frac{\lambda^i(\cdot)}{\|\lambda^i(\cdot)\|_{L^1(\mathscr X)}}$.
\item Sample the location according to the law having Lebesgue density $\frac{\lambda(\cdot)}{\|\lambda(\cdot)\|_{L^1(\mathscr X)}}$.
\end{enumerate}
\end{lemma}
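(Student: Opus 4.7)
The claim is essentially the statement that a two-component mixture distribution, weighted by the total masses of its components, coincides with the distribution obtained by normalizing the sum of the component densities. The plan is to prove the equality of the two laws by checking that they assign the same probability to an arbitrary measurable $A\subset\mathscr X$.

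First, I would assume without loss of generality that $\|\lambda^1\|_{L^1(\mathscr X)}$ and $\|\lambda^2\|_{L^1(\mathscr X)}$ are finite and strictly positive (the degenerate cases where one of the components is null or both vanish reduce trivially to a single density or are vacuous, and the infinite cases cannot arise because we will invoke the lemma with $\lambda=\lambda_t(\cdot)\in L^1_{\mathrm{loc}}$ restricted to a set of finite measure, making both components integrable). Under procedure (i), denote by $C\in\{1,2\}$ the (latent) component index. By the law of total probability, for any $A\in\mathscr A$,
\begin{equation*}
\mathbb P^{(i)}(x\in A)=\sum_{i=1}^{2}\mathbb P(C=i)\,\mathbb P^{(i)}(x\in A\,|\,C=i)=\sum_{i=1}^{2}\frac{\|\lambda^i\|_{L^1(\mathscr X)}}{\|\lambda^1+\lambda^2\|_{L^1(\mathscr X)}}\cdot\frac{\int_A\lambda^i\,\mathrm d\mu}{\|\lambda^i\|_{L^1(\mathscr X)}}.
\end{equation*}
The $\|\lambda^i\|_{L^1(\mathscr X)}$ factors cancel inside each summand, and linearity of the integral collapses the sum. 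Meanwhile, under procedure (ii),
\begin{equation*}
\mathbb P^{(ii)}(x\in A)=\frac{\int_A(\lambda^1+\lambda^2)\,\mathrm d\mu}{\|\lambda^1+\lambda^2\|_{L^1(\mathscr X)}}.
\end{equation*}

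The only technical point worth flagging is that $\|\lambda^1+\lambda^2\|_{L^1(\mathscr X)}=\|\lambda^1\|_{L^1(\mathscr X)}+\|\lambda^2\|_{L^1(\mathscr X)}$, which relies on the non-negativity of $\lambda^1$ and $\lambda^2$; this is guaranteed because both arise as conditional intensity densities. With this identity the numerators and denominators of $\mathbb P^{(i)}$ and $\mathbb P^{(ii)}$ agree, establishing that the two sampling procedures induce the same law on $\mathscr X$. There is no substantive obstacle here: once the conditioning is unwound, the result is a one-line manipulation. The purpose of stating it as a lemma is merely to make the equivalence of the cluster-representation sampling (Definition~\ref{clusterdef}) and the conditional-intensity-density sampling (Definition~\ref{def graphon Hawkes cond dens}) explicit, and the above two displays are all that is required.
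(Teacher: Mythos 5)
Your proof is correct and follows essentially the same route as the paper's: apply the law of total probability over the latent component, cancel the $\|\lambda^i\|_{L^1(\mathscr X)}$ factors, and collapse the sum by linearity. The only cosmetic difference is that you verify equality of measures by testing on arbitrary $A\in\mathscr A$ while the paper writes the identity directly for the infinitesimal density $\mathbb P(X\in\mathrm dx)$; your flagging of the nonnegativity needed for $\|\lambda^1+\lambda^2\|_{L^1}=\|\lambda^1\|_{L^1}+\|\lambda^2\|_{L^1}$ and the degenerate/finiteness cases is a minor but sound addition that the paper leaves implicit.
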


For most results in this paper (except for the existence, uniqueness and stability results of Section~\ref{section existence etc}), we assume compactness of our space $\mathscr X$ 
and uniform boundedness of the parameters of our model, as follows.

\begin{assumption}\label{ass cw cb}
It holds that $W(x,y)$, $\mathbb E[B_{xy}]$ are bounded by constants $C_W,C_B\geq0$, respectively, uniformly over $x,y\in\mathscr X$. 
Furthermore, the Lipschitz constants $c_x$ are bounded by $C_{\mathrm{Lip}}$, uniformly over $x\in\mathscr X$, 
and $\lambda_\infty(\cdot)\in L^\infty_{\mathrm{loc}}(\mathscr X)$.
\end{assumption}

\section{Existence, uniqueness and stability}\label{section existence etc}
In this section, we consider the general nonlinear model and the corresponding conditional intensity density representation given by Definition~\ref{def graphon Hawkes cond dens}, 
where $(\mathscr X,\mathcal B(\mathscr X),\mu)$ is assumed to be some (topological) $\sigma$-finite measure space. 
We still need to prove that this is a `good' definition, in the sense that there exists a unique process $N$ satisfying the dynamics given by \eqref{eq cond int dens def}, and that such a solution is stable in distribution, meaning that there exists a unique stationary distribution, to which any `sufficiently regular' solution converges weakly, as $t\to\infty$. 
We consider the model under relatively weak stability conditions, reminiscent of those given in \cite{Stabilitypaper}, Theorem~7, for multivariate Hawkes processes. 
The proofs of this section leverage methodology from \cite{Massoulie} and can be found in Appendix~\hyperref[app A]{A}.
In Appendix~\hyperref[app A]{A}, we also show how to apply \cite{Massoulie}, Theorem~2, more directly under 
the stronger stability conditions considered there. 
When one considers the case of compact, Euclidean $\mathscr X$, e.g., $\mathscr X=[0,1]$, the proofs and methodology become somewhat simpler. 

In this section, we assume that the spatial coordinate takes values in some $\sigma$-finite measure space $(\mathscr X,\mathscr A,\mu)$, meaning that there exists a countable, measurable partition $\mathscr X=\bigsqcup_{i\in\mathbb N}\mathscr X_i$ into sets of finite measure, i.e., $\mu(\mathscr X_i)<\infty$, for all $i\in\mathbb N$. 
We refer to an element $\mathscr X_i$ of this partition as a \emph{site}.
Of course, $\mathbb N$ may be replaced by any other countable set, e.g., $\mathbb Z^m$. 
Note that when $\mathscr X=[0,1]$, there is no need to introduce multiple sites.

In the finite-dimensional case, existence, uniqueness and stability of Hawkes processes is related to the spectral radius of a matrix consisting of magnitudes of excitation, see \cite{Stabilitypaper}, Theorem~7. 
In the current infinite-dimensional setting, we are interested in the linear transformation
\begin{equation}\label{operator Thom marked general X}
T_{\mathrm{hom}}:L^1(\mathscr X,\mu)\to L^1(\mathscr X,\mu):f(\cdot)\mapsto\|h\|_{L^1(\mathbb R_+)}c_{\cdot}\int_{\mathscr X}\mathbb E[B_{\cdot y}]W(\cdot,y)f(y)\ \mathrm d\mu(y),
\end{equation}
which, heuristically, maps `a particle in $y$', i.e., the Dirac delta function $\delta_y$, into its expected (first-generation) offspring size density throughout the space $\mathscr X$. 
Recall that $c_x$ is the Lipschitz constant of $f_x$; see Definition~\ref{def graphon Hawkes cond dens}.
In the linear case where $f_x\equiv1$ and $c_x\equiv1$, the operator $T_{\mathrm{hom}}$ corresponds to the homogeneous part of the conditional intensity density, whence the subscript.
We work in the regime where $\rho(T_{\mathrm{hom}})$, the spectral radius of $T_{\mathrm{hom}}$, is smaller than $1$. 
Here, 
\begin{align*}\rho(T_{\mathrm{hom}})&:=\sup\{|\alpha|:\alpha\in\sigma(T_{\mathrm{hom}})\},\\
\sigma(T_{\mathrm{hom}})&:=\{\alpha\in \mathbb C:T_{\mathrm{hom}}-\alpha I\in B(L^1(\mathscr X,\mu))\text{ is not invertible}\},
\end{align*} 
with $I:L^1(\mathscr X,\mu)\to L^1(\mathscr X,\mu)$ the identity mapping, and with $B(\mathscr Z)$ the set of bounded linear operators on a Banach space $\mathscr Z$. 
The spectral radius is related to the operator norm through Gelfand's formula, see \cite{Conway}, Proposition\ VII.3.8:
\begin{equation}\label{gelfands formula}\rho(T_{\mathrm{hom}})=\lim_{n\to\infty}\|T_{\mathrm{hom}}^n\|^{1/n}=\inf_{n\in\mathbb N}\|T_{\mathrm{hom}}^n\|^{1/n}.
\end{equation}


We now describe a two-step construction of (possibly nonlinear) graphon Hawkes processes. 
In the first step, for each $i\in\mathbb N$, we define a Poisson random measure $\bar N^i$ corresponding to site $\mathscr X_i$, which is used to construct events in $\mathscr X_i$. 
Next, for an arrival in $\mathscr X_i$, we use uniform marks to set up an acceptance-rejection method for determining the spatial coordinate $x\in \mathscr X_i$. 
To this end, for $i\in\mathbb N$, let $(L^i,\mathcal L^i,\mathcal Q^i)$ be a factor space of two spaces $(L^i_j,\mathcal L^i_j,\mathcal Q^i_j)$, $j=1,2$, the first space modeling the mark stochastic processes $B_{xy}^i$ $(x\in\mathscr X,y\in\mathscr X_i)$, and the second one supporting i.i.d.\ sequences of couples of independent $\mathrm{Uni}(\mathscr X_i,\mu_i)$ and $\mathrm{Uni}(0,1)$ marks; denote such a sequence by $U^i=(U^i_{1,k},U^i_{2,k})_{k\in\mathbb N}$. 
Here, the $\mathrm{Uni}(\mathscr X_i,\mu_i)$-distribution is defined by $\mathbb P(U^i_{1,k}\in\mathrm dx)=\mu(\mathrm dx)/\mu(\mathscr X_i)$. 
For later use, for $j=1,2$, let $(L_j,\mathcal L_j,\mathcal Q_j)$ be the product space $\prod_{i\in\mathbb N}(L^i_j,\mathcal L^i_j,\mathcal Q^i_j)$, and let $(L,\mathcal L,\mathcal Q)$ be the product of $(L_1,\mathcal L_1,\mathcal Q_1)$ and $(L_2,\mathcal L_2,\mathcal Q_2)$.

Our acceptance-rejection method is as follows. 
When there is an arrival at time $t$ in site $\mathscr X_i$, we have $\lambda_t(x)\in L^\infty(\mathscr X_i)$ a.s., since $T_{\mathrm{hom}}$ is a bounded operator under Assumption~\ref{ass1} (use Hölder's inequality).
In step $k$, $U^i_{1,k}$ is used as a proposal. 
We accept the proposal with probability proportional to $\lambda_t(U^i_{1,k})$. 
This means that we accept the proposal if and only if $$\frac{\lambda_t(U^i_{1,k})}{\|\lambda_t(\cdot)\|_{L^\infty(\mathscr X_i)}}\geq U^i_{2,k}.$$ 
If this condition is not satisfied, we reject, and try again for $k+1$. 
Now, given an arrival in site $\mathscr X_i$ at time $t$, the location within this site is determined by the realization of $U^i$, since $\lambda_t(\cdot)$ is known. 
We refer to this location as $X_t^i(U_t)$, with the understanding that $U_t$ is the realization of $U^i $ at time $t$.

\begin{remark}\label{Remark uni simulation}
In case $\mathscr X=[0,1]$ and $\mu$ is the Lebesgue measure on $[0,1]$, we need only a single $U\sim\mathrm{Uni}(0,1)$ random variable. 
Indeed, given an event at time $t$, the location can be determined by $$\mathbb P(X_t\in A)=\frac{\int_A\lambda_t(x)\ \mathrm dx}{\|\lambda_t(\cdot)\|_{L^1[0,1]}},$$ for $A\in\mathcal B[0,1]$, meaning that the location is given by 
\begin{equation}
X_t(U)=\inf\left\{z\in[0,1]:\frac{\int_0^z\lambda_t(x)\ \mathrm dx}{\int_0^1\lambda_t(x)\ \mathrm dx}\geq U\right\}. \label{locationuniforms}
\end{equation}
\end{remark}

We are now equipped to define the graphon Hawkes process through the Poisson random measures $\bar N^i$ on $\mathbb R\times L^i\times\mathbb R_+$ having intensity $\mathrm dt\times\mathcal Q^i(\mathrm dz)\times\mathrm ds$. 
Let $S_tN_-$ be the history of $N$ at time $t$; here, $N_-$ denotes the history at time $0$, and $S_t$ is the left-shift operator w.r.t.\ the first variable. 
Let $N^i(\mathrm dt\times\mathrm dz)$ be the process counting events in site $i$, with $z\in L$. 
Given an initial condition $S_0N_-$, we assume the following dynamics on $\mathbb R_+$:
\begin{align}
N^i(\mathrm dt\times\mathrm dz)&=\bar N^i(\mathrm dt\times\mathrm dz\times[0,\psi(S_tN_-,i)]);\nonumber\\
\psi(S_tN_-,i)&=\Lambda_t^i=\int_{\mathscr X_i}\lambda_t(x)\ \mathrm d\mu(x)=\|\lambda_t(\cdot)\|_{L^1(\mathscr X_i,\mu)};\label{N wrt PRM}\\
\lambda_t(x)&=f_x\left(\lambda_{\infty}(x)+\sum_{j\in\mathbb N}\sum_{\substack{(s,B_{xX_s^j(U_s)},U_s)\in N^j\\s<t}}B_{xX_s^j(U_s)}W(x,X_s^j(U_s))h(t-s)\right).\nonumber
\end{align}

For the definition of strongly regular point processes, which we use in the next result, we refer to \cite{Massoulie}, Definition~2. 

\begin{theorem}\label{thm stability general X spectral radius}
Grant Assumption~\ref{ass1}. 
Consider a graphon Hawkes process $N$ on a $\sigma$-finite measure space $(\mathscr X,\mathscr A,\mu)$ having (possibly nonlinear) conditional intensity density specification 
\begin{equation}
\lambda_t(x)=f_x\left(\lambda_\infty(x)+\int_{(-\infty,t)}\int_{\mathscr X}B_{xy}(s)W(x,y)h(t-s)\ \mathrm dN_s(y)\ \mathrm d\sigma(s)\right),
\end{equation} where $\sigma$ denotes the counting measure on $\mathscr X$.
Assume that we have a partition of $\mathscr X$ into finite, non-null measure sets $\bigsqcup_{i\in\mathbb N}\mathscr X_i$ that is such that the following conditions hold:
\begin{align}
C:=\sup_{t\geq0,i\in\mathbb N}\|f_\cdot(\lambda_\infty(\cdot)+\eta(t,\cdot))\|_{L^1(\mathscr X_i,\mu)}&<\infty\label{Cnonlinear};\\
\rho(T_{\mathrm{hom}})&<1;\label{rho specrad}
\end{align}
where 
\begin{equation}\label{def eta stability}
\eta(t,x):=\sum_{\substack{(s,y)\in N\\s<0}}B_{xy}W(x,y)h(t-s).
\end{equation} 
Then there exists a unique strongly regular solution $N$ to \eqref{N wrt PRM} such that $$\sup_{t\geq0,i\in\mathbb N}\mathbb E\psi(S_tN_-,i)<\infty.$$
\end{theorem}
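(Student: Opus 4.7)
The plan is to adapt the Picard-iteration scheme of \cite{Massoulie}, Theorem~2, to the present infinite-dimensional, $\sigma$-finite setting, replacing the matrix contraction estimates available in the finite-dimensional case by operator-norm estimates for $T_{\mathrm{hom}}$ on $L^1(\mathscr X,\mu)$, controlled through the Gelfand formula \eqref{gelfands formula} under the hypothesis $\rho(T_{\mathrm{hom}})<1$.

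First, construct a candidate solution by Picard iteration driven by the common Poisson random measures $\bar N^i$. Set $N^{(0)}$ equal to the history $S_0 N_-$ on $(-\infty,0]$ and to the empty process on $(0,\infty)$, and define inductively
\begin{equation*}
N^{(n+1),i}(\mathrm dt\times \mathrm dz) = \bar N^i\bigl(\mathrm dt\times \mathrm dz\times [0,\psi(S_t N^{(n)}_-,i)]\bigr),
\end{equation*}
with spatial marks selected by the acceptance-rejection procedure described above (whose consistency is Lemma~\ref{sampling lemma}). Using Assumption~\ref{ass1}, H\"older's inequality guarantees that each $\lambda_t^{(n)}(\cdot)$ is predictable, lies in $L^1_{\mathrm{loc}}(\mathscr X)\cap L^\infty(\mathscr X_i)$ almost surely, and is dominated by a quantity whose $L^1(\mathscr X_i,\mu)$-norm is bounded uniformly in $t$ by \eqref{Cnonlinear} at the initial step.

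Second, establish a contraction estimate between consecutive iterates. Writing $\Delta^{(n)}_t(x):=\mathbb E|\lambda^{(n+1)}_t(x)-\lambda^{(n)}_t(x)|$ and exploiting the $c_x$-Lipschitz property of $f_x$ together with the independence of disjoint pieces of $\bar N^i$, one obtains
\begin{equation*}
\Delta^{(n)}_t(x) \leq c_x \int_0^t\!\int_{\mathscr X} \mathbb E[B_{xy}]\,W(x,y)\,h(t-s)\,\Delta^{(n-1)}_s(y)\,\mathrm d\mu(y)\,\mathrm ds.
\end{equation*}
Setting $u^{(n)}(x):=\sup_{t\geq 0}\Delta^{(n)}_t(x)$ and taking suprema in $t$ (using $h\in L^1(\mathbb R_+)$) yields the functional inequality $u^{(n)}\leq T_{\mathrm{hom}}\,u^{(n-1)}$, hence $u^{(n)}\leq T_{\mathrm{hom}}^{\,n}u^{(0)}$ in $L^1(\mathscr X,\mu)$. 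By \eqref{rho specrad} and \eqref{gelfands formula}, $\sum_{n\geq 0}\|T_{\mathrm{hom}}^{\,n}\|<\infty$, so $\sum_n u^{(n)}$ converges in $L^1(\mathscr X_i,\mu)$ for every $i$. This telescoping sum implies that $(\lambda^{(n)})_n$ is Cauchy in $L^1(\Omega\times[0,T]\times\mathscr X_i)$ for each finite $T$, yielding a predictable, $L^1_{\mathrm{loc}}$ limit $\lambda_t(x)$; the point process $N$ defined through \eqref{N wrt PRM} with this intensity is strongly regular and solves the equation. Uniqueness follows by running the same estimate on the difference of two hypothetical solutions, obtaining an $L^1(\mathscr X_i,\mu)$ bound of the form $T_{\mathrm{hom}}^{\,n}(\cdot)$ for every $n\in\mathbb N$, which vanishes as $n\to\infty$.

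Finally, for the uniform moment bound, applying the Lipschitz property of $f_x$ and condition \eqref{Cnonlinear} to \eqref{N wrt PRM} gives
\begin{equation*}
\mathbb E\lambda_t(x) \leq G(x) + c_x\int_0^t\!\int_{\mathscr X} \mathbb E[B_{xy}]\,W(x,y)\,h(t-s)\,\mathbb E\lambda_s(y)\,\mathrm d\mu(y)\,\mathrm ds,
\end{equation*}
where $\sup_{i}\|G\|_{L^1(\mathscr X_i,\mu)}<\infty$ by \eqref{Cnonlinear}. Taking the supremum in $t$ produces $u(x):=\sup_t\mathbb E\lambda_t(x) \leq G(x)+T_{\mathrm{hom}}\,u(x)$; iterating and invoking $\rho(T_{\mathrm{hom}})<1$ gives $u\leq\sum_{n\geq 0}T_{\mathrm{hom}}^{\,n}G$, whose $L^1(\mathscr X_i,\mu)$-norm is bounded uniformly in $i$. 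Integrating $u$ over $\mathscr X_i$ delivers $\sup_{t\geq 0,\,i\in\mathbb N}\mathbb E\psi(S_tN_-,i)<\infty$. The main obstacle is that the infinite-dimensional state space rules out the Perron--Frobenius shortcut available in the finite-dimensional case; one must instead pass through Gelfand's formula, which forces the entire contraction argument to be phrased in terms of the $L^1$-operator norms of the iterates $T_{\mathrm{hom}}^{\,n}$, and one must carefully verify that \eqref{Cnonlinear} furnishes the \emph{uniform} baseline needed to close the renewal inequality for $\sup_t\mathbb E\lambda_t$ rather than only a pointwise bound.
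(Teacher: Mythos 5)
Your existence step follows the paper's Picard scheme closely: both iterate via the common Poisson random measures, derive $u^{(n)}\leq T_{\mathrm{hom}}^{\,n}u^{(0)}$, and invoke Gelfand's formula under \eqref{rho specrad}, so that part is essentially the same route (the paper works with $\sup_{t,i}\mathbb E\int_{\mathscr X_i}|\lambda^{n+1}-\lambda^n|\,\mathrm d\mu$ and then extracts an almost-sure Cauchy sequence via Markov and Borel--Cantelli, but the contraction mechanism is identical). The genuine gap lies in uniqueness. You assert that uniqueness follows ``by running the same estimate on the difference of two hypothetical solutions, obtaining an $L^1(\mathscr X_i,\mu)$ bound of the form $T_{\mathrm{hom}}^{\,n}(\cdot)$ which vanishes.'' On a $\sigma$-finite $\mathscr X$ of possibly infinite measure, setting $u(x):=\sup_t\mathbb E|\lambda_t(x)-\lambda'_t(x)|$ for two solutions does give the pointwise inequality $u\leq T_{\mathrm{hom}}^{\,n}u$ for every $n$, but the moment condition only supplies $\sup_i\|u\|_{L^1(\mathscr X_i,\mu)}<\infty$, not $u\in L^1(\mathscr X,\mu)$, so $\|T_{\mathrm{hom}}^{\,n}\|\to 0$ cannot be applied to conclude $u=0$. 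This is exactly where a weight is required. The paper does not take your shortcut: it explicitly constructs a strictly positive function $g$ on the mark space $L$ --- as a geometric series $g=\sum_{n\geq N}r^{-n}g_n$ built from iterates of the excitation kernel --- satisfying $\int_L g\,\mathrm d\mathcal Q<\infty$ together with the subinvariance inequality that plays the role of \cite{Massoulie}, Lemma~4. The summability $\int_L g\,\mathrm d\mathcal Q<\infty$ is precisely what controls the possible spread of the discrepancy across the infinitely many sites $\mathscr X_i$ and lets Massoulié's Theorem~2 uniqueness argument close; without such a weight the argument does not close on a general $\sigma$-finite $\mathscr X$.

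Separately, your derivation of the uniform moment bound is circular: the implication from $u\leq G+T_{\mathrm{hom}}u$ to $u\leq\sum_{n\geq 0}T_{\mathrm{hom}}^{\,n}G$ requires $T_{\mathrm{hom}}^{\,n}u\to 0$, which presupposes the very finiteness and integrability of $u=\sup_t\mathbb E\lambda_t$ that you are trying to establish. The paper instead obtains $\sup_{t,i}\mathbb E\int_{\mathscr X_i}\lambda(t,x)\,\mathrm d\mu(x)<\infty$ constructively, by telescoping the Picard increments $\sum_{n\geq 0}\mathbb E\int_{\mathscr X_i}|\lambda^{n+1}(t,x)-\lambda^n(t,x)|\,\mathrm d\mu(x)$ and using the convergence of $\sum_n\|T_{\mathrm{hom}}^{\,n}\|$ on that sum, which sidesteps the circularity.
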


\begin{remark}
In the linear case, \eqref{Cnonlinear} is equivalent to the following two conditions.
\begin{align}
\alpha:=\sup_{i\in\mathbb N}\int_{\mathscr X_i}\lambda_\infty(x)\ \mathrm d\mu(x)&<\infty;\label{alpha specrad}\\
\sup_{t>0,i\in\mathbb N}\int_{\mathscr X_i}\eta(t,x)\ \mathrm d\mu(x)&<\infty.\label{eta specrad}
\end{align}
\end{remark}

Besides the proof of Theorem~\ref{thm stability general X spectral radius}, we provide in Appendix~\hyperref[app A]{A} some results that more directly follow the work of \cite{Massoulie}. 
First, in Proposition~\ref{thstationarity mathscr X}, we give a stability result, which is \cite{Massoulie}, Theorem~4, specified to the current setting; there we also use \cite{Massoulie}, Remarks~3 and~4. 
Next, in Proposition~\ref{existence mathscr X}, we apply \cite{Massoulie}, Theorem~2, directly to prove existence and uniqueness under a stability condition 
related to the operator norm of $T_{\mathrm{hom}}$, which we calculate to be
\begin{equation}\label{expression operator norm thom}
\|T_{\mathrm{hom}}\|=\|h\|_{L^1(\mathbb R_+)}\sup_{y\in\mathscr X}\int_{\mathscr X}c_x\mathbb E [B_{xy}]W(x,y)\ \mathrm d\mu(x).
\end{equation}
\begin{remark}\label{rmk5}
When we work with a linear model on a finite measure space $\mathscr X$, the parameter $\rho$ from equation \eqref{rho mathscr X} is given by $\rho=\|T_{\mathrm{hom}}\|$. 
In this case, condition~\eqref{rho mathscr X} implies $\rho(T_{\mathrm{hom}})<1$ (cf.~\eqref{rho specrad}) by Gelfand's formula.
\end{remark}

\section{Convergence of mutually exciting processes to graphon Hawkes processes}\label{section convergence results}
In this section, we show that the \emph{linear} graphon Hawkes process on a compact, convex, Euclidean spatial set $\mathscr X\subset \mathbb R^m$, which is an infinite-dimensional model, occurs as the limit of suitably chosen multivariate Hawkes processes on $d$ coordinates, as $d\to\infty$. 
More specifically, we show that, under some regularity assumptions, partitioning $\mathscr X$ and averaging the parameters of the graphon Hawkes process over the sets of this partition yields a piecewise constant graphon Hawkes process --- i.e., a multivariate Hawkes process, see Remark~\ref{remark multivariate hawkes} --- that is close to the original graphon Hawkes process, for large $d$. 
This averaging procedure can be followed for infinite Euclidean spaces $\mathscr X=\mathbb R^m$ as well, by partitioning the space into hyperrectangles of uniformly bounded size. 
The proofs of the present section, however, do not carry over to this setting, due to the possibility that the excitation of the prelimit `comes from infinity', meaning that an event may be caused by excitation stemming from an event that is arbitrarily far away.  

We start by introducing some definitions and assumptions (Section~\ref{sec:defass}), the prelimit model $(\tilde N^d)_{d\in\mathbb N}$ (Section~\ref{section prelimit}), and establish some required technical results (Section~\ref{sec:technical lemmas}).
Then we consider the \emph{annealed} case (Section~\ref{sec:ann}), where the prelimit is a multivariate Hawkes process with a complete weighted connectivity graph. 
Next, we treat the \emph{quenched} case (Section~\ref{sec:que}), where we sample edges from this complete connectivity graph, yielding a Hawkes process on a finite network with an unweighted, directed connectivity graph as prelimit. 
Note that for such a sampling procedure, we need $W$ to map into $[0,1]$, which can be accomplished by a rescaling such as $\breve W(x,y)\breve B_{xy}=\left(W(x,y)/C_W\right)\left(C_WB_{xy}\right)$, using Assumption~\ref{ass cw cb}.

After providing definitions, the prelimit model and some technical preliminaries on $[0,T]$, we work from Section~\ref{sec:ann} onwards on the finite time interval $[0,1]$. 
By obvious modifications, everything caries over to bounded time intervals $[0,T]$.

\subsection{Definitions and assumptions}\label{sec:defass}

To prove convergence, we use the notion of \emph{uniform convergence on compacts in probability} (ucp convergence) and we first need to define a metric on the space of marked spatiotemporal point processes. 
Next, 
we provide some assumptions.

\begin{definition}\label{ucp def}
Let $\mathscr X\subset\mathbb R^m$ be a convex, $m$-dimensional manifold.
Let $(N^k)_{k\in\mathbb N}$ be a sequence of spatiotemporal point processes on $[0,T]\times\mathscr X$, marked by i.i.d.\ realizations of a stochastic process $B$ on $\mathscr X^2$ satisfying Assumption~\ref{ass1}.  
Furthermore, let $N$ be another such process. 
Write $\mathscr Z$ for the space of realizations of such processes. 
Let $\mathfrak d(\cdot,\cdot)$ be a metric on $\mathscr Z$. 
We say that $N^k$ converges uniformly on compact sets in probability to $N$, as $k\to\infty$, if for every nondegenerate compact, convex $m$-dimensional manifold $\mathscr A\subset \mathscr X$ and 
for every $\delta>0$, 
\begin{equation}\label{UCP convergence}
\mathbb P\left(\sup_{\substack{\mathscr A'\subset\mathscr A\\\mathscr A'\textnormal{ compact, convex}}}\mathfrak d\left(N^k_{\mathscr A'}, N_{\mathscr A'}\right)>\delta\right)\to0,\qquad\text{as }k\to\infty,
\end{equation}
in which case we write $N^k\stackrel{\mathrm{ucp}}\longrightarrow N$. 
Here, $N_{\mathscr Y}$ denotes $N$ restricted to the spatial set $\mathscr Y$.
\end{definition}

We aim to define a metric with the property that two point processes are `far apart' whenever there is an event of one process that does not coincide with an event of the other; and such that two simultaneous events are `close' if and only if their spatial coordinates and their marks are close. 
This motivates the following metric.

\begin{definition}\label{metric on spatiotemporal}
For some $T\geq0$, let $N,M$ be two spatiotemporal point processes on $[0,T]\times\mathscr X$, marked by i.i.d.\ realizations of a stochastic process $B$ on $\mathscr X^2$ satisfying Assumption \ref{ass1}. 
Write $\mathcal T_L:=\{t\in\mathbb R_+:\exists y,B\text{ s.t. }(t,y,B)\in L\}$ for the event times of $L=N,M$.
Then we define a metric on $\mathscr Z$ (see Definition~\ref{ucp def}) by
\begin{align}
\mathfrak d(N,M)&:=\sum_{\substack{t\in \mathcal T_N\cap\mathcal T_M\\(t,y^N,B^N)\in N\\(t,y^M,B^M)\in M}}\left(\|y^N-y^M\|_{\mathbb R^m}+\|B^N_{\cdot y^N}-B^M_{\cdot y^M}\|_{L^1(\mathscr X)}\right)\nonumber\\
&+\sum_{\substack{t\in\mathcal T_N\vartriangle\mathcal T_M\\(t,y,B)\in \mathcal T_N\cup\mathcal T_M}}\left(1+\|y\|_{\mathbb R^m}+\|B_{\cdot y}\|_{L^1(\mathscr X)}\right).\label{eq metric on spatiotemporal}
\end{align}
\end{definition}
\begin{remark}
The choice of the Euclidean norm on $\mathbb R^m$ is unimportant, since all norms on a finite-dimensional vector space are equivalent. 
Also, we could just as well work with the $L^p(\mathscr X)$-norm, $p\in(1,\infty)$, by stating an analog of Lemma~\ref{TVlemma4} below, using a Poincaré inequality in $L^p(\mathscr X)$ and assumptions on the $p$-variation of $B$. 
\end{remark}


\begin{assumption}\label{ass7}
Let $\mathscr X=[\boldsymbol a,\boldsymbol b]:=\prod_{i=1}^m[a_i,b_i]\subset\mathbb R^m$ be some nondegenerate hyperrectangle in $(\mathbb R^m,\mathcal B(\mathbb R^m),\mathrm{Leb}^m)$, where $\mathrm{Leb}^m$ is the Lebesgue measure on $(\mathbb R^m,\mathcal B(\mathbb R^m))$. 
Assume that we are given a sequence of partitions $(\mathcal P^d)_{d\in\mathbb N}$ of $\mathscr X$ into $d$ hyperrectangles $(\mathscr X_n^d)_{n\in[d]}$ with 
$\mathrm{mesh}(\mathcal P^d)\to0$ monotonically as $d\to\infty$, where $\mathrm{mesh}(\mathcal P^d):=\max_{n\in[d]}\mathrm{diam}(\mathscr X_n^d)$. 
\end{assumption}

Next, we need some regularity assumptions on the parameters of our graphon Hawkes process in terms of their \emph{total variation}; cf.\ \cite{Leoni}.

\begin{definition}\label{TVmultivariate}
Let $\Omega\subset\mathbb R^m$ be an open region, and let $f\in \mathcal L^1_{\mathrm{loc}}(\Omega)$. 
We define the total variation of $f$ in $\Omega$ by 
\begin{equation}\label{TVmultivariate eq}
\mathrm{Var}(f,\Omega):=\sup\left\{\sum_{i=1}^m\int_\Omega\frac{\partial\Phi_i}{\partial x_i}f\ \mathrm dx:\Phi\in C_c^1(\Omega,\mathbb R^m),\|\Phi\|_{L^\infty(\Omega)}\leq1\right\},
\end{equation}
where $C_c^1(\Omega,\mathbb R^m)$ is the set of continuously differentiable functions $\Phi:\Omega\to\mathbb R^m$. 
When working on equivalence classes, i.e., for $f\in L^1_{\mathrm{loc}}(\Omega)$, we set 
\begin{equation}\|f\|_{\mathrm{TV}(\Omega)}:=\mathrm{Var}(f,\Omega):=\inf_{\substack{g\in\mathcal L_{\mathrm{loc}}^1(\mathbb R^m)\\g=f\text{ a.e.}}}\mathrm{Var}(g,\Omega).\end{equation}
Write $\mathrm{BV}(\Omega)$ for the set of (equivalence classes of) integrable functions $f\in L^1_{\mathrm{loc}}(\Omega')$ defined on a subset $\Omega'\supset\Omega$ of $\mathbb R^m$ such that $f|_\Omega$ has finite total variation.
\end{definition}
 
\begin{remark}
For univariate functions $g:\mathbb R\to\mathbb R$, \eqref{TVmultivariate eq} is equivalent to 
\begin{equation}\label{def TV univariate}
 \mathrm{Var}(g,(a,b))=\sup\left\{\sum_{i=1}^n|g(x_{i+1})-g(x_i)|:n\in\mathbb N,a=x_0<x_1<\cdots<x_n=b\right\}.
\end{equation}
\end{remark}

\begin{assumption}\label{ass6}
Assume that $N$ is a linear graphon Hawks process defined on $\mathscr X=[\boldsymbol a,\boldsymbol b]$, satisfying Assumption~\ref{ass7}.
Suppose that $\|\lambda_\infty\|_{\mathrm{TV}(\mathscr X)}<\infty$.
Suppose that $B_{\cdot y}$ is of bounded variation a.s.\ for a.a.\ $y\in\mathscr X$, with $\sup_{y\in\mathscr X}\mathbb E\|B_{\cdot y}\|_{\mathrm{TV}(\mathscr X)}<\infty$. 
Furthermore, suppose that $W(\cdot, y)$ is of bounded variation for all $y\in\mathbb R$, with $\sup_{y\in \mathscr X}\|W(\cdot,y)\|_{\mathrm{TV}(\mathscr X)}<\infty$.
\end{assumption}

Finally, in Proposition~\ref{graphon + process convergence simultaneously} below, we require continuity of $W$, as follows. 
\begin{assumption}\label{ass2}
Suppose that $W:\mathscr X^2\to\mathbb R_+$ is a.e.\ continuous.  
\end{assumption}

\subsection{Prelimit model}\label{section prelimit}
We consider a linear graphon Hawkes process $N$, defined through its conditional intensity density \eqref{linearcondintsection2},
with corresponding integral operator 
\begin{equation}
T_{\mathrm{hom}}:L^1(\mathscr X)\to L^1(\mathscr X):f(\cdot)\mapsto\|h\|_{L^1(\mathbb R_+)}\int_{\mathscr X}\mathbb E[B_{\cdot y}]W(\cdot,y)f(y)\ \mathrm dy.
\end{equation}
We define the intended prelimit $(\tilde N^d)_{d\in\mathbb N}$ by specifying conditional intensity densities.  
Using the partition $\mathcal P^d$ from Assumption~\ref{ass7}, we partition the compact hyperrectangle $\mathscr X$ into further hyperrectangles $(\mathscr X_n^d)_{n\in[d]}$. 
We define a prelimit model $\tilde N^d$ as the linear model having conditional intensity density 
\begin{equation}
\tilde \lambda^d_t(x)=\tilde\lambda^d_\infty(x)+\sum_{\substack{(s,y,\tilde B^d_{xy})\in \tilde N^d\\s<t}}\tilde B^d_{xy}(s)\tilde W^d(x,y)h(t-s),\label{cond int dens of prelimit}
\end{equation}
where the parameters are obtained by averaging over the elements $\mathscr X_n^d$ of $\mathcal P^d$:
\begin{align}
\tilde\lambda_{\infty}^d(x)&=\sum_{i=1}^d\frac{\mathbf 1_{\mathscr X_i^d}(x)}{\mathrm{Leb}^m(\mathscr X_i^d)}\int_{\mathscr X_i^d}\lambda_\infty(y') \ \mathrm dy';\label{lambdamulti}\\
\tilde B_{xy}^d(s)&=\sum_{i=1}^d\sum_{j=1}^d\frac{\mathbf1_{\mathscr X_i^d}(x)\mathbf1_{\mathscr X_j^d}(y)}{\mathrm{Leb}^m(\mathscr X_i^d)\mathrm{Leb}^m(\mathscr X_j^d)}\int_{\mathscr X_i^d\times \mathscr X_j^d}B_{x'y'}(s)\ \mathrm dx'\ \mathrm dy'\label{Bmulti};\\
\tilde W^d (x,y)&=\sum_{i=1}^d\sum_{j=1}^d\frac{\mathbf1_{\mathscr X_i^d}(x)\mathbf1_{\mathscr X_j^d}(y)}{\mathrm{Leb}^m(\mathscr X_i^d)\mathrm{Leb}^m(\mathscr X_j^d)}\int_{\mathscr X_i^d\times \mathscr X_j^d}W(x',y')\ \mathrm dx'\ \mathrm dy'.\label{Wmulti}
\end{align}  
Note that when we we average out the parameters of a graphon Hawkes process over the sets $\mathscr X_n^d$ as in \eqref{lambdamulti}--\eqref{Wmulti}, the location \emph{within} $\mathscr X_n^d$ of an event in $\mathscr X_n^d$ does not influence the dynamics of the process $\tilde N^d$. 
Since we have finitely many coordinates, the resulting process can be interpreted as a Hawkes process on a discrete network with nodes $n\in[d]$ corresponding to sites $\mathscr X_n^d$. 
This multivariate Hawkes process $N=(N^d_j)_{j\in[d]}$ on finitely many coordinates can be specified through its conditional intensity. 
Indeed, for each $i\in[d]$, we have a conditional intensity in coordinate $i$ given by 
\begin{equation}\label{conditional intensity multivariate model}
\Lambda_{t,i}^d=\lambda_{\infty,i}^d+\sum_{j=1}^d\sum_{\substack{s\in N_j^d\\s<t}}B_{ij}^d(s)W_{ij}^dh(t-s),
\end{equation}
where, cf.\ \eqref{lambdamulti}--\eqref{Wmulti},
\begin{align}
\lambda_{\infty,i}^d&=\int_{\mathscr X_i^d}\lambda_\infty(x) \ \mathrm dx;\label{cimm1}\\
B_{ij}^d(s)&=\frac1{\mathrm{Leb}^m(\mathscr X_j^d)}\int_{\mathscr X_i^d\times \mathscr X_j^d}B_{xy}(s)\ \mathrm dx\ \mathrm dy;\label{cimm2}\\
W_{ij}^d&=\frac1{\mathrm{Leb}^m(\mathscr X_i^d)\mathrm{Leb}^m(\mathscr X_j^d)}\int_{\mathscr X_i^d\times \mathscr X_j^d}W(x,y)\ \mathrm dx\ \mathrm dy.\label{cimm3}
\end{align} 
Note how \eqref{conditional intensity multivariate model}--\eqref{cimm3} describe the same model as \eqref{lambdamulti}--\eqref{Wmulti}, since integrating a constant density over site $i$ is identical to multiplying by $\mathrm{Leb}^m(\mathscr X_i^d)$. 
The connectivity structure of the model on $[d]$ is described by a complete weighted graph with weights $(W_{ij}^d)_{i,j\in[d]}$. 

We refer to the model with conditional intensity density given by \eqref{cond int dens of prelimit} as the \emph{annealed case}. 
In the \emph{quenched case}, the prelimit uses a simple graph, with edges sampled with probabilities equal to the weights $W_{ij}^d$. 
For this to make sense, we need $C_W\leq1$. 
As observed earlier, this can be accomplished by writing $W(x,y)B_{xy}=(W(x,y)/C_W)(C_WB_{xy})$, assuming $C_W<\infty$. 
More specifically, in the quenched case, we connect coordinates $i,j\in[d]$ in the prelimit model $\bar N^d$ if and only if $Z_{ij}^d=1$, where $Z_{ij}^d\sim\mathrm{Bernoulli}(W_{ij}^d)$. 
Then, for each $i\in[d]$, we have a conditional intensity in coordinate $i$ given by 
\begin{equation}\label{conditional intensity multivariate model quenched}
\Lambda_{t,i}^d=\lambda_{\infty,i}^d+\sum_{j=1}^d\sum_{\substack{s\in N_j^d\\s<t}}B_{ij}^d(s)Z_{ij}^dh(t-s).
\end{equation}
This is equivalent to a graphon Hawkes process $\bar N^d$ having conditional intensity density
\begin{equation}
\bar \lambda^d_t(x)=\tilde\lambda^d_\infty(x)+\sum_{\substack{(s,y,\tilde B^d_{xy})\in \bar N^d\\s<t}}\tilde B^d_{xy}(s)\tilde Z^d(x,y)h(t-s),\label{cond int dens of prelimit quenched}
\end{equation}
where
\begin{equation}
\label{Zmulti}\tilde Z^d(x,y)=\sum_{i=1}^d\sum_{j=1}^d\mathbf1_{\mathscr X_i^d}(x)\mathbf1_{\mathscr X_j^d}(y)Z_{ij}^d.
\end{equation}
We call $\tilde N^d$ on the weighted graph $\tilde W^d$ the \emph{annealed} model because its environment can be seen as the integrated version of that of the \emph{quenched} model $\bar N^d$ on a $\tilde W^d$-random graph.

 
One has to be careful to prevent the multivariate Hawkes process induced by the graphon Hawkes process, as defined with the aid of equations \eqref{lambdamulti}--\eqref{Wmulti}, to be explosive.
It is easy to see that \eqref{alpha specrad} still holds for the model with averaged parameters, but \eqref{rho specrad} is more difficult to verify. 
Suppose that \eqref{rho specrad} holds for the original model, but that with high probability, $B$ is large where $W$ is small, and \textit{vice versa}. 
It may occur that after averaging, the resulting multivariate process is not stable any more. 
However, it can be argued that this problem does not arise for $d$ sufficiently large whenever $\mathrm{mesh}(\mathcal P^d)\to0$, as $d\to\infty$. 
This is shown in the next subsection.  

\subsection{Technical results}\label{sec:technical lemmas}
In the proofs of the convergence results in the next subsections, we need some technical results. 
These results are collected in Lemmas~\ref{lemma offspring size} and~\ref{TVlemma4} below, which might be of independent interest. 
While new to the literature, they are primarily technical, hence we postpone their proofs to Appendix~\hyperref[app B]{B}.
Lemma~\ref{lemma offspring size} deals with the uniform boundedness of certain operator norms. 
Specifically, it demonstrates that if 
$\rho(T_{\mathrm{hom}})<1$, then for $d$ sufficiently large, and hence $\mathrm{mesh}(\mathcal P^d)$ sufficiently small, the approximating processes $\tilde N^d$ are stable, and in particular have finite expected total offspring, for every immigrant.

\begin{lemma}\label{lemma offspring size}
Grant Assumptions~\ref{ass1}, \ref{ass cw cb}, \ref{ass7} and \ref{ass6}. 
Let $N$ be a stable linear graphon Hawkes process satisfying the conditions of Theorem~\ref{thm stability general X spectral radius}, with multivariate approximating processes on weighted graphs $\tilde N^d$ as described by \eqref{lambdamulti}--\eqref{Wmulti}, and with corresponding integral operators $\tilde T_{\mathrm{hom}}^{(d)}$. 
Let $\delta:=\frac12(1-\rho(T_{\mathrm{hom}}))$. 
Then we can find some $D'\in\mathbb N$ such that for $d\geq D'$, $\tilde N^d$ satisfies the conditions of Theorem~\ref{thm stability general X spectral radius}, with $\rho(\tilde T_{\mathrm{hom}}^{(d)})\leq \rho(T_{\mathrm{hom}})+\delta<1$. 

Furthermore, we can find $D\in\mathbb N$ such that the expected cluster size $\mathbb E[Z_x^d]$ for $\tilde N^d$ of a particle in $x\in\mathscr X$ is uniformly bounded over $x\in\mathscr X=[\boldsymbol a,\boldsymbol b]$ and $d\geq D$. 
Denote this uniform bound by $\mathfrak K$.
\end{lemma}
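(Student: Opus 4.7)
The plan is to combine Gelfand's formula \eqref{gelfands formula} with an operator-norm approximation $\tilde T^{(d)}_{\mathrm{hom}}\to T_{\mathrm{hom}}$ delivered by the BV hypotheses in Assumption~\ref{ass6} together with the Poincaré-type bound of Lemma~\ref{TVlemma4}, and then to propagate the result through iterates by telescoping. Since $\rho(T_{\mathrm{hom}})<1$, Gelfand's formula yields $n_0\in\mathbb N$ with $\|T_{\mathrm{hom}}^{n_0}\|^{1/n_0}\leq\rho(T_{\mathrm{hom}})+\delta/2$.

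The kernels of $T_{\mathrm{hom}}$ and $\tilde T^{(d)}_{\mathrm{hom}}$ are $K(x,y)=\|h\|_{L^1}\mathbb E[B_{xy}]W(x,y)$ and $\tilde K^d(x,y)=\|h\|_{L^1}\overline{\mathbb E[B]}_{ij}\overline W_{ij}$ for $(x,y)\in\mathscr X_i^d\times\mathscr X_j^d$. Splitting
\begin{equation*}
K-\tilde K^d=\|h\|_{L^1}\bigl[(\mathbb E[B_{xy}]-\overline{\mathbb E[B]}_{ij})W(x,y)+\overline{\mathbb E[B]}_{ij}(W(x,y)-\overline W_{ij})\bigr],
\end{equation*}
absorbing the uniformly bounded factors via Assumption~\ref{ass cw cb}, and invoking Lemma~\ref{TVlemma4} to control the two local $L^1$-oscillations through the BV bounds on $W(\cdot,y)$ and $\mathbb E[B_{\cdot y}]$ from Assumption~\ref{ass6}, yields $\|\tilde T^{(d)}_{\mathrm{hom}}-T_{\mathrm{hom}}\|\leq C\cdot\mathrm{mesh}(\mathcal P^d)\to 0$. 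With $M:=\sup_d\max(\|T_{\mathrm{hom}}\|,\|\tilde T^{(d)}_{\mathrm{hom}}\|)<\infty$ (both kernels are uniformly bounded by $\|h\|_{L^1}C_WC_B$ on the bounded domain $\mathscr X$), the telescope
\begin{equation*}
(\tilde T^{(d)}_{\mathrm{hom}})^{n_0}-T_{\mathrm{hom}}^{n_0}=\sum_{k=0}^{n_0-1}(\tilde T^{(d)}_{\mathrm{hom}})^k(\tilde T^{(d)}_{\mathrm{hom}}-T_{\mathrm{hom}})T_{\mathrm{hom}}^{n_0-1-k}
\end{equation*}
gives $\|(\tilde T^{(d)}_{\mathrm{hom}})^{n_0}-T_{\mathrm{hom}}^{n_0}\|\leq n_0 M^{n_0-1}\|\tilde T^{(d)}_{\mathrm{hom}}-T_{\mathrm{hom}}\|\to 0$. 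Hence, for some $D'\in\mathbb N$ and all $d\geq D'$, $\|(\tilde T^{(d)}_{\mathrm{hom}})^{n_0}\|^{1/n_0}\leq\rho(T_{\mathrm{hom}})+\delta$, so Gelfand applied to $\tilde T^{(d)}_{\mathrm{hom}}$ yields $\rho(\tilde T^{(d)}_{\mathrm{hom}})\leq\rho(T_{\mathrm{hom}})+\delta<1$. Conditions \eqref{alpha specrad} and \eqref{eta specrad} for $\tilde N^d$ are inherited from those for $N$, since averaging $\lambda_\infty$ or $\eta(t,\cdot)$ on cells cannot increase the site-wise $L^1$ norm; this verifies the hypotheses of Theorem~\ref{thm stability general X spectral radius}.

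For the uniform cluster-size bound, let $\phi_n^d(y)$ denote the expected number of $n$-th generation offspring of a particle at $y$ in $\tilde N^d$. Then $\phi_0^d\equiv 1$ and $\phi_{n+1}^d=\tilde U^{(d)}\phi_n^d$, where $\tilde U^{(d)}:L^\infty(\mathscr X)\to L^\infty(\mathscr X)$ acts by $(\tilde U^{(d)}g)(y)=\int\tilde K^d(x,y)g(x)\,\mathrm dx$. As the Banach-space adjoint of $\tilde T^{(d)}_{\mathrm{hom}}$, one has $\|(\tilde U^{(d)})^k\|=\|(\tilde T^{(d)}_{\mathrm{hom}})^k\|$ for every $k\in\mathbb N$, so from the previous paragraph, for $d\geq D:=D'$, $\|(\tilde U^{(d)})^{n_0}\|\leq r^{n_0}$ uniformly, where $r:=\rho(T_{\mathrm{hom}})+\delta<1$. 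Writing $n=kn_0+s$ with $0\leq s<n_0$ yields $\|(\tilde U^{(d)})^n\|\leq M^sr^{kn_0}$, whence $\mathbb E[Z_x^d]=\sum_n\phi_n^d(x)\leq\sum_n\|(\tilde U^{(d)})^n\|\leq\mathfrak K$, uniformly in $x\in\mathscr X$ and $d\geq D$.

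The main obstacle is the quantitative operator-norm estimate in the second paragraph: since Assumption~\ref{ass6} provides BV regularity only in the first spatial variable while $\tilde K^d$ involves averaging in both variables, a direct Poincaré bound is insufficient. Lemma~\ref{TVlemma4} is designed precisely to handle this asymmetry, providing the needed control on the second-variable averaging through the first-variable BV bounds.
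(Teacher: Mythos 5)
Your proof establishes the operator-norm convergence $\|\tilde T^{(d)}_{\mathrm{hom}}-T_{\mathrm{hom}}\|\to0$ exactly as the paper does (same kernel split, same invocation of Lemma~\ref{TVlemma4} with Assumptions~\ref{ass cw cb} and~\ref{ass6}), but from there your route diverges and is cleaner. The paper proves $\rho(\tilde T^{(d)}_{\mathrm{hom}})\leq\rho(T_{\mathrm{hom}})+\delta$ via a resolvent argument: it shows $\inf_{|\lambda|>\rho(T_{\mathrm{hom}})+\delta}\|R(\lambda;T_{\mathrm{hom}})\|^{-1}>0$ using the resolvent identity and a Neumann series, then factors $\tilde T^{(d)}_{\mathrm{hom}}-\lambda I$ as a product of invertibles. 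You instead pick a single $n_0$ with $\|T_{\mathrm{hom}}^{n_0}\|^{1/n_0}\leq\rho(T_{\mathrm{hom}})+\delta/2$ via Gelfand's formula and telescope $(\tilde T^{(d)}_{\mathrm{hom}})^{n_0}-T_{\mathrm{hom}}^{n_0}$, which needs no spectral-theoretic machinery and delivers the uniform-in-$d$ estimate $\|(\tilde T^{(d)}_{\mathrm{hom}})^{n_0}\|\leq r^{n_0}<1$ directly. That one inequality then yields both $\rho(\tilde T^{(d)}_{\mathrm{hom}})\leq r$ and, by the block decomposition $n=kn_0+s$, the summability $\sum_n\|(\tilde T^{(d)}_{\mathrm{hom}})^n\|\leq\mathfrak K$. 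The paper instead proves this summability separately, via the extra observation $\tilde T^{(d)}_{\mathrm{hom}}=P_dT_{\mathrm{hom}}P_d$ with $P_d$ the $L^1$-contractive conditional expectation onto $\sigma(\mathcal P^d)$, obtaining $\limsup_d\|(\tilde T^{(d)}_{\mathrm{hom}})^n\|\leq\|T_{\mathrm{hom}}^n\|$ for every $n$ and then summing. Your approach unifies the two halves and avoids the $P_d$ trick; the paper's buys the finer per-$n$ comparison, which the lemma itself does not require. Your Banach-adjoint formulation is also a tidy way of making explicit the inequality $\sup_x\mathbb E[Z_x^d]\leq\sum_n\|(\tilde T^{(d)}_{\mathrm{hom}})^n\|$, which the paper simply asserts.

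One caveat on your closing remark: Lemma~\ref{TVlemma4} is a one-function Poincar\'e inequality and does not by itself address the point you raise, namely that $\tilde W^d$ and $\tilde B^d$ average in \emph{both} arguments while Assumption~\ref{ass6} bounds variation only in the first. The paper applies Lemma~\ref{TVlemma4} in the same way you do, without further comment, so this is not a gap you have introduced relative to the paper's own argument --- but the claim that Lemma~\ref{TVlemma4} is ``designed precisely to handle this asymmetry'' overstates what the lemma actually provides.
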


The next result bounds the $L^1$-norm between a function and its piecewise constant approximation by the total variation of that function multiplied with the mesh of the corresponding partition. 
This result, which can be viewed as a suitable version of the Poincar\'e inequality, may be interesting in its own right, in that it has the potential to be applied more broadly. 

\begin{lemma}\label{TVlemma4}
Let $\mathscr X=\prod_{i=1}^m[a_i,b_i)\subset\mathbb R^m$ be some hyperrectangle. 
Assume that $\mathscr X$ shares its boundaries with elements from the partition $\mathcal P^d$ of $\mathbb R^m$ into $K\in\mathbb N$ bounded hyperrectangles $\mathscr X_i^d$, $i\in[K]$; if this is not the case, replace $\mathcal P^d$ by its coarsest refinement such that the boundaries of $\mathscr X$ are also boundaries of elements of $\mathcal P^d$.
Consider a piecewise constant approximation $$f^d|_{\mathscr X_n^d}\equiv\frac1{\mathrm{Leb}^m(\mathscr X_n^d)}\int_{\mathscr X_n^d}f(x)\ \mathrm dx$$ of $f\in L_{\mathrm{loc}}^1(\mathbb R^m)$, which is assumed to be of bounded variation over bounded sets.
Then it holds that $f^d\in L^1(\mathscr X,\mu)$, $f^d\to f$ a.e.\  as $\mathrm{mesh}(\mathcal P^d)\to0$, and finally,
\begin{equation}\label{L1 TV ineq A}
\|f^d-f\|_{L^1(\mathscr X,\mu)}\leq\frac12 \mathrm{Var}(f,\mathscr X)\mathrm{mesh}(\mathcal P^d).
\end{equation}
\end{lemma}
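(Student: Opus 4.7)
The plan is to derive \eqref{L1 TV ineq A} from a local Poincar\'e-type estimate on each cell of $\mathcal P^d$, summed via $\sigma$-additivity and monotonicity of the total-variation measure $|Df|$, and then to obtain $f^d \in L^1(\mathscr X,\mu)$ and the a.e.\ convergence as corollaries. The $L^1$-membership is immediate, since $f \in L^1_{\mathrm{loc}}(\mathbb R^m)$ makes each cellwise average a finite number and $\mathscr X$ meets only finitely many cells of $\mathcal P^d$. Given \eqref{L1 TV ineq A}, $L^1$-convergence yields a.e.\ convergence along a subsequence, and full-sequence convergence at every Lebesgue point of $f$ then follows from the Lebesgue differentiation theorem applied to the shrinking hyperrectangles containing $x$.

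The heart of the argument is the one-dimensional Poincar\'e inequality: for $g\in\mathrm{BV}([a,b])$ with average $\bar g$,
\[
\int_a^b |g(x) - \bar g|\, dx \leq \tfrac12(b-a)\,\mathrm{Var}(g,[a,b]).
\]
I would prove this by writing $g(x)-\bar g = (b-a)^{-1}\int_a^b (g(x)-g(y))\,dy$, bounding $|g(x)-g(y)| \leq |dg|([\min(x,y),\max(x,y)])$ (with $|dg|$ the Stieltjes variation measure of $g$), and applying Fubini to get
\[
\int_a^b\int_a^b |g(x)-g(y)|\, dx\, dy \leq 2\int_a^b (t-a)(b-t)\, d|dg|(t) \leq \tfrac12(b-a)^2\,|dg|([a,b]),
\]
where the last step uses $(t-a)(b-t)\leq (b-a)^2/4$ on $[a,b]$.

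For a generic cell $R = \prod_{k=1}^m [c_k,d_k]$ of $\mathcal P^d$ with sides $L_k = d_k - c_k$, I would telescope along coordinate axes via $|f(x)-f(y)| \leq \sum_{k=1}^m |f(x^{(k-1)}) - f(x^{(k)})|$, where $x^{(k)}$ agrees with $y$ on coordinates $1,\ldots,k$ and with $x$ on the remaining ones. Integrating the $k$-th term over $R\times R$ and collapsing the unused coordinates reduces the task to a 1D double integral in the $k$-th slice, to which the estimate above applies slice-by-slice, yielding
\[
\int_R |f - \bar f_R|\, dx \leq \tfrac12 \sum_{k=1}^m L_k \int_R |\partial_{x_k} f|\, dx \leq \tfrac12\,\mathrm{diam}(R)\,\mathrm{Var}(f,R),
\]
where the last inequality uses the pointwise Cauchy-Schwarz bound $\sum_k L_k\,|\partial_{x_k} f(x)| \leq \mathrm{diam}(R)\,|\nabla f(x)|_2$ together with the identity $\int_R |\nabla f|_2 = \mathrm{Var}(f,R)$ (under the Euclidean convention for the test vectors in Definition~\ref{TVmultivariate}). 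Summing over cells $\mathscr X_n^d$ and invoking $\mathrm{diam}(\mathscr X_n^d) \leq \mathrm{mesh}(\mathcal P^d)$ and $\sum_n \mathrm{Var}(f,\mathscr X_n^d) \leq \mathrm{Var}(f,\mathscr X)$ then delivers \eqref{L1 TV ineq A}. The main obstacle is making the axis-telescoping argument rigorous for genuine $\mathrm{BV}$ functions (rather than Sobolev ones): one needs that $f$ has finite variation along almost every axis-parallel line and that the partial-variation functionals $\int_R |\partial_{x_k} f|$ correspond to well-defined components of the measure $|Df|$, both of which can be extracted from standard BV-slicing results.
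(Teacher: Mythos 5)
Your proposal is correct and reaches the stated bound, but it takes a genuinely different and more self-contained route than the paper. The paper invokes two external results: Leoni's Theorem~13.9 (smooth $W^{1,1}\cap C^\infty$ approximation of a BV function with convergence of total variations) and Acosta--Dur\'an's optimal $L^1$ Poincar\'e inequality on convex domains, then passes to the limit via Fatou. You instead build the cellwise estimate from scratch: prove the sharp one-dimensional bound $\int_a^b|g-\bar g|\le\tfrac12(b-a)\,\mathrm{Var}(g,[a,b])$ via the double-integral/Fubini trick, lift it to a rectangle $R=\prod_k[c_k,d_k]$ by telescoping along coordinate axes --- which cleanly yields the intermediate bound $\int_R|f-\bar f_R|\le\tfrac12\sum_k L_k\,|D_kf|(R)$ --- and then apply Cauchy--Schwarz through the polar decomposition $Df=\sigma|Df|$, $|\sigma|_2=1$, to get $\sum_k L_k|D_kf|(R)\le\mathrm{diam}(R)\,|Df|(R)$. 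The trade-offs are real: your argument avoids the smooth-approximation machinery entirely (the $L^1$ integrals are insensitive to which BV representative you pick on each line, and the 1D estimate is elementary), so it is closer to first principles. On the other hand, it works only because the cells are axis-parallel rectangles; the paper's route via Acosta--Dur\'an applies to arbitrary convex cells, which the paper explicitly exploits in the remark following the lemma about convex tessellations. You also correctly flag the one technical debt you incur --- BV slicing, i.e.\ that a.e.\ axis-parallel slice $f_z$ is 1D BV with $\int_{\hat R_k}\mathrm{Var}(f_z)\,dz=|D_kf|(R)$ --- which is standard (Ambrosio--Fusco--Pallara or Evans--Gariepy) and indeed closes the gap. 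One minor caveat worth knowing but shared with the paper's own statement: the a.e.\ convergence $f^d\to f$ via Lebesgue differentiation is only automatic if the cells of $\mathcal P^d$ shrink with \emph{bounded eccentricity}; for wildly elongated rectangles the differentiation basis can fail, though this does not affect the inequality \eqref{L1 TV ineq A}, and $L^1$-convergence plus a subsequence extraction still gives a.e.\ convergence along a subsequence.
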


In the case of $\mathscr X\subset\mathbb R$, we can state a version of Lemma~\ref{TVlemma4} that is slightly more general, and admits an elementary proof. 
See Lemma~\ref{TVlemma3} in the appendix.

\begin{remark}
Instead of partitioning $\mathbb R^m$ into hyperrectangles, as prescribed by Assumption~\ref{ass7}, we could take a partition of $\mathbb R^m$ into \emph{convex} sets in such a way that the mesh of the partition tends to $0$, as $d\to\infty$. 
The particular choice of the tesselation does not alter our analysis. 
Indeed, in such a scenario, \cite{Acosta}, Theorem~3.2, still holds, hence the proof of Lemma~\ref{TVlemma4} can be adapted to different tesselations.
\end{remark}

\subsection{Annealed case}\label{sec:ann}
In this subsection, we prove convergence in the annealed case, where the prelimit $\tilde N^d$ is a multivariate Hawkes process on a complete weighted graph $\tilde W^d$, with parameters obtained as averages of the parameters of $N$ over the sets of a partition $\mathcal P^d$ of $\mathscr X=[\boldsymbol a,\boldsymbol b]$.
As a corollary, we show that any sufficiently regular sequence of mutually exciting point processes with $L^1$-convergent parameters converges to a graphon Hawkes process.
The quenched case is studied in the next subsection.

\begin{theorem}\label{annealed theorem}
Grant Assumptions~\ref{ass1}, \ref{ass cw cb}, \ref{ass7} and \ref{ass6}. 
Let $N$ be a linear graphon Hawkes process on $\mathscr X=[\boldsymbol a,\boldsymbol b]\subset\mathbb R^m$ and time interval $[0,1]$, starting on an empty history, satisfying the conditions  of Theorem \ref{thm stability general X spectral radius}, with multivariate approximating processes on weighted graphs $\tilde N^d$ as described by (\ref{cond int dens of prelimit})--(\ref{Wmulti}), with corresponding integral operators $\tilde T_{\mathrm{hom}}^{(d)}$. 
Using the metric $\mathfrak d$ from Definition \ref{metric on spatiotemporal}, it holds that $\tilde N^d\stackrel{\mathrm{ucp}}\longrightarrow N$, as $d\to\infty$. 
\end{theorem}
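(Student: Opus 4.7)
The plan is to construct $N$ and $\tilde N^d$ on a common probability space via a generation-by-generation coupling of their cluster representations (Definition~\ref{clusterdef}), and then to show that the expected $\mathfrak d$-distance on any compact $\mathscr A'\subset\mathscr X$ tends to $0$ as $d\to\infty$, which by Markov's inequality yields \eqref{UCP convergence}. By Lemma~\ref{lemma offspring size}, for $d\ge D$ both $N$ and $\tilde N^d$ are stable with $\rho(\tilde T^{(d)}_{\mathrm{hom}})\le\rho(T_{\mathrm{hom}})+\delta<1$ and uniformly bounded expected cluster size $\mathfrak K$; this ensures all series over generations converge and we may focus on the first few generations for the analysis.

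First I would couple the immigrant layers: let $\Pi_0$ be a Poisson random measure on $[0,1]\times\mathscr X\times\mathbb R_+$ with Lebesgue intensity, and declare immigrants of $N$ to be the points $(t,x)$ with $u\le\lambda_\infty(x)$, and immigrants of $\tilde N^d$ to be those with $u\le\tilde\lambda^d_\infty(x)$. The symmetric difference contributes events to one process but not to the other, and by Lemma~\ref{TVlemma4} the expected number of such mismatched immigrants is bounded by $\|\lambda_\infty-\tilde\lambda^d_\infty\|_{L^1(\mathscr X)}\le\tfrac12\|\lambda_\infty\|_{\mathrm{TV}(\mathscr X)}\mathrm{mesh}(\mathcal P^d)\to0$. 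I would then propagate this coupling to offspring: given a common immigrant at $(s,y)$, I use a shared Poisson random measure on $[s,1]\times\mathscr X\times\mathbb R_+$ to simultaneously generate first-generation offspring, declaring an offspring in $N$ whenever the driving mark satisfies $u\le B_{xy}W(x,y)h(t-s)$ and in $\tilde N^d$ whenever $u\le\tilde B^d_{xy}\tilde W^d(x,y)h(t-s)$. Matched offspring occur at identical $(t,x)$; mismatches contribute one-sided events. The coupled marks $B_{\cdot y}$ and $\tilde B^d_{\cdot y}$ are built from the same underlying realization, so their $L^1$-difference is controlled by $\mathbb E\|B_{\cdot y}-\tilde B^d_{\cdot y}\|_{L^1(\mathscr X)}$, which again tends to $0$ uniformly in $y$ by Lemma~\ref{TVlemma4} and Assumption~\ref{ass6}.

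Next I would quantify the per-generation discrepancy. Writing $\Delta^d(x,y):=|B_{xy}W(x,y)-\tilde B^d_{xy}\tilde W^d(x,y)|$ and using $|ab-a'b'|\le|a-a'|b+a'|b-b'|$ together with Assumption~\ref{ass cw cb} to bound the uniform norms of $B,W,\tilde B^d,\tilde W^d$ by $C_B,C_W$, the expected number of mismatched first-generation offspring per immigrant is at most $\|h\|_{L^1(\mathbb R_+)}\int_{\mathscr X}\mathbb E\Delta^d(x,y)\,\mathrm dx\le\varepsilon_d$ uniformly in $y$, where $\varepsilon_d\to0$ by Lemma~\ref{TVlemma4} applied to both $B_{\cdot y}$ and $W(\cdot,y)$. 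For matched offspring, the spatial coordinates in $N$ and $\tilde N^d$ coincide and the mark difference in $L^1(\mathscr X)$ is of the same order $\varepsilon_d$. Iterating this through generations $n\ge1$, each mismatched event creates an independent subtree whose total expected size is bounded by $\mathfrak K$, and each matched event propagates an error of size $\varepsilon_d$ through an expected $\mathfrak K$ descendants. Summing the geometric-type contributions controlled by the uniform cluster bound gives $\mathbb E\,\mathfrak d(N_{\mathscr A'},\tilde N^d_{\mathscr A'})\le C\,\varepsilon_d$ for some constant $C=C(\mathfrak K,C_B,C_W,\|\lambda_\infty\|_{L^1},\|h\|_{L^1})$, uniformly over compact convex $\mathscr A'\subset\mathscr X$.

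The main obstacle is the bookkeeping needed to propagate errors cleanly through generations: matched offspring with slightly different marks still yield matched spatial locations (since both processes are driven by the same thinning variable for the common part of the rates), but their future subtrees diverge because the rates $\tilde B^d \tilde W^d$ and $BW$ at those descendants now differ in a history-dependent way. Handling this requires a careful two-part decomposition of each generation into ``fully matched'' and ``one-sided'' events, and the use of Lemma~\ref{lemma offspring size} to control the total contribution uniformly in $d$ and in the starting location. Once this is in place, Markov's inequality converts the bound $\mathbb E\,\mathfrak d(N_{\mathscr A'},\tilde N^d_{\mathscr A'})=O(\mathrm{mesh}(\mathcal P^d))$ into ucp convergence, completing the proof.
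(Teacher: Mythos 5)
Your overall strategy is the same as the paper's: construct a coupling of $N$ and $\tilde N^d$, bound $\mathbb E\,\mathfrak d(N,\tilde N^d)$ using the Poincar\'e/TV bound of Lemma~\ref{TVlemma4} applied to $\lambda_\infty$, $B$ and $W$, use Lemma~\ref{lemma offspring size} to control expected cluster sizes uniformly in $d\ge D$, and convert the expectation bound to a probability bound via Markov. You also decompose the discrepancy into essentially the same three contributions as the paper: mismatched immigrants, one-sided offspring generated at matched events, and direct mark/location differences at matched events. The main cosmetic difference is the coupling mechanism: you use a direct Poisson embedding with Lebesgue intensity on $[0,1]\times\mathscr X\times\mathbb R_+$, which yields matched events at \emph{identical} $(t,x)$, whereas the paper couples via the construction of Section~\ref{section existence etc} (per-site PRM plus acceptance--rejection with uniform marks), which only yields matched events in the same cell $\mathscr X_n^d$ and then exploits piecewise constancy of $\tilde N^d$'s parameters to argue that the within-cell location is dynamically irrelevant. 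Your variant eliminates one step of the paper's part~(iii) (the spatial location mismatch), which is a modest simplification.

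However, you leave the key step open. You flag as a ``main obstacle'' the bookkeeping of propagating errors through generations, worrying about subtrees diverging ``in a history-dependent way,'' and say ``once this is in place'' the proof is done. This is precisely the part that needs to be written, and it is not a compounding recursion at all. In the cluster representation each event generates its offspring independently of all other events, so there is no history dependence: the set of \emph{matched} events is itself a tree whose rate is a sub-rate of $N$'s, hence has expected total size at most $\mathfrak K$ per immigrant (and at most $\alpha\mathfrak K$ in total on $[0,1]$, which is what the paper uses). Each matched event at location $y$ produces one-sided children at an expected rate
$$
\|h\|_{L^1(\mathbb R_+)}\int_{\mathscr X}\mathbb E\left|B_{xy}W(x,y)-\tilde B^d_{xy}\tilde W^d(x,y)\right|\,\mathrm dx \le \varepsilon_d,
$$
and each such one-sided child spawns an \emph{independent} one-sided cluster whose expected size is $\le\mathfrak K$ and whose per-event contribution to $\mathfrak d$ is $\le C_{\mathfrak d}$. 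So the total one-sided contribution is at most $\alpha\mathfrak K\cdot\varepsilon_d\cdot\mathfrak K\cdot C_{\mathfrak d}$ in expectation --- a product of fixed bounds, with $\varepsilon_d\to0$. The direct mark contribution at matched events is handled analogously, bounded by $\alpha\mathfrak K\cdot\tfrac12\sup_y\mathbb E\|B_{\cdot y}\|_{\mathrm{TV}(\mathscr X)}\cdot\mathrm{mesh}(\mathcal P^d)$. This is exactly the paper's counting argument; there is no geometric recursion to sum and no divergent subtree problem. Once you write this explicit counting step, the proposal is complete.
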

\begin{proof}
Under Assumptions~\ref{ass1}, \ref{ass cw cb}, \ref{ass7} and \ref{ass6}, as in Lemma~\ref{lemma offspring size}, for $\delta:=\frac12(1-\rho(T_{\mathrm{hom}}))$, let $D\in\mathbb N$ be such that: 
(i) for all $d\geq D$, $\rho(\tilde T_{\mathrm{hom}}^{(d)})\leq \rho(T_{\mathrm{hom}})+\delta<1$; and 
(ii) there exists $\mathfrak K<\infty$  bounding the expected cluster size for $N$ and the expected cluster size $\mathbb E[Z_x^d]$, uniformly over $x\in\mathscr X$, $d\geq D$.

The prelimit $\tilde N^d$ is a multivariate Hawkes process, i.e., a graphon Hawkes process with piecewise constant parameters on the sets of the partition $\mathcal P^d$. 
In particular, it is defined on the same space as $N$. 
Hence, it is possible to couple those processes by defining them w.r.t.\ the same Poisson random measure. 
We assume such coupled sample paths, and we bound the expected discrepancy between $N$ and $\tilde N^d$, in the metric $\mathfrak d$, for $d$ large. 
Since $\mathfrak d(N_{\mathscr A'},M_{\mathscr A'})\leq\mathfrak d(N_{\mathscr A},M_{\mathscr A})$ for $\mathscr A'\subset\mathscr A$ for this metric, it suffices to consider discrepancies on the whole space $[\boldsymbol a,\boldsymbol b]$. 
Fix $\delta,\epsilon>0$. 
We use the bound on the expected discrepancies to bound the probability of a discrepancy larger than $\delta$ by $\epsilon$; more formally, we show that there exists $d$ such that $\mathbb P\left(\mathfrak d(\tilde N^d, N)>\delta\right)<\epsilon$,
thus establishing ucp convergence. 

The discrepancy between two processes in the metric $\mathfrak d$ consists of several parts. 
For each simultaneous event, we compare the difference in location and the difference in $L^1(\mathscr X)$-norm between the marks corresponding to those locations. 
Next, for each non-simultaneous event the contribution to the $\mathfrak d$-distance equals unity, plus the distance of the spatial coordinate to the origin, plus the $L^1(\mathscr X)$-norm of the mark.

To bound the expected discrepancy in the $\mathfrak d$-metric, we consider differences in the parameters that contribute to the discrepancies between the two processes $N$ and $\tilde N^d$. 
A convenient way to do this is by employing the branching structure of the \emph{linear} graphon Hawkes process. Each contribution to the discrepancy in $\mathfrak d$-distance is of exactly one of the following types:
\begin{enumerate}[label=(\roman*)]
\item The difference $\|\lambda_\infty-\tilde\lambda^d_\infty\|_{L^1([\boldsymbol a,\boldsymbol b])}$ in baseline intensities creates a stream of nonsimultaneous events, each of which creates a cluster of events for either $N$ or $\tilde N^d$. 
\item Because of averaging, $B_{\cdot y}$ differs from $\tilde B_{\cdot y'}^d$,  and $W(\cdot ,y)$ differs from $\tilde W^d(\cdot ,y')$. 
For each simultaneous event with spatial coordinates $y$ and $y'$ of $N$ and $\tilde N^d$, respectively, this creates additional events. 
Again, each of those additional events generates clusters of events for either $N$ or $\tilde N^d$.
\item Even when we have a simultaneous event for both processes, the point from $\tilde N^d$ is located differently within $\mathscr X_i^d$ when compared to the point from $N$. 
Because all parameters of $\tilde N^d$ are constant on $\mathscr X_i^d$, this does not influence the future evolution of the process, although it affects $\mathfrak d$. 
Both the location and the mark may be different, causing discrepancies. 
\end{enumerate}
In the following, we formally bound (i)--(iii) separately.

Fix $\epsilon'>0$. 
We start by bounding the discrepancy caused by (i), i.e., the events generated by differences in baseline intensities, and offspring thereof. 
Suppose that $d_1\geq D$ is sufficiently large such that $\mathrm{mesh}(\mathcal P^d)\leq2\epsilon'/\|\lambda_\infty\|_{\mathrm{TV}(\mathscr X)}.$ 
Then, $\|\lambda_\infty-\tilde\lambda_\infty^d\|_{L^1(\mathscr X)}\leq\epsilon'$ by Lemma~\ref{TVlemma4}. 
Each of the events caused by the difference in baseline intensities creates a cluster of expected size bounded by $\mathfrak K$. 
Therefore, on the whole space $\mathscr X$, the difference in baseline intensities causes a stream of nonsimultaneous events with an expected count over our time frame $[0,1]$ bounded by $\epsilon'\|\lambda_\infty\|_{\mathrm{TV}(\mathscr X)}\mathfrak K$. 
The contribution to $\mathfrak d$ of each of those events can be bounded by $C_{\mathfrak d}:=1+|\boldsymbol a|\vee|\boldsymbol b|+C_B$. 
Hence, by choosing $$\epsilon'=\frac{\delta\epsilon}{3\|\lambda_\infty\|_{\mathrm{TV}(\mathscr X)}\mathfrak KC_{\mathfrak d}},$$ 
we can assure that the discrepancy in the metric $\mathfrak d$ caused by (i) can be bounded by $\delta\epsilon/3$.

Next, we bound the discrepancy in $\mathfrak d$-distance from (ii), i.e., discrepancies caused by differences between $B$ and $\tilde B^d$ and between $W$ and $\tilde W^d$, for each simultaneous event of $N$ and $\tilde N^d$. 
To this end, first note that the number of events for $N$ on the time interval $[0,1]$, and therefore the expected number of simultaneous events of $N$ and $\tilde N^d$ on $[0,1]$, can be bounded by $\alpha\mathfrak K$, where $\alpha:=\int_{\mathscr X}\lambda_\infty(x)\ \mathrm dx$, as in \eqref{alpha specrad}. 
For each of those events, there is a difference in mark and in location between $N$ and $\tilde N^d$. 
By the construction presented in Section~\ref{section existence etc}, we know that the locations of a simultaneous point for $N$ and $\tilde N^d$ lie in the same element $\mathscr X_n^d$ of the partition $\mathcal P^d$. 
Furthermore, since the parameters of $\tilde N^d$ are constant on the sets $\mathscr X_n^d$, it follows that the location of an event for $\tilde N^d$ \emph{within} $\mathscr X_n^d$ does not influence future dynamics, meaning that if we want to bound the expected number of events caused by differences between $B_{\cdot y}$ and $\tilde B_{\cdot y'}^d$ and between $W(\cdot ,y)$ and $\tilde W^d(\cdot ,y')$ for a simultaneous event for $N$ and $\tilde N^d$, we may in fact assume that $y=y'$. 
Now for each simultaneous event of $N$ and $\tilde N^d$, the difference between mark and graphon parameters and between locations causes an offspring with an expected count bounded by 
\begin{align*}
&\sup_{y\in\mathscr X}\int_{\mathscr X}\mathbb E\left|B_{xy}W(x,y)-\tilde B_{xy}^d\tilde W^d(x,y)\right|\ \mathrm dx\\
&\leq\sup_{y\in\mathscr X}\int_{\mathscr X}\mathbb E\left|B_{xy}-\tilde B_{xy}^d\right|W(x,y)\ \mathrm dx+
\sup_{y\in\mathscr X}\int_{\mathscr X}\mathbb E\left[\tilde B_{xy}^d\right]\left|W(x,y)-\tilde W^d(x,y)\right|\ \mathrm dx\\
&\leq\mathrm{mesh}(\mathcal P^d)\left(C_W\sup_{y\in\mathscr X}\mathbb E\|B_{\cdot y}\|_{\mathrm{TV}(\mathscr X)}+C_B\sup_{y\in\mathscr X}\|W(\cdot,y)\|_{\mathrm{TV}(\mathscr X)}\right)\big/2,
\end{align*}
using Assumption~\ref{ass cw cb} and Lemma~\ref{TVlemma4}. 
In turn, each of those events generates a cluster of further events with expected size bounded by $\mathfrak K$, with each event contributing to $\mathfrak d$ at most $C_{\mathfrak d}:=1+|\boldsymbol a|\vee|\boldsymbol b|+C_B$. 
Hence, selecting $d_2\geq D$ such that 
$$
\mathrm{mesh}(\mathcal P^d)<\frac{2\delta\epsilon}{3\alpha \mathfrak K^2\left(C_W\sup_{y\in\mathscr X}\mathbb E\|B_{\cdot y}\|_{\mathrm{TV}(\mathscr X)}+C_B\sup_{y\in\mathscr X}\|W(\cdot,y)\|_{\mathrm{TV}(\mathscr X)}\right)C_{\mathfrak d}},
$$
guarantees that the expected discrepancy between $N$ and $\tilde N^d$ during $[0,1]$ caused by (ii) above is less than $\delta\epsilon/3$.

Finally, we bound the expected discrepancy in $\mathfrak d$-distance caused by (iii), i.e., the difference in contributions to $\mathfrak d$ for simultaneous events of $N$ and $\tilde N^d$ through different locations and marks. 
Again, the expected number of simultaneous events of $N$ and $\tilde N^d$ on $[0,1]$ can be bounded by $\alpha\mathfrak K$. 
For each of those events, the locations are within the same element $\mathscr X_n^d$ of the partition $\mathcal P^d$, hence the difference in locations contributes at most $\mathrm{mesh}(\mathcal P^d)$ to the distance $\mathfrak d(N,\tilde N^d)$, while by Lemma~\ref{TVlemma4} the difference in marks causes an expected discrepancy bounded by $\mathrm{mesh}(\mathcal P^d)\sup_{y\in\mathscr X}\mathbb E\|B_{\cdot y}\|_{\mathrm{TV}(\mathscr X)}/2$. 
Hence, selecting $d_3\geq D$ such that 
$$
\mathrm{mesh}(\mathcal P^d)<\frac{2\delta\epsilon}{3\alpha\mathfrak K(2+\sup_{y\in\mathscr X}\mathbb E\|B_{\cdot y}\|_{\mathrm{TV}(\mathscr X)})},
$$
guarantees that the expected discrepancy in $\mathfrak d$-distance between $N$ and $\tilde N^d$ caused by differences in locations and mark sizes is less than $\delta\epsilon/3$.

Combining the bounds for the contributions (i)--(iii), it follows that $\mathbb E[\mathfrak d(\tilde N^d,N)]<\delta\epsilon$ for $d\geq\max\{d_1,d_2,d_3\}$, hence by Markov's inequality, $\mathbb P\left(\mathfrak d(\tilde N^d, N)>\delta\right)<\epsilon$. 
\end{proof}

The essential ingredients in the proof of Theorem~\ref{annealed theorem} are the $L^1$-convergence of the parameters of the prelimit to those of the graphon Hawkes process, and that the prelimit has uniformly bounded cluster sizes. 
Hence, we can state the following converse to Theorem~\ref{annealed theorem}, which holds for prelimits obtained in different ways than through~\eqref{cond int dens of prelimit}--\eqref{Wmulti} as well.

\begin{corollary}\label{conversecor}
Let $(N^d)_{d\in\mathbb N}$ be a sequence of marked $d$-variate Hawkes processes. 
Embed each of the $N^d$ in the space of graphon Hawkes processes by taking a sequence of partitions $\mathcal P^d$ of $\mathscr X$ satisfying Assumption~\ref{ass7}, letting coordinate $i$ correspond to $\mathscr X_i^d$, and taking the parameters $\tilde\lambda_\infty^d,\tilde W^d,\tilde B^d$ of the graphon Hawkes process $\tilde N^d$ corresponding to $N^d$ piecewise constant on $\mathscr X_i^d$, $\mathscr X_i^d\times\mathscr X_j^d$, respectively, and such that \eqref{conditional intensity multivariate model}--\eqref{cimm3} hold.

Suppose that $\mathscr X$, $\mathcal P^d$, $\lambda_\infty$, $W$ and $B$ can be chosen such that: 
\begin{enumerate}[label=(\roman*)]
\item $\lambda_\infty,W,B$ satisfy Assumptions~\ref{ass1} and~\ref{ass cw cb}; 
\item $\tilde \lambda_\infty^d\to\lambda_\infty$ in $L^1(\mathscr X)$, as $d\to\infty$;
\item $\tilde W^d(\cdot,y)\to W(\cdot,y)$ and $\mathbb E[\tilde B^d_{\cdot y}]\to \mathbb E[B_{\cdot y}]$ in $L^1(\mathscr X)$, as $d\to\infty$, uniformly over $y\in\mathscr X$;
\item The expected cluster size $\mathbb E[Z_x^d]$ is uniformly bounded in $x\in\mathscr X, d\in\mathbb N$.
\end{enumerate}
Then, using the metric $\mathfrak d$ from Definition \ref{metric on spatiotemporal}, it holds that $(N^d)_{d\in\mathbb N}\stackrel{\mathrm{ucp}}\longrightarrow N$, as $d\to\infty$. 
\end{corollary}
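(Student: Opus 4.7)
The plan is to imitate closely the proof of Theorem~\ref{annealed theorem}. That argument exploited Assumption~\ref{ass6} only through Lemma~\ref{TVlemma4}, and only in order to produce $L^1(\mathscr X)$-convergence of the averaged parameters to the graphon parameters; and it exploited the spectral-radius hypothesis only through Lemma~\ref{lemma offspring size}, and only to extract a uniform bound $\mathfrak K$ on the expected cluster sizes of $\tilde N^d$. Hypotheses (ii)--(iv) of the corollary supply precisely those two inputs directly, so the same chain of estimates should go through with the Lemma~\ref{TVlemma4}-based estimates replaced by direct invocations of (ii)--(iv); hypothesis (i) takes over the role of Assumptions~\ref{ass1}--\ref{ass cw cb}.

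Concretely, I would couple $N$ and $\tilde N^d$ with respect to a common Poisson random measure on $\mathbb R\times L\times\mathbb R_+$, as in Section~\ref{section existence etc}, and reduce ucp convergence to $\mathbb E[\mathfrak d(\tilde N^d,N)]\to 0$ on $[0,1]\times\mathscr X$ via Markov's inequality (the upgrade to general compact, convex $\mathscr A'\subset\mathscr A\subset\mathscr X$ is immediate from the monotonicity $\mathfrak d(\tilde N^d_{\mathscr A'},N_{\mathscr A'})\leq\mathfrak d(\tilde N^d_{\mathscr A},N_{\mathscr A})$). Using the branching representation of linear graphon Hawkes processes, I would then decompose $\mathbb E[\mathfrak d(\tilde N^d,N)]$ into the three categories (i)--(iii) of the proof of Theorem~\ref{annealed theorem}. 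Category (i), discrepancies from the difference in baseline intensities, is controlled by (ii) and (iv): the stream of non-simultaneous events has expected $[0,1]$-count $\|\lambda_\infty-\tilde\lambda_\infty^d\|_{L^1(\mathscr X)}\to 0$, each spawning a cluster of expected size $\leq\mathfrak K$, with per-event cost to $\mathfrak d$ at most $C_{\mathfrak d}:=1+|\boldsymbol a|\vee|\boldsymbol b|+C_B$ by Assumption~\ref{ass cw cb}. Category (ii), excess offspring generated at simultaneous events by the discrepancy between $\mathbb E[B_{\cdot y}]W(\cdot,y)$ and $\mathbb E[\tilde B^d_{\cdot y}]\tilde W^d(\cdot,y)$, is controlled via the triangle inequality
\[
\bigl\|\mathbb E[B_{\cdot y}]W(\cdot,y)-\mathbb E[\tilde B^d_{\cdot y}]\tilde W^d(\cdot,y)\bigr\|_{L^1(\mathscr X)}\leq C_W\bigl\|\mathbb E[B_{\cdot y}]-\mathbb E[\tilde B^d_{\cdot y}]\bigr\|_{L^1(\mathscr X)}+C_B\bigl\|W(\cdot,y)-\tilde W^d(\cdot,y)\bigr\|_{L^1(\mathscr X)},
\]
combined with the uniform $L^1$-convergence from (iii), the bound $\alpha\mathfrak K$ on the expected number of simultaneous events on $[0,1]$, and another $\mathfrak K C_{\mathfrak d}$ factor from the clusters; this contributes $o(1)$ as well.

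The main obstacle is category (iii), the per-event discrepancy at simultaneous points. Its location component is bounded by $\mathrm{mesh}(\mathcal P^d)\to 0$ under Assumption~\ref{ass7} and is innocuous. The realized-mark component, however, involves $\mathbb E\|B^N_{\cdot y}-\tilde B^d_{\cdot y}\|_{L^1(\mathscr X)}$, which is \emph{not} implied by the in-expectation convergence of hypothesis (iii) alone. I would close this gap by exploiting the freedom built into the phrase ``$\mathscr X,\mathcal P^d,\lambda_\infty,W,B$ can be chosen such that'': select the canonical coupling in which $\tilde B^d_{\cdot y}$ is identified, for $y\in\mathscr X_j^d$, with the cell average of $B_{\cdot y'}$ over $y'\in\mathscr X_j^d$, so that the in-mean convergence from (iii) is realized pathwise and, via the uniform integrability supplied by Assumption~\ref{ass cw cb}, vanishes uniformly in $y$. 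Combined with the uniformly bounded expected number $\alpha\mathfrak K$ of simultaneous events and the usual three-way $\delta\epsilon/3$ balancing, Markov's inequality then delivers the stated ucp convergence.
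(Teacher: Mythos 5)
Your high-level strategy matches the paper's own (the paper gives no separate proof, only the remark that (ii)--(iv) supply what Lemmas~\ref{TVlemma4} and~\ref{lemma offspring size} supplied for Theorem~\ref{annealed theorem}), and your treatment of category~(i) and of the cluster-size bound via (iv) is right. But the substitution of (iii) for Lemma~\ref{TVlemma4} is not as clean as you make out, because the proof of Theorem~\ref{annealed theorem} controls the \emph{realized} mark process in two places, while (iii) controls only its mean. For category~(ii), the quantity the theorem actually bounds is $\sup_{y}\int_{\mathscr X}\mathbb E\bigl|B_{xy}W(x,y)-\tilde B^d_{xy}\tilde W^d(x,y)\bigr|\,\mathrm dx$ with the realizations inside the expectation, since the excess offspring stream of a simultaneous event is driven by the realized marks of that event, not by their averages. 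You replace this with $\bigl\|\mathbb E[B_{\cdot y}]W(\cdot,y)-\mathbb E[\tilde B^d_{\cdot y}]\tilde W^d(\cdot,y)\bigr\|_{L^1(\mathscr X)}$, which by Jensen is the \emph{smaller} quantity, and bounding it does not control the larger one: e.g.\ with independent $\mathrm{Bernoulli}(1/2)$ marks for $B$ and $\tilde B^d$ the means agree exactly while $\mathbb E|B_{xy}-\tilde B^d_{xy}|=1/2$. So the gap you identify for category~(iii) is in fact already present in category~(ii), and it is not enough to invoke (iii) as stated.

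Your proposed fix is the right instinct (adopt the cell-average coupling, i.e.\ choose $B$ so that $\tilde B^d$ is the conditional expectation of $B$ with respect to $\sigma(\mathcal P^d\times\mathcal P^d)$), but the argument is not closed. First, ``uniform integrability supplied by Assumption~\ref{ass cw cb}'' is not available: that assumption bounds $\mathbb E[B_{xy}]$, not the realized $B_{xy}$, so it does not dominate the realized paths. Second, even granting a.s.\ convergence of cell averages plus domination, one obtains $\mathbb E\|\tilde B^d_{\cdot y}-B_{\cdot y}\|_{L^1(\mathscr X)}\to 0$ only \emph{pointwise} in $y$; the uniformity over $y\in\mathscr X$ that the argument needs (to keep the per-event expected excess offspring uniformly small) requires a uniform modulus, which is precisely what the dropped clause $\sup_{y}\mathbb E\|B_{\cdot y}\|_{\mathrm{TV}(\mathscr X)}<\infty$ of Assumption~\ref{ass6} provides through Lemma~\ref{TVlemma4}. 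What the corollary's proof really needs, and what your write-up should state explicitly as the hypothesis being used (in place of, or as a strengthening of, the $\mathbb E[B]$-part of (iii)), is the pathwise condition $\sup_{y\in\mathscr X}\mathbb E\|\tilde B^d_{\cdot y}-B_{\cdot y}\|_{L^1(\mathscr X)}\to 0$ under the chosen coupling; with that in hand, categories~(ii) and~(iii) both go through exactly as in Theorem~\ref{annealed theorem} and the rest of your argument is fine.
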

\begin{remark}
Condition~(iv) of Corollary~\ref{conversecor} is satisfied if we assume that there is a $\delta\in(0,1)$ such that $\|h\|_{L^1(\mathbb R_+)}\sup_{y\in\mathscr X}\int_{\mathscr X}\mathbb E [\tilde B^d_{xy}]\tilde W^d(x,y)\ \mathrm dx\leq1-\delta$ uniformly over $d\in\mathbb N$.
This implies that the expected cluster size is bounded uniformly by $\sum_{n\geq0}(1-\delta)^n=1/\delta$; see also Remark~\ref{rmk5}.
\end{remark}

\subsection{Quenched case}\label{sec:que}
In this subsection, we consider the quenched case, where the prelimit $\bar N^d$ is a multivariate Hawkes process on a looped, directed graph $\mathbb G^d$ with edges sampled from $(W_{ij}^d)_{i,j\in[d]}$. 
\begin{theorem}\label{quenched theorem}
Work in the setting of Theorem~\ref{annealed theorem}, with $\mathscr X=[\boldsymbol a,\boldsymbol b]\subset \mathbb R^m$ and time interval $[0,1]$, but different from the annealed process of Theorem~\ref{annealed theorem}, consider the conditional intensity density $\bar\lambda_t^d(\cdot)$ of the quenched process $\bar N^d$ given by 
\begin{equation}\label{cond int dens of quenched prelimit}
\bar \lambda^d_t(x)=\tilde\lambda^d_\infty(x)+\sum_{\substack{(s,y,\tilde B^d_{xy})\in \bar N^d\\s<t}}\tilde B^d_{xy}(s)\tilde Z^d(x,y)h(t-s),
\end{equation} with parameters as defined in \eqref{lambdamulti}, \eqref{Bmulti} and \eqref{Zmulti}. 
Then, using the metric $\mathfrak d$ from Definition~\ref{metric on spatiotemporal}, it holds that $\bar N^d\stackrel{\mathrm{ucp}}\longrightarrow N$, as $d\to\infty$. 
Here, the uniform convergence \emph{in probability} is also w.r.t.\ the realization of the connectivity graph.
\end{theorem}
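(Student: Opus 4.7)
The strategy parallels that of Theorem~\ref{annealed theorem}, with the additional challenge of a random, rather than averaged, connectivity structure. I would first apply the triangle inequality $\mathfrak d(\bar N^d,N)\leq \mathfrak d(\bar N^d,\tilde N^d)+\mathfrak d(\tilde N^d,N)$. The second summand tends to zero in probability by Theorem~\ref{annealed theorem}, so it suffices to show that $\mathfrak d(\bar N^d,\tilde N^d)\to 0$ in probability, jointly over the graph randomness $Z=(Z_{ij}^d)$ and the point-process randomness.

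I would then couple $\bar N^d$ and $\tilde N^d$ on a common probability space by first sampling the Bernoulli edges $Z_{ij}^d\sim\mathrm{Bernoulli}(W_{ij}^d)$ and then, conditional on $Z$, driving both processes through shared immigrant Poisson random measures together with maximally coupled first-generation-offspring Poisson random measures per cell $\mathscr X_i^d$. By Lemma~\ref{lemma offspring size}, for $d$ large enough $\tilde N^d$ is stable with expected cluster size uniformly bounded by some $\mathfrak K<\infty$; the same bound holds for $\bar N^d$ after averaging over the graph randomness, by the tower property. Following the branching decomposition used in the proof of Theorem~\ref{annealed theorem}, the $\mathfrak d$-discrepancy splits additively across the generations of the cluster tree, with the per-event, per-cell first-generation discrepancy governed by the factor $B_{ij}^d(Z_{ij}^d-W_{ij}^d)$.

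The key probabilistic input is the concentration of the aggregated signed offspring rate. Since $B_{ij}^d\leq C_B\,\mathrm{Leb}^m(\mathscr X_i^d)=O(\mathrm{mesh}(\mathcal P^d))$ and the $Z_{ij}^d$ are independent Bernoullis, Chebyshev gives $\mathrm{Var}\bigl(\sum_i B_{ij}^d(Z_{ij}^d-W_{ij}^d)\bigr)=\sum_i(B_{ij}^d)^2W_{ij}^d(1-W_{ij}^d)=O(\mathrm{mesh}(\mathcal P^d)\,|\mathscr X|)\to 0$ as $d\to\infty$. Combining this concentration with the uniform cluster-size bound $\mathfrak K$ and the finite expected immigrant count $\alpha$ over $[0,1]$, exactly as in the three-step accounting used in the proof of Theorem~\ref{annealed theorem}, the total expected $\mathfrak d$-discrepancy vanishes and Markov's inequality closes the argument.

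The main obstacle is that the per-edge discrepancy $|Z_{ij}^d-W_{ij}^d|$ does not vanish pointwise; it is inherently of order one. The proof must therefore carefully exploit that the Hawkes dynamics interact with the graph only through weighted aggregates over target cells, and that the weights $B_{ij}^d=O(\mathrm{mesh}(\mathcal P^d))$ are small enough to render the aggregated fluctuations negligible. In essence, one must show that the law-of-large-numbers effect across the many independent edges dominates the branching-induced variability, which is enabled by the stable regime $\rho(T_{\mathrm{hom}})<1$ together with the uniformly bounded first-generation offspring rates secured by Assumption~\ref{ass cw cb} and Lemma~\ref{lemma offspring size}.
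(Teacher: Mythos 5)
Your proposal diverges from the paper's approach in a fundamental way, and the divergence hides a genuine gap. The paper does not attempt a pathwise coupling of $\bar N^d$ and $\tilde N^d$ at all. Instead it observes that, on the high-probability event $A_d$ that no cell $\mathscr X_n^d$ contains more than one point of $N$ (and hence of $\tilde N^d$, once $\mathfrak d(\tilde N^d,N)$ is small), the Bernoulli edge-sampling followed by full excitation is \emph{distributionally identical} to excitation scaled by $\tilde W_{ij}^d$, by the Poisson thinning property. It then chains $\mathbb P(\{\mathfrak d(\bar N^d,N)\leq\delta\}\cap A_d)=\mathbb P(\{\mathfrak d(\tilde N^d,N)\leq\delta\}\cap A_d)$, and concludes via Theorem~\ref{annealed theorem}. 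No concentration estimate over edges appears, because none is needed: the argument is an equality in law, not a closeness in metric.

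The gap in your route is the following. You correctly identify that the per-edge discrepancy $|Z_{ij}^d-W_{ij}^d|$ is of order one and hope that aggregation across cells repairs this, and your variance computation $\mathrm{Var}\bigl(\sum_i B_{ij}^d(Z_{ij}^d-W_{ij}^d)\bigr)=O(\mathrm{mesh}(\mathcal P^d)^m)$ for the \emph{signed} aggregate is correct. But the expected $\mathfrak d$-discrepancy of a maximal coupling of the first-generation offspring of $\bar N^d$ and $\tilde N^d$ from a common parent in cell $j$ is governed by the total-variation-type quantity
\begin{equation*}
\|h\|_{L^1(\mathbb R_+)}\int_{\mathscr X}\bigl|\tilde B^d_{xy}\tilde Z^d(x,y)-\tilde B^d_{xy}\tilde W^d(x,y)\bigr|\,\mathrm dx
=\|h\|_{L^1(\mathbb R_+)}\sum_i B_{ij}^d\,\bigl|Z_{ij}^d-W_{ij}^d\bigr|,
\end{equation*}
i.e.\ the \emph{unsigned} aggregate, and this does not vanish: its mean is $\sum_i B_{ij}^d\cdot 2W_{ij}^d(1-W_{ij}^d)$, which converges to $2\|h\|_{L^1(\mathbb R_+)}^{-1}$ times a strictly positive constant whenever $W(\cdot,y)$ is not $\{0,1\}$-valued. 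The sign cancellations that Chebyshev exploits are precisely what a pathwise coupling cannot use, because each non-cancelled event appears in one process and not the other and contributes at least $1$ to $\mathfrak d$. Consequently $\mathfrak d(\bar N^d,\tilde N^d)$ does not tend to zero under your coupling, and your triangle-inequality reduction fails at its first step. You need the paper's distributional-equality device (or something functionally equivalent that avoids pathwise coupling), which is why the restriction to at most one event per cell is essential.
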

\begin{proof}
As observed before, we may assume without loss of generality that $C_W\leq1$.
Fix $\delta,\epsilon>0$. 
Let $\tilde N^d$ denote the annealed prelimit on a weighted complete directed graph of Theorem~\ref{annealed theorem}, so that $\bar N^d$ is the quenched prelimit on a looped directed graph sampled from this complete graph. 
The idea of this proof is to argue that for large $d$, with high probability, no set $\mathscr X_n^d$ from $\mathcal P^d$ contains more than one event from $\tilde N^d$, and conditionally on this high probability event, the probabilistic behavior (including stability) of $\bar N^d$ is the same as that of $\tilde N^d$. 
Indeed, suppose that there is at most one event in each set $\mathscr X_n^d$. 
Given an arrival in some $\mathscr X_j^d\ni y$, first sampling an edge $ Z_{ij}^d\sim\mathrm{Bernoulli}( W_{ij}^d)$ to $\mathscr X_i^d\ni x$, and in case of an edge $(i,j)$ generating the offspring according to $\tilde B_{xy}^d(s)h(\cdot-s)$, is equivalent to thinning the increase of intensity by $\tilde W_{ij}^d$, i.e., to generating offspring according to $\tilde B_{xy}^d(s)\tilde W_{ij}^dh(\cdot-s)$.

To prove ucp convergence, let $\delta\in(0,1)$. 
For such $\delta$, on the event $\{\mathfrak d(\tilde N^d,N)\leq\delta\}$, the processes $\tilde N^d$ and $N$ only have simultaneous events, by definition of $\mathfrak d$. 
Suppose that the partition $\mathcal P^d$ 
has no element $\mathscr X_n^d$ in which more than one event for $N$ occurs during $[0,1]$. 
Then, conditionally on $\{\mathfrak d(\tilde N^d,N)\leq\delta\}$, the same property holds with $N$ replaced by $\tilde N^d$, since a simultaneous event of $N$ and $\tilde N^d$ has a location within the same element $\mathscr X_n^d$ of the partition $\mathcal P^d$. 
We focus on taking $d$ sufficiently large such that this property holds for $\mathcal P^d$ and $N$.

As in Theorem~\ref{annealed theorem}, the expected cluster size of $N$ is bounded by $\mathfrak K$, hence $N$ has an expected number of events on $[0,1]$ that is bounded by $\alpha\mathfrak K<\infty$. 
Thus, $N(\omega)$, the realization of $N$ during time interval $[0,1]$, is finite a.s. 
This implies that $\ell(\omega):=\inf_{\boldsymbol x,\boldsymbol y\in N(\omega)}\|\boldsymbol x-\boldsymbol y\|>0$ a.s. 
For $d\geq1$ such that $\mathrm{mesh}(\mathcal P^d)<\ell(\omega)/2$, all elements of $N(\omega)$ are in different elements $\mathscr X_n^d$ from the partition $\mathcal P^d$. 
This property then carries over to $d'\geq d$, since $\mathrm{mesh}(\mathcal P^d)\to0$ monotonically, as $d\to\infty$, by Assumption~\ref{ass7}. 
For $d\in\mathbb N$, let
$$
A_d:=\left\{\omega\in\Omega:\mathrm{mesh}(\mathcal P^d)<\frac{\ell(\omega)}2\right\}. 
$$
Then, $\bigcup_{d\in\mathbb N}A_d=\Omega$ (up to a null set), and $A_d\uparrow\Omega$ (up to a null set). 
Hence, by continuity of probability measures, $\mathbb P(A_d)\uparrow 1$ as $d\to\infty$. 
Select $d_4\in\mathbb N$ such that $\mathbb P(A_{d_4})\geq1-\epsilon$; then for $d\geq d_4$ it holds that $\mathbb P(A_d)\geq1-\epsilon$. 
In the proof of Theorem~\ref{annealed theorem}, we showed that $\mathbb P(\mathfrak d(\tilde N^d,N)\leq\delta)\geq1-\epsilon$ for $d\geq\max\{d_1,d_2,d_3\}$. 
Hence, for $d\geq\max\{d_1,d_2,d_3,d_4\}$, $$\mathbb P(\{\mathfrak d(\tilde N^d,N)\leq\delta\}\cap A_d)\geq1-2\epsilon.$$ 
Since conditionally on $A_d$, the probabilistic behavior of $\bar N^d$ and $\tilde N^d$ is identical, we have
\begin{align*}
\mathbb P(\mathfrak d(\bar N^d,N)\leq\delta)&\geq\mathbb P(\{\mathfrak d(\bar N^d,N)\leq\delta\}\cap A_d)=\mathbb P(\{\mathfrak d(\tilde N^d,N)\leq\delta\}\cap A_d)\geq1-2\epsilon,
\end{align*} 
which establishes the ucp convergence $\bar N^d\stackrel{\mathrm{ucp}}\longrightarrow N$ as $d\to\infty$.
\end{proof}

We have shown that the graphon Hawkes process can be seen as a continuous-space limit of multivariate Hawkes processes. 
The next proposition, the proof of which is postponed to Appendix~\hyperref[app B]{B}, shows that the graphon corresponding to $N$ also occurs as the limit of the connectivity graphs corresponding $\tilde N^d$.

\begin{proposition}\label{graphon + process convergence simultaneously}
Work in the setting of Theorem~\ref{quenched theorem}, with $\mathscr X=[\boldsymbol a,\boldsymbol b]=\prod_{n=1}^d[a_n,b_n]$, and grant Assumption~\ref{ass2}.
Let $N$ be a graphon Hawkes process on $W$ with approximating multivariate Hawkes processes $(\bar N^d)_{d\in\mathbb N}$ with edges sampled from $(W_{ij}^d)_{i,j\in[d]}$, where the parameters are taken piecewise constant on the intervals of the partition $\mathcal P^d$. 
This partition can be chosen in such a way that $\mathbb P$-a.s., the connectivity graph $\mathbb G^d$ of $\bar N^d$ converges to $W$ in the cut norm (see \cite{Lovasz}), while $\bar N^d$ converges ucp to $N$, as $d\to\infty$.
\end{proposition}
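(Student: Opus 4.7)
The plan is to combine three ingredients: convergence of the stepped graphon $\tilde W^d$ to $W$; concentration of the sampled graph $\mathbb G^d$ to its stepped target; and the ucp convergence of $\bar N^d$ to $N$ from Theorem~\ref{quenched theorem}. The partition sequence would then be chosen along a Borel--Cantelli subsequence to secure almost-sure cut-norm convergence of the graph, while preserving the monotonicity of $(\mathrm{mesh}(\mathcal P^d))$ required by Assumption~\ref{ass7}.

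First, I would exploit a.e.-continuity of $W$ (Assumption~\ref{ass2}) together with boundedness by $C_W$ (Assumption~\ref{ass cw cb}). At every continuity point of $W$, the piecewise-constant average in~\eqref{Wmulti} converges to $W(x,y)$ as $\mathrm{mesh}(\mathcal P^d)\to 0$, giving $\tilde W^d\to W$ pointwise a.e. Dominated convergence then yields $\|\tilde W^d-W\|_{L^1(\mathscr X^2)}\to 0$, and since the cut norm is dominated by the $L^1$ norm, also $\|\tilde W^d-W\|_\square\to 0$.

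Second, I would interpret the sampled graph $\mathbb G^d$ through its graphon representation $\tilde Z^d$ in~\eqref{Zmulti}. Its $d^2$ blockwise entries are independent $\mathrm{Bernoulli}(\tilde W^d_{ij})$ variables, and a standard concentration estimate for $W$-random graphs (\cite{Lovasz}, Chapter~10) gives $\mathbb E\|\tilde Z^d-\tilde W^d\|_\square = O(d^{-1/2})$, and hence $\|\tilde Z^d-\tilde W^d\|_\square\to 0$ in probability. Combining this with the previous step via the triangle inequality gives $\|\tilde Z^d-W\|_\square\to 0$ in probability. Passing to a sufficiently fast-growing subsequence $(d_k)_{k\in\mathbb N}$ and applying Borel--Cantelli would upgrade this to a.s.\ convergence along $(d_k)$. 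Intermediate indices $d\in(d_{k-1},d_k)$ would be handled by inserting refinements of $\mathcal P^{d_{k-1}}$ with monotonically decreasing meshes, so that Assumption~\ref{ass7} remains intact.

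Third, the ucp statement $\bar N^d\stackrel{\mathrm{ucp}}\longrightarrow N$ follows directly from Theorem~\ref{quenched theorem} applied to this refined partition sequence, since its proof only relies on Assumption~\ref{ass7} and $\mathrm{mesh}(\mathcal P^d)\to 0$ and not on any particular choice of partition. The edge-sampling Bernoulli variables $(Z^d_{ij})$ and the Poisson random measures driving $\bar N^d$ live on a common product probability space, so the two convergences coexist on the same sample space. The main obstacle is the upgrade from in-probability to almost-sure cut-norm convergence of $\mathbb G^d$: this requires a Borel--Cantelli subsequence argument, and then a careful refinement step to preserve both the monotonicity demanded by Assumption~\ref{ass7} and the joint coupling underlying Theorem~\ref{quenched theorem}.
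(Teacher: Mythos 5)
Your route is genuinely different from the paper's, and in outline it is sound, but two steps need to be tightened before it closes.

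The paper does not go via concentration at all. It constructs the partitions $\mathcal P^d$ by sampling random coordinate-orthogonal hyperplanes and refining by always halving the largest cell, picks a representative point $x_i\in\mathscr X_i^d$ in each cell, observes that the resulting finite sets $M_d$ are \emph{well-distributed} in the sense of \cite{Lovasz} (p.~185), and then invokes an adaptation of Lov\'asz's Lemma~11.33: for an a.e.\ continuous graphon, the $W$-random graph sampled at a well-distributed vertex set converges to $W$ in cut norm a.s.\ (Assumption~\ref{ass2} is used precisely here, and the paper notes the modification needed because the sampling probabilities are block averages $W_{ij}^d$ rather than pointwise values $W(x_i,x_j)$). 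The ucp part then follows from Theorem~\ref{quenched theorem} because the construction automatically makes $\mathrm{mesh}(\mathcal P^d)$ decrease to zero. This is shorter because the hard probabilistic work is outsourced to an existing lemma.

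Your approach is more hands-on, and it can be made to work, but as written there are two gaps. First, the estimate $\mathbb E\|\tilde Z^d-\tilde W^d\|_\square=O(d^{-1/2})$ is not a consequence of $\mathrm{mesh}(\mathcal P^d)\to0$ alone: writing $\mu_i=\mathrm{Leb}^m(\mathscr X_i^d)$, the Hoeffding-plus-union-bound argument gives a bound of order $\sqrt{d}\,\sum_i\mu_i^2$, which tends to zero if and only if $\sqrt{d}\,\max_i\mu_i\to0$; this holds for (near-)equal blocks ($\mu_i\asymp1/d$) but can fail under Assumption~\ref{ass7}, which only asks that the mesh decreases, with no rate. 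You are allowed to choose the partition, so this is fixable, but it must be said explicitly. Second, the Borel--Cantelli subsequence $(d_k)$ followed by ``inserting refinements with monotonically decreasing meshes'' for intermediate $d$ does not deliver a.s.\ convergence of $\mathbb G^d\to W$ \emph{along the whole sequence}: for $d\in(d_{k-1},d_k)$ the edge variables $(Z_{ij}^d)$ are freshly sampled for the intermediate partition, and nothing controls those deviations. The cleanest repair is to drop the subsequence entirely: with roughly equal block sizes the union bound actually gives $\mathbb P(\|\tilde Z^d-\tilde W^d\|_\square>\epsilon)\leq 2\cdot4^d\exp(-c\epsilon^2 d^2)$, which is summable in $d$, so Borel--Cantelli applies directly to the full sequence and the interpolation problem disappears. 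With that fix, combined with your (correct) first step that a.e.\ continuity plus boundedness gives $\tilde W^d\to W$ in $L^1$ and hence in cut norm, your argument provides a valid and more self-contained alternative to the paper's appeal to Lov\'asz's Lemma~11.33.
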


\begin{remark}
The construction of this subsection essentially works with digraphons and looped, directed graphs $\mathbb G^d$. 
If we intend to use `standard' graphons and simple graphs, the analysis of this section can be adapted in order to sample edges only for $i<j$.
\end{remark}

\section{Large-time behavior}\label{large time behavior section}
We now specialize to a graphon Hawkes process with spatial coordinates in $$(\mathscr X,\mathscr A,\mu)=([0,1],\mathcal B[0,1],\mathrm{Leb}|_{[0,1]}).$$ 
This is for legibility only: as is easily verified, all arguments in this section and in Section~\ref{section6 fixed point theorems} only rely on $\mathscr X$ being a compact subset of an Euclidean space $\mathbb R^m$, and in particular do not require $\mathscr X$ to be one-dimensional. 

We investigate the large-time behavior of the graphon Hawkes process. 
First, we prove a functional law of large numbers (FLLN) in the stable case where $\rho(T_{\mathrm{hom}})<1$, and show that the corresponding prelimit diverges in the case $\rho(T_{\mathrm{hom}})>1$, thus providing a dichotomy between these two cases. 
Having established the FLLN in the stable case, we next prove a functional central limit theorem (FCLT) for the number of events $(N_t(A))_{t\geq0}$ for $A\in\mathcal B[0,1]$. 
We establish those large-time results with the aid of limit theorems for finite-dimensional Hawkes processes.
 
To obtain explicit results, we work in this section with \emph{unmarked} processes, i.e., we set $B\equiv1$ in \eqref{linearcondintsection2}. 
Our approach is to approximate $N$ by multivariate Hawkes processes $\tilde N^d$ using Theorem~\ref{annealed theorem}, to which we apply explicit limit theorems for the unmarked case from \cite{Bacry}.
In principle, it is also possible to consider functional limit theorems for marked Hawkes processes. 
Such limit theorems, however, often assume univariate marked processes (\cite{Horst, Karabash}). 
If we model multivariate processes as marked univariate processes, we enforce a single common intensity function with i.i.d.\ coordinates. 
Other papers --- e.g., \cite{Fierro} --- work with multivariate processes, but obtain less explicit results.

\subsection{Functional law of large numbers}\label{LT stable case}
Under the condition $\rho(T_{\mathrm{hom}})<1$, we expect that $N_T[0,1]=\mathcal O(T)$, as $T\to\infty$, by Theorem~\ref{thm stability general X spectral radius}. 
Therefore, we consider $N_T(A)/T$ for large $T$ and for $A\in\mathcal B[0,1]$, with the objective to establish an FLLN for $\left(N_{Tv}(A)/T\right)_{v\in[0,1]}$. 
For the ergodic average of the number of events in any Borel set $A\in\mathcal B[0,1]$, we will argue that 
$$\left(\frac{N_{Tv}(A)}T\right)_{v\in[0,1]}\to(v\bar\lambda(A))_{v\in[0,1]},$$ where $\bar\lambda(A):=\int_A\bar\lambda(x)\ \mathrm dx$, with $\bar\lambda(x)$ the \emph{expected stationary arrival rate density}, meaning that it solves the Fredholm integral equation of the second kind $\bar\lambda=\lambda_\infty +T_{\mathrm{hom}}(\bar\lambda)$, i.e., 
\begin{equation}\label{fredholmeq intensity}
\bar\lambda(x)=\lambda_\infty(x)+\|h\|_{L^1(\mathbb R_+)}\int_0^1W(x,y)\bar\lambda(y)\ \mathrm dy.
\end{equation}
This is motivated by the following heuristic, supposed to hold for $T$ large and for $A\in\mathcal B[0,1]$:
\begin{align*}
\frac{\mathbb E[N_{Tv}(A)]}T&=\frac1T\int_A\int_0^{Tv}\mathbb E[\lambda_t(x)]\ \mathrm dt\ \mathrm dx\\
&=\frac1T\int_A\int_0^{Tv}\left[\lambda_\infty(x)+\int_0^1\int_0^tW(x,y)h(t-s)\mathbb E[\lambda_s(y)]\ \mathrm ds\ \mathrm dy\right]\ \mathrm dt\ \mathrm dx\\
&\approx\frac1T\int_A\int_0^{Tv}\left[\lambda_\infty(x)+\int_0^1W(x,y)\bar\lambda(y)\ \mathrm dy\ \|h\|_{L^1(\mathbb R_+)}\right]\ \mathrm dt\ \mathrm dx\\
&=\frac1T\int_A\int_0^{Tv}\bar\lambda(x)\ \mathrm dt\ \mathrm dx=v\bar\lambda(A),
\end{align*}
where in stationarity we would expect $\mathbb E[\lambda_s(y)]$ to be independent of $s$, in which case we denote this quantity by $\bar\lambda(y)$.
In the remainder of this subsection, we make this heuristic exact. 

\begin{theorem}\label{FLLN theorem}
Grant Assumptions~\ref{ass1}, \ref{ass cw cb}, \ref{ass7} and \ref{ass6}. 
Let $N=(N_t(A))_{t\in\mathbb R_+,A\in\mathcal B[0,1]}$ be an unmarked (i.e., $B\equiv1$) linear graphon Hawkes process on $\mathscr X=[0,1]$ and time interval $\mathbb R_+$, starting on an empty history, and satisfying the conditions of Theorem~\ref{thm stability general X spectral radius}.  
In particular, suppose $\rho(T_{\mathrm{hom}})<1$, where in the present setting
\begin{equation}\label{operator Thom no marks}
T_{\mathrm{hom}}:L^1[0,1]\to L^1[0,1]:f(\cdot)\mapsto\|h\|_{L^1(\mathbb R_+)}\int_0^1W(\cdot,y)f(y)\ \mathrm dy.
\end{equation} 
Let $\bar\lambda$ be defined in \eqref{fredholmeq intensity}.
Then,  
\begin{equation}
\label{FLLN for graphon Hawkes}
\sup_{v\in[0,1]}\left|\frac{N_{Tv}(A)}{T}-v\bar\lambda(A)\right|\stackrel{\mathbb P}\longrightarrow0,\quad\text{as }T\to\infty.
\end{equation}
\end{theorem}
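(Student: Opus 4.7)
The plan is to employ Theorem~\ref{annealed theorem} to approximate $N$ by finite-dimensional multivariate Hawkes processes $\tilde N^d$, and then invoke the FLLN of \cite{Bacry} for these $\tilde N^d$. A triangle inequality with three terms controls the error: writing $\tilde{\bar\lambda}^d$ for the piecewise-constant density solving the discretized Fredholm equation $\tilde{\bar\lambda}^d=\tilde\lambda_\infty^d+\tilde T^{(d)}_{\mathrm{hom}}\tilde{\bar\lambda}^d$, decompose
$$\left|\frac{N_{Tv}(A)}{T}-v\bar\lambda(A)\right|\leq\underbrace{\frac{|N_{Tv}(A)-\tilde N^d_{Tv}(A)|}{T}}_{(\mathrm{I})}+\underbrace{\left|\frac{\tilde N^d_{Tv}(A)}{T}-v\tilde{\bar\lambda}^d(A)\right|}_{(\mathrm{II})}+\underbrace{v\,\bigl|\tilde{\bar\lambda}^d(A)-\bar\lambda(A)\bigr|}_{(\mathrm{III})}.$$
The strategy is to fix $d$ large enough to render (I) and (III) small uniformly in $T$, and then to let $T\to\infty$ with this $d$ fixed to handle (II).

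For (III), Lemma~\ref{TVlemma4} yields $\tilde\lambda_\infty^d\to\lambda_\infty$ in $L^1[0,1]$, while boundedness of $W$ combined with standard piecewise-constant approximation give $\tilde W^d\to W$ in $L^1(\mathscr X^2)$, hence $\tilde T^{(d)}_{\mathrm{hom}}\to T_{\mathrm{hom}}$ strongly on $L^1$. A Neumann-series representation $\bar\lambda=\sum_{n\geq 0}T_{\mathrm{hom}}^n\lambda_\infty$ (and similarly for $\tilde{\bar\lambda}^d$), whose convergence is uniform in $d$ by Lemma~\ref{lemma offspring size}, delivers term-by-term convergence and hence $\tilde{\bar\lambda}^d\to\bar\lambda$ in $L^1[0,1]$, so (III) vanishes as $d\to\infty$. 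For (II), Lemma~\ref{lemma offspring size} guarantees that $\tilde N^d$ is a subcritical multivariate Hawkes process for $d$ large, so the FLLN of \cite{Bacry} applied coordinate-wise gives $\sup_v|\tilde N^d_{Tv,i}/T-v\mu_i^d|\stackrel{\mathbb P}{\longrightarrow}0$ as $T\to\infty$, where $\mu_i^d=\int_{\mathscr X_i^d}\tilde{\bar\lambda}^d(x)\,\mathrm dx$. Since locations within each site $\mathscr X_i^d$ are, conditionally on the count, i.i.d.\ uniform on $\mathscr X_i^d$, a standard LLN for indicators of $A\cap\mathscr X_i^d$ (with uniform-in-$v$ control via monotonicity of $v\mapsto\tilde N^d_{Tv}(A)$) yields $\sup_v|\tilde N^d_{Tv}(A)/T-v\tilde{\bar\lambda}^d(A)|\stackrel{\mathbb P}{\longrightarrow}0$ as $T\to\infty$.

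For (I), couple $N$ and $\tilde N^d$ through the shared Poisson random measure used in the proof of Theorem~\ref{annealed theorem}. Inspection of that proof shows that the expected discrepancy in the metric $\mathfrak d$ satisfies $\mathbb E[\mathfrak d(N,\tilde N^d)|_{[0,T]}]\leq C\,T\,\mathrm{mesh}(\mathcal P^d)$ with $C$ independent of $T$, since each contribution (extra events from baseline-intensity mismatch, mismatched offspring kernels, and location-mark mismatches at simultaneous events) is proportional to the expected total event count on $[0,T]$, which is $O(T)$ by subcriticality. Each non-simultaneous event contributes at least unity to $\mathfrak d$, and simultaneous events whose locations straddle $\partial A$ add only an $O(T\,\mathrm{mesh}(\mathcal P^d))$ correction; hence $\mathbb E[\sup_{v\in[0,1]}|N_{Tv}(A)-\tilde N^d_{Tv}(A)|]/T$ is at most a constant times $\mathrm{mesh}(\mathcal P^d)$, so Markov's inequality renders (I) smaller than any prescribed $\epsilon$ with arbitrarily high probability for $d$ sufficiently large, uniformly in $T$. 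Combining (I)--(III) completes the proof. The principal obstacle is exactly the $T$-linear growth of the coupling estimate from Theorem~\ref{annealed theorem}; this is overcome by the observation that the per-unit-time coupling rate vanishes with $\mathrm{mesh}(\mathcal P^d)$, which allows us to fix $d$ large first and only then send $T\to\infty$, so that the two limits are carried out in the correct order.
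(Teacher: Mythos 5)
Your proposal follows essentially the same route as the paper: the same three-term triangle-inequality decomposition, Lemma~\ref{TVlemma4} and a Neumann-series argument for term~(III), \cite{Bacry} for term~(II), and the coupling of Theorem~\ref{annealed theorem} for term~(I).

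The one place where your write-up is materially looser than the paper is term~(I). You assert by ``inspection'' that the coupling estimate of Theorem~\ref{annealed theorem} scales linearly in $T$; but Theorem~\ref{annealed theorem} is stated on $[0,1]$, and because clusters launched near the end of a unit interval spill over into the next one, the per-interval discrepancy is not literally a sum of independent copies of the $[0,1]$ bound. The paper resolves this by introducing the batched processes $M$, $\tilde M^d$ that attach each immigrant's \emph{entire} future cluster to the immigrant's arrival time, giving $\mathfrak d(N_T,\tilde N^d_T)\leq\mathfrak d(M_T,\tilde M^d_T)\leq\sum_{i=1}^{\lceil T\rceil}\mathfrak d(M_{[i-1,i)},\tilde M^d_{[i-1,i)})$ with the summands i.i.d.; this is the precise mechanism behind your claimed $O(T\,\mathrm{mesh}(\mathcal P^d))$ bound and is worth stating explicitly. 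For term~(II), you pass from the per-site counts to $\tilde N^d_{Tv}(A)$ via an LLN for the uniform placements inside each $\mathscr X^d_i$ together with monotonicity in $v$; the paper instead applies Bacry et al.\ directly to a $2d$-variate process on the coordinates $\mathscr X^d_n\cap A$ and $\mathscr X^d_n\cap A^{\mathsf c}$, which avoids the extra conditional-LLN step, but both arguments are valid.
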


\begin{proof}
The proof is structured as follows. 
We first approximate the graphon Hawkes process by $d$-dimensional Hawkes processes using Theorem~\ref{annealed theorem}; 
then we use an existing FLLN for this prelimit; 
and we demonstrate that the limit quantities for graphon Hawkes and multivariate Hawkes processes are close, for large $d$. 

Thus, we first approximate the graphon Hawkes process $N$ by processes $\tilde N^d$ on weighted graphs, described by \eqref{cond int dens of prelimit}--\eqref{Wmulti}, having constant parameters on each element $\mathscr X_n^d$ ($n\in[d]$) of the partition $\mathcal P^d$ of $[0,1]$ into $d$ intervals, with $\mathrm{mesh}(\mathcal P^d)\to0$ as $d\to\infty$. 
Denote the corresponding integral operators by $\tilde T_{\mathrm{hom}}^{(d)}$.
By Lemma~\ref{lemma offspring size} (which requires Assumptions~\ref{ass1}, \ref{ass cw cb}, \ref{ass7} and \ref{ass6}), we can find some $D\in\mathbb N$ such that for $d\geq D$, $\tilde N^d$ is stable with $\rho(\tilde T_{\mathrm{hom}}^{(d)})\leq \rho(T_{\mathrm{hom}})+\delta<1$, where $\delta:=\frac12(1-\rho(T_{\mathrm{hom}}))$, and such that the expected cluster size $\mathbb E[Z_x^d]$ for $\tilde N^d$ of a particle in $x\in[0,1]$ is bounded by $\mathfrak K<\infty$, uniformly over $x$ and $d\geq D$. 
This means that the FLLN \cite{Bacry}, Theorem~1, holds for $\tilde N^d$, for all $d\geq D$.

For $A\in\mathcal B[0,1]$,
let $(N_{t}(A))_{t\geq0}$ be the simple counting process keeping track of the number of points in $A$ over time. 
Let $\tilde K^d=(\|h\|_{L^1(\mathbb R_+)}W_{ij}^d)_{i,j\in[d]}$ (see \eqref{cimm3} and \cite{Bacry}, Assumption~(\textbf{A1})), and let $I_d$ be the $d\times d$ identity matrix.
Then it holds that 
\begin{align}\label{FLLN 1eq}
\sup_{v\in[0,1]}\left|\frac{N_{Tv}(A)}{T}-v\bar\lambda(A)\right|&\leq\sup_{v\in[0,1]}\left|\frac{N_{Tv}(A)}{T}-\frac{\tilde N^d_{Tv}(A)}{T}\right|\\
&\ +\sup_{v\in[0,1]}\left|\frac{\tilde N^d_{Tv}(A)}{T}-v(I_d-\tilde K^d)^{-1}\tilde\lambda_\infty^d(A)\right|\label{FLLN 2eq}\\
&\ +\sup_{v\in[0,1]}\left|v(I_d-\tilde K^d)^{-1}\tilde\lambda_\infty^d(A)-v\bar\lambda(A)\right|,\label{FLLN 3eq}
\end{align}
where 
$(I_d-\tilde K^d)^{-1}\tilde\lambda_\infty^d(A):=\int_A((I_d-\tilde K^d)^{-1}\tilde\lambda_\infty^d)(x)\ \mathrm dx$.
We aim to prove that the left-hand side of the inequality in the above display goes to $0$ in probability as $T\to\infty$, by first selecting $\bar d$ large enough such that the right-hand side of \eqref{FLLN 1eq} and \eqref{FLLN 3eq} are small for $d\geq\bar d$, independently of $T\geq1$, and next selecting $T$ large enough such that \eqref{FLLN 2eq} is small.

Fix $\epsilon,\kappa\in(0,1)$. 
We start by bounding \eqref{FLLN 1eq}. 
Suppose that $T\geq1$.  
Let $\mathfrak d(\cdot,\cdot)$ be the metric from Definition~\ref{metric on spatiotemporal}. 
If $\mathfrak d(N_{T},\tilde N^d_{T})<2T\epsilon'$, then there are fewer than $2T\epsilon'$ non-simultaneous events on $[0,T]$ for both processes, and this leads to the bound $$\sup_{v\in[0,1]}\left|\frac{N_{Tv}(A)}{T}-\frac{\tilde N^d_{Tv}(A)}{T}\right|<2\epsilon'.$$ 
Hence, we focus on establishing that $\mathfrak d(N_{T},\tilde N^d_{T})<2T\epsilon'$. 
To this end, let $M$ be a process corresponding to $N$, having the same immigrant streams of events $I_1<I_2<\cdots$, but where to each immigrant $I_k$ arriving at time $t$, at this time $t$ we immediately attach the immigrant's complete offspring $O_k\subset(t,\infty)$. 
More precisely, $M_T=\bigcup_{I_k\leq T}O_k$. 
Define $\tilde M^d$ similarly through $\tilde N^d$.
This means that the event times of $M_T,\tilde M^d_T$ are discrete subsets of $[0,\infty)$, marked by discrete subsets of $(t,\infty)$ for an immigrant arrival at time $t$. 
Informally, $M$, $\tilde M^d$ consist of the immigrant streams of $N$, $\tilde N^d$, with each immigrant replaced by a batch of size equal to the whole cluster corresponding to that immigrant.

Let $D$ be the process defined through $D_T=\bigcup_{I_k\leq T}(O_k\cap(T,\infty))$, where $(I_k)_{k\in\mathbb N}$, $(O_k)_{k\in\mathbb N}$ are the immigrant times and offspring sets corresponding to $N$; let $D^d$ be defined similarly through $N^d$. 
Then we have, a.s.,
\begin{equation}
\mathfrak d\left(N_{T},\tilde N^d_{T}\right)\leq\mathfrak d\left(N_{T},\tilde N^d_{T}\right)+\mathfrak d\left(D_{T},\tilde D^d_{T}\right)=\mathfrak d\left(M_{T},\tilde M^d_{T}\right).
\end{equation}
Furthermore, writing $M_{A}$ for the process only recording the immigrants arriving in $A\in\mathcal B(\mathbb R_+)$ --- meaning that $M_T\equiv M_{[0,T]}$ ---, we easily obtain the a.s.\ bound 
\begin{equation}
\mathfrak d\left(M_{T},\tilde M^d_{T}\right)\leq\sum_{i=1}^{\lceil T\rceil}\mathfrak d\left(M_{[i-1,i)},\tilde M^d_{[i-1,i)}\right).
\end{equation}
Next, note that clusters arrive independently over time, and that the clusters themselves are i.i.d., modulo the time shift corresponding to the immigrant arrival time. 
That is, $(M_{[i-1,i)},\tilde M^d_{[i-1,i)})_{i\in\mathbb N}$ represent i.i.d.\ random sets. 
Furthermore, by the proof of Theorem~\ref{annealed theorem}, we can find $d_1\geq D$ such that, for $d\geq d_1$, the expectation of $\mathfrak d(M_{[i-1,i)},\tilde M^d_{[i-1,i)})$ can be bounded by $\epsilon'$. 
Hence, for $d\geq d_1$, 
$\mathfrak d(N_{T},\tilde N^d_{T})$ is bounded in expectation by $\lceil T\rceil \epsilon'\leq2T\epsilon'$, thus, independently of $T\geq1$, $$\sup_{v\in[0,1]}\left|\frac{N_{Tv}(A)}{T}-\frac{\tilde N^d_{Tv}(A)}{T}\right|$$ is bounded in expectation by $2\epsilon'$. 
Choose $\epsilon'=\epsilon\kappa/12$. 
Markov's inequality implies that with probability of at least $1-\kappa/2$, \eqref{FLLN 1eq} is smaller than $\epsilon/3$. 

We now bound \eqref{FLLN 3eq}. 
To this end, note that the solution $\bar\lambda$ to a Fredholm integral equation of the second kind depends continuously on the initial conditions, whenever there exists a unique solution; see \cite{Integral equations book}, Corollary~9.3.12. 
This can be guaranteed under the condition $\rho(T_{\mathrm{hom}})<1$, since then $\bar\lambda$ as defined in \eqref{fredholmeq intensity} can be expressed by a Liouville-Neumann series: \begin{equation}\label{LiouvilleNeumann}\bar\lambda=\sum_{n\geq0}T_{\mathrm{hom}}^n\lambda_\infty,\end{equation} 
which is in $L^1[0,1]$ by Gelfand's formula.
The continuous dependence of the solution $\bar\lambda$ on the initial conditions means that we can find $\eta>0$, such that  
\begin{equation}\label{twoconditions}\|\lambda_\infty-\tilde\lambda_\infty^d\|_{L^1[0,1]}<\eta\quad\text{and}\quad\|h\|_{L^1(\mathbb R_+)}\|W-\tilde W^d\|_{L^1[0,1]^2}<\eta\end{equation} 
imply that $$\|(I_d-\tilde K^d)^{-1}\tilde\lambda_\infty^d-\bar\lambda\|_{L^1[0,1]}<\frac\epsilon3.$$ 
Note that $(I_d-\tilde K^d)^{-1}\tilde\lambda_\infty^d$ is the unique solution to $f=\tilde\lambda_\infty^d +\tilde T_{\mathrm{hom}}^d(f)$. 
By Lemma~\ref{TVlemma4}, there exists $d_2\geq D$ such that \eqref{twoconditions} holds for all $d\geq d_2$.  
Hence, for such $d$, \eqref{FLLN 3eq} is bounded by $\epsilon/3$.

Finally, given $d:=d_1\vee d_2$, we need to choose $T$ sufficiently large to make  \eqref{FLLN 2eq} small.
We achieve this by applying \cite{Bacry}, Theorem~1, to a multivariate Hawkes process on $2d$ coordinates corresponding to the sets $\mathscr X_n^d\cap A$ and $\mathscr X_n^d\cap A^\mathsf{c}$, $n\in[d]$, with equal parameters on the coordinates corresponding to $\mathscr X_n^d\cap A$ and $\mathscr X_n^d\cap A^\mathsf{c}$, computed according to \eqref{cond int dens of prelimit}--\eqref{Wmulti}. 
Note that \cite{Bacry}, Theorem~1, uses the Euclidean norm on $\mathbb R^d$.  
However, as all norms on a finite-dimensional vector space are equivalent, their result holds for the $\ell^1$-norm as well. 
It follows that \eqref{FLLN 2eq}, which is bounded by its $\ell^1$-norm over the $2d$ coordinates, converges to $0$ in probability as $T\to\infty$. 
In other words, we can find $T^*\geq1$ such that for $T\geq T^*$ it holds that, with probability of at least $1-\kappa/2$, the expression in \eqref{FLLN 2eq} is bounded by $\epsilon/3$.

Now we use our choice of $d$ to combine the bounds for \eqref{FLLN 1eq}--\eqref{FLLN 3eq}. 
Then, for all $T\geq T^*$, 
\[\mathbb P\left(\sup_{v\in[0,1]}\left|\frac{N_{Tv}(A)}{T}-v\bar\lambda(A)\right|<\epsilon\right)\geq1-\kappa.\qedhere 
\]
\end{proof}

We proceed to the case $\rho(T_{\mathrm{hom}})>1$, where we expect $N_T(A)/T\to\infty$ a.s., as $T\to\infty$, for each $A\in\mathcal B[0,1]$ of positive Lebesgue measure. 
We prove this claim formally below. 
First, however, we provide some heuristics. 
By Gelfand's formula, we have
$$\rho(T_{\mathrm{hom}})=\lim_{n\to\infty}\|T_{\mathrm{hom}}^n\|^{1/n}=\inf_{n\in\mathbb N}\|T_{\mathrm{hom}}^n\|^{1/n}>1.$$ 
This implies (see Proposition~\ref{app3prop} in Appendix~\hyperref[app C]{C}) the existence of some $g\in L^1_+[0,1]$ with $\|g\|_{L^1[0,1]}=1$ such that $\|T^n_{\mathrm{hom}}g\|_{L^1[0,1]}\to\infty$ as $n\to\infty$. 
Hence, if we can find some (Poisson) stream of particles with random spatial locations having probability density $g$, then each of those particles has infinite expected offspring, rendering the system unstable. 
This heuristic, however, while insightful, does not directly lead to a formal proof; see Proposition~\ref{heuristic does not work} in Appendix~\hyperref[app C]{C} for details. 
To prove our divergence claim, we follow a different route, for which we need some assumptions. 
\begin{assumption}\label{ass9}
Assume that $\alpha=\|\lambda_\infty\|_{L^1[0,1]}>0$. 
Furthermore, assume that the time from the birth of a parent to the birth of the child has finite expectation, i.e., $\int_0^\infty th(t)\ \mathrm dt<\infty$. 
Next, let 
\begin{equation}
W^{(k)}(x,y):=\int_0^1\cdots\int_0^1W(x,x_1)\cdots W(x_{k-1},y)\ \mathrm dx_1\cdots\mathrm dx_{k-1},
\end{equation}
and assume that there is some $k\in\mathbb N$ such that $W^{(k)}>0$ 
on all of $[0,1]$, and such that $W^{(k)}$ is bounded away from zero, i.e., that there exists some $\kappa\in(0,1]$ such that $\inf_{x,y\in[0,1]}W^{(k)}(x,y)\geq\kappa$.
Finally, assume that $W$ is symmetric, i.e., $W(x,y)\equiv W(y,x)$.
\end{assumption}
The condition $W^{(k)}>0$ means that the system is \emph{mixing} after $k$ generations; if $W^{(k)}$ is bounded away from $0$, then this mixing is \emph{uniform}. 
We need a mixing assumption to argue that explosive behavior in one part of the space $\mathscr X$ spills over to the rest of the space.

\begin{theorem}\label{FLLN theorem 2}
Grant Assumptions~\ref{ass1}, \ref{ass cw cb} and \ref{ass9}. Let $N$ 
be an unmarked linear graphon Hawkes process on $\mathscr X=[0,1]$ and time interval $\mathbb R_+$, starting with an empty history. 
Suppose that we are in the regime $\rho(T_{\mathrm{hom}})>1$, with $T_{\mathrm{hom}}$ defined in \eqref{operator Thom no marks}. 
Then, for each $A\in\mathcal B[0,1]$ of positive Lebesgue measure, it holds that 
\begin{equation}
\frac{N_T(A)}T\to\infty\quad \text{a.s., as }T\to\infty.
\end{equation}
\end{theorem}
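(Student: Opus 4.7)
The argument rests on the cluster representation (Definition~\ref{clusterdef}) together with supercritical multi-type branching theory. Immigrants arrive as a homogeneous Poisson process with total intensity $\alpha=\|\lambda_\infty\|_{L^1[0,1]}>0$, and each generates an independent cluster that, in law, is a Crump--Mode--Jagers (CMJ) branching process on type space $[0,1]$ in which a particle of type $y$ begets type-$x$ offspring at age-rate $W(x,y)h(\tau)$. Its first-moment semigroup is governed by $T_{\mathrm{hom}}$, whose spectral radius $\rho>1$ makes the process supercritical; the mean-delay condition $\int_0^\infty t\,h(t)\,\mathrm dt<\infty$ in Assumption~\ref{ass9} secures the regularity required for CMJ limit theory. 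I will use the Malthusian parameter $\alpha_M>0$ defined as the unique positive root of $\hat h(\alpha_M)=\|h\|_{L^1(\mathbb R_+)}/\rho$, which exists by continuity and strict monotonicity of $\hat h$.

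Since $W$ is symmetric and bounded, $T_{\mathrm{hom}}$ is a compact self-adjoint operator on $L^2[0,1]$. The mixing assumption $W^{(k)}\geq\kappa>0$ implies that $T_{\mathrm{hom}}^k$ is strictly positivity-improving, so by Krein--Rutman/Jentzsch the leading eigenvalue $\rho$ admits a positive eigenfunction $\phi$ bounded between two positive constants. The same irreducibility, combined with supercriticality, guarantees that the extinction probability $q(y)$ for the cluster initiated by a type-$y$ ancestor is bounded uniformly away from $1$; write $p:=1-\sup_{y\in[0,1]}q(y)>0$ for the resulting uniform survival probability.

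On the survival event, the plan is to show that the cluster contains infinitely many points of $A$ almost surely. For any particle of type $y$, the expected number of its $k$-th generation descendants lying in $A$ equals $\|h\|_{L^1(\mathbb R_+)}^k\int_A W^{(k)}(x,y)\,\mathrm dx\geq\kappa\,\|h\|_{L^1(\mathbb R_+)}^k\,|A|>0$, uniformly in $y$. Along an infinite line of descent---which exists a.s.\ on the survival event---one can extract an infinite sequence of disjoint $k$-generation subtrees whose conditional probability of placing at least one particle in $A$ is uniformly bounded below, and a Borel--Cantelli/second-moment argument then forces infinitely many cluster points into $A$ almost surely on survival.

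Finally, let $C_k(s)$ denote the number of points of cluster $k$ that fall in $A$ within $s$ time units of the cluster's immigrant birth. The previous step gives $C_k(s)\to\infty$ a.s.\ on the survival event, so for any $M>0$ one may pick $S_0$ with $\mathbb P(C_k(S_0)\geq M)\geq p/2$. For $T\geq 2S_0$, Poisson thinning and the strong law of large numbers applied to the i.i.d.\ clusters yield that the number of immigrants arriving in $[0,T/2]$ whose clusters reach level $M$ within $S_0$ is eventually at least $\alpha p T/8$ a.s. Each such cluster contributes at least $M$ to $N_T(A)$, so $\liminf_{T\to\infty} N_T(A)/T\geq M\alpha p/8$ almost surely; letting $M\to\infty$ yields the claim. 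The main technical obstacle is executing the CMJ theory rigorously---uniformly positive survival and the a.s.\ infinitude of cluster points in $A$---on the uncountable type space $[0,1]$; the spectral setup and the concluding LLN step are comparatively routine.
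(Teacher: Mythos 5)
Your argument reaches the right conclusion but takes a genuinely different route from the paper, and the two hardest branching-theoretic ingredients you invoke are asserted rather than established. The paper's proof deliberately \emph{avoids} any survival-probability estimate: using compactness and self-adjointness of $T_{\mathrm{hom}}$ on $L^2$, Krein--Rutman gives a positive eigenfunction $\phi$, which is then shown to be \emph{essentially bounded}, so that $\mathbf 1 \geq \phi/\|\phi\|_{L^\infty}$ and hence $\|T^n_{\mathrm{hom}}\mathbf 1\|_{L^1[0,1]}\to\infty$; Assumption~\ref{ass9} is then used to thin the immigrant stream into a Poisson stream of rate $\theta=\alpha\kappa(1-e^{-\|h\|_{L^1(\mathbb R_+)}})^k>0$, each atom of which produces (after $k$ generations) a uniformly distributed particle whose total expected offspring $\sum_n\|T^n_{\mathrm{hom}}\mathbf 1\|_{L^1}$ is \emph{infinite}; and the conclusion is the strong LLN for i.i.d.\ nonnegative summands with infinite mean. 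That argument never needs positive survival probability, let alone uniform positivity over the type space. Your route, by contrast, hinges on two claims you flag as ``the main technical obstacle'': (i) $p:=1-\sup_{y}q(y)>0$ (uniformly positive survival over an uncountable type space), and (ii) on survival the cluster a.s.\ places infinitely many points in $A$, via a spine decomposition plus Borel--Cantelli. Both are true here (essentially because Assumption~\ref{ass9} forces $D(y)=\int_0^1 W(x,y)\,\mathrm dx\geq\kappa/C_W^{k-1}>0$, so the $k$-step subprocess dominates a fixed supercritical process after a uniform thinning, and the off-spine side-branches at every $k$-th spine step hit $A$ independently with uniformly positive conditional probability), but they require precisely the kind of explicit coupling/thinning construction the paper builds directly, so your route does not actually save the hard work --- it just relocates it to multitype CMJ theory on a continuum type space, where the textbook results are less directly citable. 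What your version does buy is that the final step (fix $M$, pick $S_0$ with $\mathbb P(C_k(S_0)\geq M)\geq p/2$, thin the Poisson immigrant stream, apply the SLLN, send $M\to\infty$) is more explicit about the time truncation than the paper's somewhat terse appeal to ``SLLN for a compound Poisson process with infinite expected claim size''; this is a genuine improvement in rigor at that point. Two smaller remarks: the Malthusian parameter $\alpha_M$ you set up is never used anywhere in the argument and should be dropped; and the passage from ``$N_T[0,1]/T\to\infty$'' to ``$N_T(A)/T\to\infty$ for every positive-measure $A$'' still needs the mixing hypothesis explicitly (as the paper notes at the very end), which your Borel--Cantelli step does supply, so in this respect your write-up is actually complete where the paper invokes Assumption~\ref{ass9} a second time without detail.
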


\begin{proof}
The proof is structured as follows. 
We first use the condition $\rho(T_{\mathrm{hom}})>1$ to find a function $\phi\in L^1[0,1]\cap L^\infty[0,1]$ such that $\|T^n_{\mathrm{hom}}\phi\|_{L^1[0,1]}\to\infty$. 
Since this function is \emph{bounded}, the same should hold for the unit profile $\mathbf1:x\mapsto1$. 
Then we use Assumption~\ref{ass9} to find a stream of immigrant particles that eventually lead to some uniformly distributed particle in $[0,1]$. 
Since the unit profile has infinite expected offspring, the same holds for our uniformly distributed particle. 
We conclude by applying the strong LLN.

Since $C_W<\infty$ by Assumption~\ref{ass cw cb}, we may apply Luzin's theorem to approximate the measurable function $W\in L^1[0,1]^2$ by continuous functions, and it follows by standard arguments that $T_{\mathrm{hom}}$ is a compact operator. 
Next, since $W$ is symmetric, it follows that when treated as an operator on $L^p[0,1]$, $p\in[1,\infty)$, $T_{\mathrm{hom}}$ is self-adjoint; see \cite{Conway}, Example VI.1.6. 
Hence, $T_{\mathrm{hom}}\in B(L^2[0,1])$ is self-adjoint, where $B(L^p[0,1])$ is the set of bounded linear operators on $L^p[0,1]$. 
For $T\in B(L^p[0,1])$, its spectrum is defined by $$\sigma^{L^p}(T):=\{\lambda\in\mathbb C:T-\lambda I\in B(L^p[0,1])\text{ is not invertible}\}.$$ 
Note that $L^2[0,1]\subset L^1[0,1]$, hence if $T_{\mathrm{hom}}-\lambda I$ is not invertible on $L^1[0,1]$, then it is not invertible on $L^2[0,1]$ either. 
Thus, $\sigma^{L^1}(T_{\mathrm{hom}})\subset\sigma^{L^2}(T_{\mathrm{hom}})$. 
Since the spectrum of a self-adjoint operator on a Hilbert space is a subset of the real line (\cite{Conway}, Proposition~VII.6.1), $\sigma(T_{\mathrm{hom}}):=\sigma^{L^1}(T_{\mathrm{hom}})\subset\mathbb R$. 

Next, it follows from the compactness of $T_{\mathrm{hom}}\in B(L^1[0,1])$ and \cite{Conway}, Theorem~VII.7.1, that $T_{\mathrm{hom}}$ has $\rho(T_{\mathrm{hom}})$ as an eigenvalue; denote the corresponding normalized eigenfunction as $\phi$. 
By the Krein-Rutman theorem (\cite{Kreinrutman}, Theorem~V.6.6) --- which applies by symmetry of $W$ and by the assumption that $W^{(k)}>0$ for some $k\in\mathbb N$ --- we can even select a normalized eigenfunction satisfying $\phi>0$ a.e. 
In other words, $\phi$ is a probability density on $[0,1]$. 
Furthermore, it holds that $\phi$ is essentially bounded, which follows from
\begin{align*}
\|\phi\|_{L^\infty[0,1]}&=\frac1{\rho(T_{\mathrm{hom}})}\|T_{\mathrm{hom}}\phi\|_{L^\infty[0,1]}=\frac1{\rho(T_{\mathrm{hom}})}\sup_{x\in[0,1]}\|h\|_{L^1(\mathbb R_+)}\int_0^1W(x,y)\phi(y)\ \mathrm dy\\
&\leq \frac1{\rho(T_{\mathrm{hom}})}C_W\|h\|_{L^1(\mathbb R_+)}\|\phi\|_{L^1[0,1]}.
\end{align*}
Hence, we can bound $\phi$ by the constant function $\|\phi\|_{L^\infty[0,1]}\times\mathbf1:x\mapsto\|\phi\|_{L^\infty[0,1]}$, which is in $L^1[0,1]$. 
For this unit profile $\mathbf1:x\mapsto 1$, we have $$\|T^n_{\mathrm{hom}}\mathbf1\|_{L^1[0,1]}=\frac{\left\|T^n_{\mathrm{hom}}\|\phi\|_{L^\infty[0,1]}\mathbf1\right\|_{L^1[0,1]}}{\|\phi\|_{L^\infty[0,1]}}\geq\frac{\|T^n_{\mathrm{hom}}\phi\|_{L^1[0,1]}}{\|\phi\|_{L^\infty[0,1]}}\to\infty,\quad\text{as }n\to\infty.$$ 

Our goal is to use Assumption~\ref{ass9} to construct a Poisson stream of uniformly distributed particles on $[0,1]$ --- i.e., distributed according to the unit profile  ---, each of which has infinite expected offspring size. 

Using Assumption~\ref{ass9}, there is a Poisson stream of intensity $$\theta:=\alpha\kappa(1-\exp(-\|h\|_{L^1(\mathbb R_+)}))^k>0$$ leading after $k$ generations (i.e., eventually) to some \emph{uniformly distributed} point in $[0,1]$. Indeed, we can bound the stream of events by a coupled stream where a $k$th-generation event in $x$ originating from an immigrant in $y$ is rejected with probability $1-\kappa/W^{(k)}(x,y)$.
Such a uniformly distributed event within $[0,1]$ has infinite expected offspring; here the expectation is w.r.t.\ both the location within $[0,1]$ and w.r.t.\ the offspring realization. 

Since the time between the birth of a parent and the birth of a child has the finite expectation $\int_0^\infty th(t)\ \mathrm dt<\infty$, we infer that $N_T[0,1]/T\to\infty$ a.s., as $T\to\infty$; this follows by applying the strong LLN to a compound Poisson process with arrival rate $\theta$ and infinite expected claim size. 
By invoking Assumption~\ref{ass9} again, it follows that, for any $A\in\mathcal B[0,1]$ of positive Lebesgue measure, we have $N_T(A)/T\to\infty$ a.s., as $T\to\infty$.
\end{proof}

\begin{remark}
Having stated limit theorems both in the subcritical ($\rho(T_{\mathrm{hom}})<1$) and supercritical ($\rho(T_{\mathrm{hom}})>1$) regime, we ask ourselves what happens in the critical ($\rho(T_{\mathrm{hom}})=1$) regime. 
By the Poisson cluster representation of the graphon Hawkes process, this is closely related to the critical behavior of Galton-Watson processes, i.e., one would expect that in the critical regime $N_T(A)$ is of order $T^2$. 
Indeed, asymptotics of this type for \emph{univariate} Hawkes processes can be found in \cite{Zhu2}, Theorem~21. 
\end{remark}

\subsection{Functional central limit theorem}
Having proved the FLLN given in \eqref{FLLN for graphon Hawkes}, a natural question is whether we can establish an FCLT for the graphon Hawkes process. 
We continue working in the unmarked setting.

As we have seen before, when we consider a graphon Hawkes process with piecewise constant parameters, treating the intervals of continuity as coordinates, we obtain a multivariate Hawkes process. 
For those processes, \cite{Bacry}, Corollary~1, provides an FCLT. 
The continuous-space extension of their result, which is our \textit{ansatz} for the FCLT, reads: for every $A\in\mathcal B[0,1]$,
\begin{equation}\label{FCLT explicit}
\sqrt T\left(\frac{N_{Tv}(A)}T-v\bar\lambda(A)\right)_{v\in[0,1]}\longrightarrow\left(\int_A\left((I-T_{\mathrm{hom}})^{-1}\bar\lambda^{1/2}\right)(x)X(v,x) \ \mathrm dx\right)_{v\in[0,1]},
\end{equation}
where $\bar\lambda\in L^1[0,1]$ solves (\ref{fredholmeq intensity}), $\bar\lambda^{1/2}$ is taken componentwise, and $X(t,x)=\int_{(0,t)}W(s,x)\ \mathrm ds$, for $W$ a standard $L^2$ Gaussian white noise on the strip $[0,\infty)\times[0,1]$.

Just as for the FLLN, we consider the number of points $N_{Tv}(A)$ in a single Borel set $A$. 
Informally, by a result like \eqref{FCLT explicit}, we obtain a spatial white noise, temporal Brownian limit process, in the sense of Schwartz distributions.

In proving Theorem~\ref{FCLT theorem} below, we use a stronger stability condition than the requirement $\rho(T_{\mathrm{hom}})<1$ that we have been using before; see Remark~\ref{rmk5}. 

\begin{assumption}\label{ass10}
Let $D(y):=\int_0^1W(x,y)\ \mathrm dx$ be the \emph{outdegree} of $y\in[0,1]$. 
Suppose that 
\begin{equation}\label{eqass10}\|h\|_{L^1(\mathbb R_+)}\sup_{y\in[0,1]}D(y)<1.\end{equation}
\end{assumption}
Equation \eqref{eqass10} is equivalent to $\rho<1$, where $\rho$ is defined in \eqref{rho mathscr X} in Appendix~\hyperref[app A]{A}.

\begin{theorem}\label{FCLT theorem}
Grant Assumptions~\ref{ass1}, \ref{ass cw cb}, \ref{ass7}, \ref{ass6} and \ref{ass10}. 
Let $(N_t)_{t\geq0}$ be a stationary (see Proposition~\ref{existence mathscr X} and Theorem~\ref{thstationarity mathscr X}), unmarked linear graphon Hawkes process. 
Then we have, for $A\in\mathcal B[0,1]$, and $B$ a standard Brownian motion,
\begin{equation}\label{FCLT mu sigma unspecified}
\sqrt T\left(\frac{N_{Tv}(A)}T-v\mu_A\right)_{v\in[0,1]}\longrightarrow\left(\sigma_A B(\cdot)\right)_{v\in[0,1]},
\end{equation}
as $T\to\infty$, weakly on $D[0,1]$ equipped with the Skorokhod topology.
Here, $\mu_A=\bar\lambda(A)$ and $\sigma_A\in\mathbb R_+$. 
Finally, for multivariate Hawkes processes, i.e., graphon Hawkes processes with piecewise constant parameters, $\sigma_A$ is given by $\int_A\left((I-T_{\mathrm{hom}})^{-1}\bar\lambda^{1/2}\right)(x)\ \mathrm dx$.
\end{theorem}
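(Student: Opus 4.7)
The plan is to reduce this infinite-dimensional FCLT to the multivariate FCLT of \cite{Bacry} via the annealed approximation $\tilde N^d$ of Section~\ref{section convergence results}, followed by a diagonal passage $d=d(T)\to\infty$. Fix $A\in\mathcal B[0,1]$ and, as in the proof of Theorem~\ref{FLLN theorem}, partition $[0,1]$ into $d$ subintervals satisfying Assumption~\ref{ass7}, further splitting each $\mathscr X_n^d$ into $\mathscr X_n^d\cap A$ and $\mathscr X_n^d\cap A^{\mathsf c}$. By Assumption~\ref{ass10} together with Lemma~\ref{lemma offspring size}, the resulting $2d$-variate Hawkes process is stable with uniformly bounded cluster sizes $\mathfrak K$ for all $d$ large enough, and \cite{Bacry}, Corollary~1, projected onto the $A$-coordinates, yields
\[
\sqrt T\left(\tilde N^d_{Tv}(A)/T-v\mu_A^d\right)_{v\in[0,1]}\Rightarrow \sigma_A^d\,B(\cdot)
\]
on $D[0,1]$ in the Skorokhod topology, with $\mu_A^d=\int_A((I-\tilde T_{\mathrm{hom}}^{(d)})^{-1}\tilde\lambda_\infty^d)(x)\,\mathrm dx$ and $\sigma_A^d=\int_A((I-\tilde T_{\mathrm{hom}}^{(d)})^{-1}(\bar\lambda^d)^{1/2})(x)\,\mathrm dx$, where $\bar\lambda^d$ is the unique $L^1$-solution of the Fredholm equation with parameters $(\tilde\lambda_\infty^d,\tilde W^d)$.

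I would then identify the limits of these constants as $d\to\infty$. Lemma~\ref{TVlemma4} gives $\tilde\lambda_\infty^d\to\lambda_\infty$ and $\tilde W^d\to W$ in $L^1$, while continuity of the Fredholm solution in its parameters (\cite{Integral equations book}, Corollary~9.3.12) yields $\bar\lambda^d\to\bar\lambda$ in $L^1$. Using \eqref{fredholmeq intensity} and Assumption~\ref{ass cw cb}, $\bar\lambda$ is essentially bounded, so $(\bar\lambda^d)^{1/2}\to\bar\lambda^{1/2}$ in $L^1$; applying continuity of the Fredholm inverse once more to $g^d:=(I-\tilde T_{\mathrm{hom}}^{(d)})^{-1}(\bar\lambda^d)^{1/2}$, one obtains $g^d\to(I-T_{\mathrm{hom}})^{-1}\bar\lambda^{1/2}$ in $L^1$, and hence $\mu_A^d\to\bar\lambda(A)=:\mu_A$ and $\sigma_A^d\to\sigma_A:=\int_A((I-T_{\mathrm{hom}})^{-1}\bar\lambda^{1/2})(x)\,\mathrm dx$; this matches the explicit formula announced in the piecewise-constant case.

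The main obstacle is controlling the approximation error $E^{d,T}(v):=\sqrt T\,|N_{Tv}(A)-\tilde N^d_{Tv}(A)|/T$, uniformly in $v\in[0,1]$ and large $T$, because the $\sqrt T$ scaling magnifies the prelimit discrepancy. The cluster coupling from the proof of Theorem~\ref{annealed theorem} provides an expected per-unit-time discrepancy $\varepsilon(d)\to 0$; by stationarity (Proposition~\ref{existence mathscr X}, Theorem~\ref{thstationarity mathscr X}), this yields a naive error of order $\sqrt T\,\varepsilon(d)$, which for \emph{fixed} $d$ does not vanish. I would resolve this with a diagonal choice $d=d(T)\to\infty$ slowly enough that $\sqrt T\,\varepsilon(d(T))\to 0$. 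Using the i.i.d.\ structure of cluster discrepancies across unit time intervals and the uniform variance bound stemming from $\mathfrak K$ and Assumption~\ref{ass10}, Doob's maximal inequality applied to the compensated counting martingales then controls the modulus of continuity of $E^{d(T),T}$ uniformly in $T$. Finally, combining tightness in $D[0,1]$ for each $\tilde N^{d(T)}$ from \cite{Bacry} with this approximation bound yields tightness of $N$, and the finite-dimensional distributions are pinned down by the two preceding steps, giving the claimed weak limit $\sigma_A B$.
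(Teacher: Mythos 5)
Your strategy---pass through the annealed prelimit $\tilde N^d$, apply the FCLT of \cite{Bacry}, and close the argument with a diagonal $d = d(T) \to \infty$---is not the route the paper takes, and the paper's choice is not an accident: your diagonal step has a genuine gap that the authors' alternative construction is designed to circumvent.

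The problem is that \cite{Bacry}, Corollary~1, delivers an FCLT for each \emph{fixed} $d$ as $T\to\infty$, but provides no uniform-in-$d$ rate of convergence. Once you make $d = d(T)$ grow with $T$, you are comparing $\sqrt T(\tilde N^{d(T)}_{Tv}(A)/T - v\mu_A^{d(T)})$ against a moving Gaussian target $\sigma_A^{d(T)} B$; this requires the Bacry convergence to be uniform over the index $d$, or at least a quantitative rate that does not degenerate as the dimension blows up. Nothing in \cite{Bacry} or in your argument supplies this. The cluster-coupling bound $\varepsilon(d)\to 0$ tells you how fast $d$ must grow so that $\sqrt T\varepsilon(d(T))\to 0$, but it places no constraint ensuring $d(T)$ grows slowly enough that the $2d(T)$-variate FCLT has already ``kicked in'' at time horizon $T$. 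These two requirements pull in opposite directions and you have not shown they can be reconciled. The Doob maximal inequality step is also not a fix: $E^{d,T}(v)$ is not a martingale, and bounding its modulus of continuity in a $T$-uniform way is exactly the kind of estimate that would require a quantitative error bound you do not have.

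Two further mismatches with the statement. First, the theorem is for a \emph{stationary} process $N$, whereas the coupling in the proof of Theorem~\ref{annealed theorem} (and the resulting per-unit discrepancy $\varepsilon(d)$) was established for processes started from an empty history; extending the coupling to handle the history on $(-\infty,0]$ is an additional step you leave out. Second, and tellingly, you conclude the explicit formula $\sigma_A=\int_A((I-T_{\mathrm{hom}})^{-1}\bar\lambda^{1/2})(x)\,\mathrm dx$ for the general graphon process, whereas the theorem only asserts this formula for piecewise-constant parameters. The authors stop short of the explicit formula precisely because they do not take the approximation route: their proof instead verifies the summability condition of \cite{Zhu}, Theorem~1, by \emph{coupling $N$ from above to a process $M$ whose restriction $M[0,1]$ is a univariate Hawkes process} (using Assumption~\ref{ass10} to dominate all outdegrees by $\sup_y D(y)$), thereby reducing the infinite-dimensional conditional variance estimate to a known one-dimensional one. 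That construction gives existence of \emph{some} $\sigma_A\in\mathbb R_+$ without identifying it; identification is only done in the piecewise-constant case by invoking \cite{Bacry} directly. Your proof proposal would need a substantial new uniformity argument to stand, and in its present form overshoots the theorem's claim.
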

\begin{proof}
First, it follows from the FLLN given in \eqref{FLLN for graphon Hawkes}, which depends on Assumptions~\ref{ass1}, \ref{ass cw cb}, \ref{ass7} and \ref{ass6}, that if the FCLT \eqref{FCLT mu sigma unspecified} holds, 
then we necessarily have $\mu_A=\bar\lambda(A)$.

The proof of \eqref{FCLT mu sigma unspecified} employs Poisson embedding as in \cite{Zhu}, Theorem~1, and \cite{Stabilitypaper},
to verify the conditions of \cite{Billingsley}, Theorem~19.1. 
To this end, we need to check that 
\begin{equation}\label{finiteness cond zhu}
\sum_{n\geq1}\left(\mathbb E\left[\mathbb E\left[N_{(n,n+1]}(A)-\mu_A|\mathcal F_{0}^{(N)}\right]^2\right]\right)^{1/2}<\infty,
\end{equation}
where $N_{(t_1,t_2]}(A)$ counts the number of points arriving in $A$ during $(t_1,t_2]$, and where $\mathcal F_{0}^{(N)}$ is the $\sigma$-algebra recording the history of the stationary process $N$ before time $0$. 

The rest of the proof is structured as follows. 
First, we bound the left-hand side of \eqref{finiteness cond zhu} by the corresponding quantity with $A=[0,1]$. 
Then, we couple $N$ to a spatiotemporal process $M$ such that: (i) $M$ is essentially a univariate Hawkes process; (ii) $M$ is stationary; and (iii) the conditional variance of $N_{(n,n+1]}[0,1]$ can be bounded by that of $M_{(n,n+1]}[0,1]$. 
We finish the proof by applying \cite{Billingsley}, Theorem~19.1, to $M$.

We start bounding the left-hand side of \eqref{finiteness cond zhu} by the corresponding quantity with $A=[0,1]$:
\begin{align}
&\mathbb E\left[\mathbb E\left[N_{(n,n+1]}(A)-\mu_A|\mathcal F_{0}^{(N)}\right]^2\right]=\mathbb E\left[\mathbb E\left[N_{(n,n+1]}(A)|\mathcal F_{0}^{(N)}\right]^2\right]-\mu_A^2\nonumber\\
\nonumber&\leq\mathbb E\left[\mathbb E\left[N_{(n,n+1]}(A)|\mathcal F_{0}^{(N)}\right]^2\right]-\mu_A^2+\mathbb E\left[\mathbb E\left[N_{(n,n+1]}(A^\mathsf{c})|\mathcal F_{0}^{(N)}\right]^2\right]-\mu_{A^\mathsf{c}}^2\\
&\phantom{leq}+\nonumber\underbrace{2\left(\mathbb E\left[\mathbb E\left[N_{(n,n+1]}(A)|\mathcal F_{0}^{(N)}\right]\mathbb E\left[N_{(n,n+1]}(A^\mathsf{c})|\mathcal F_{0}^{(N)}\right]\right]-\mu_A\mu_{A^\mathsf{c}}\right)}_{\text{spatial covariance }\geq\ 0\text{ by nonnegativity of }W\text{ and }h}\\
&=\nonumber\mathbb E\left[\mathbb E\left[N_{(n,n+1]}(A)+N_{(n,n+1]}(A^\mathsf{c})-\mu_A-\mu_{A^\mathsf{c}}|\mathcal F_{0}^{(N)}\right]^2\right]\\
&=\mathbb E\left[\mathbb E\left[N_{(n,n+1]}[0,1]-\mu_{[0,1]}|\mathcal F_{0}^{(N)}\right]^2\right].\label{boundingbyM}
\end{align}

Next, we bound \eqref{boundingbyM} by the corresponding conditional variance expression for a spatiotemporal point process $M=(M_t)_{t\in\mathbb R}$ such that $M[0,1]$ is a stable univariate Hawkes process. 
Consider the cluster representation for the graphon Hawkes process, Definition~\ref{clusterdef}. 
We define $M$ by coupling it to $N$ iteratively as follows. 
For each $k\in\mathbb N_0$, we set $M_k^{(1)}$ equal to the $k$th-generation of $N$, but replace the spatial coordinates by i.i.d.\ $\mathrm{Uni}(0,1)$ marks. 
Next, for each $k\in\mathbb N$ and each $(k-1)$th-generation event $(s,y)$ of $N$, $M_k^{(2)}$ is given by initial streams of events generated by an inhomogeneous Poisson process with intensity $$[s,\infty)\ni t\mapsto h(t-s)\left(\sup_{y\in[0,1]}D(y)-\int_0^1 W(x,y)\ \mathrm dx\right),$$ marked by i.i.d.\ $\mathrm{Uni}(0,1)$ spatial coordinates; use Assumption~\ref{ass10}. 
Next, for each event $(s',y')$ in $M_k^{(2)}$, we let $M_k^{(3)}$ be a univariate Hawkes cluster of events with intensity  $$[s',\infty)\ni t\mapsto h(t-s')\sup_{y\in[0,1]}D(y),$$ marked by i.i.d.\ $\mathrm{Uni}(0,1)$ spatial coordinates. 
We set $$M^{(1)}:=\bigcup_{k\geq0}M_k^{(1)};\quad M^{(i)}:=\bigcup_{k\geq1}M_k^{(i)}\text{ for }i=2,3;\quad M:=M^{(1)}\cup M^{(2)}\cup M^{(3)}.$$ 

Note that it is possible to select $M_k^{(2)}$ and $M_k^{(3)}$ in such a way that $M[0,1]$ is stationary under $\mathbb P$, since we couple the history before time $0$ of the stationary process $N$ to $M^{(1)}$. 
Since $N$ is stationary, so is the cluster center process (see \cite{DaleyVereJones}) corresponding to $N$: those centers arrive according to a stationary homogeneous Poisson point process of rate $\|\lambda_\infty\|_{L^1[0,1]}$ with i.i.d.\ $\mathrm{Uni}(0,1)$ marks; the same holds for $M$. 
Compared to $N$, the clusters of $M$ are enlarged by $M^{(2)}$ and $M^{(3)}$. 
Here, $M$ is a univariate Hawkes process with baseline intensity $\|\lambda_\infty\|_{L^1[0,1]}$, excitation function $h(\cdot)\sup_{y\in[0,1]}D(y)$ and i.i.d.\ $\mathrm{Uni}(0,1)$ spatial coordinates; by Assumption~\ref{ass10} it satisfies the stability conditions of Theorem~\ref{thm stability general X spectral radius} (see Remark~\ref{rmk5}). 
Note that any stationary distribution for the univariate Hawkes process consists of a stationary cluster center process, to which mutually independent component processes are attached. 
This is exactly how it is done for $M$. 

To compare the conditional variances of $N$ and $M$, note that those conditional variances are w.r.t.\ different $\sigma$-algebras. 
To overcome this problem, let $\mathcal F_0^{(N,M)}$ be the $\sigma$-algebra generated by the histories of $N,M$ before time $0$. 
Since $M_k^{(2)}$ and $M_k^{(3)}$ are drawn conditionally independently of $N$, it holds that  
\begin{align}\mathbb E\left[\mathbb E\left[N_{(n,n+1]}[0,1]-\mu_{[0,1]}|\mathcal F_0^{(N)}\right]^2\right]&=\mathbb E\left[\mathbb E\left[N_{(n,n+1]}[0,1]-\mu_{[0,1]}|\mathcal F_0^{(N,M)}\right]^2\right]
\label{sigmaalgebra1}.\end{align}
Next, since the effect of the arrival of a $k$th-generation event on the evolution of a Hawkes process does not depend on $k\in\mathbb N_0$,
\begin{align}
\mathbb E\left[\mathbb E\left[M_{(n,n+1]}[0,1]-\mu^{(M)}_{[0,1]}|\mathcal F_0^{(M)}\right]^2\right]
&=
\mathbb E\left[\mathbb E\left[M_{(n,n+1]}[0,1]-\mu^{(M)}_{[0,1]}|\mathcal F_0^{(N,M)}\right]^2\right].\label{sigmaalgebra2}
\end{align} 

Note that $N$ is not independent of $(M_k^{(2)},M_k^{(3)})$, as there is a positive covariation between $N$ and $M_k^{(2)},M_k^{(3)}$. 
Indeed, higher $k$th-generation offspring for $N$ leads to an excited conditional intensity for $M_k^{(2)}$, which in turn induces an excited conditional intensity for $M_k^{(3)}$. 
Therefore, the conditional variance of $N_{(n,n+1]}[0,1]$ given $\mathcal F_0^{(N,M)}$ can be bounded by that of $M_{(n,n+1]}[0,1]$.
Upon combining this with \eqref{sigmaalgebra1}--\eqref{sigmaalgebra2}, it follows from the proof of \cite{Zhu}, Theorem~1, that 
\begin{align}
&\phantom{\leq}\ \nonumber\sum_{n\geq1}\left(\mathbb E\left[\mathbb E\left[N_{(n,n+1]}[0,1]-\mu_{[0,1]}|\mathcal F_0^{(N)}\right]^2\right]\right)^{1/2}\\
&\leq \sum_{n\geq1}\left(\mathbb E\left[\mathbb E\left[M_{(n,n+1]}[0,1]-\mu^{(M)}_{[0,1]}|\mathcal F_0^{(M)}\right]^2\right]\right)^{1/2}<\infty. \label{bounding by M}
\end{align}
Combining \eqref{boundingbyM} and \eqref{bounding by M}, \cite{Zhu}, Theorem~3, gives that for some $\sigma_A\in\mathbb R_+$,
\begin{equation}
\left(\frac{N_{\lfloor Tv\rfloor}(A)-\lfloor Tv\rfloor\mu_A}{\sqrt T}\right)_{v\in[0,1]}\longrightarrow\sigma_AB(\cdot),
\end{equation}
as $T\to\infty$, weakly on $D[0,1]$ equipped with the Skorokhod topology. 
Again by bounding by $M$ as in \eqref{boundingbyM}--\eqref{bounding by M}, we deduce that $\mathbb E[N_{[0,1]}(A)^2]<\infty$, hence we may proceed as in \cite{Zhu}, eqn.\ (2.19), to conclude that \eqref{FCLT mu sigma unspecified} holds.

We still need to identify $\sigma_A$. 
To this end, note that by \cite{Bacry}, Corollary~1, \eqref{FCLT mu sigma unspecified} holds for graphon Hawkes processes with piecewise constant parameters on the sets of a partition $\mathcal P$ of $[0,1]$ such that $A\in\mathcal P$.
\end{proof}

\section{Fixed-point theorems in the transform domain and an application}\label{section6 fixed point theorems}
Given the definition of a finite-dimensional linear Hawkes process as a Poisson cluster process, it is possible  to derive fixed-point equations for Hawkes-fed birth-death processes in the transform domain, which can be used to approximate transforms numerically; see \cite{multivariateKLM, Infinite server queues} for more details. 
In this section, we utilize Definition~\ref{clusterdef} to establish fixed-point theorems in the infinite-dimensional setting. 
As argued at the start of Section~\ref{large time behavior section},  we can restrict ourselves to $\mathscr X=[0,1]$ without losing generality. 
As before, this is for legibility only: all arguments only rely on $\mathscr X$ being a compact subset of an Euclidean space $\mathbb R^m$, and in particular do not require $\mathscr X$ to be one-dimensional. 
We finish this section by applying the fixed-point theorems we establish to prove that, starting with a $d$-dimensional Hawkes birth-death process $\tilde Q^d(t)$, the limits $d\to\infty$ and $t\to\infty$ commute.

\subsection{Transform characterization}\label{section tranform chars}
Besides a graphon Hawkes pure-birth process $N$, i.e., a counting process, we are interested in a model allowing for deaths, i.e., departures from the system. 
Suppose that each particle in spatial coordinate $x\in\mathscr X$ has a stochastic lifetime distributed as $J_x$, having a distribution only dependent upon the spatial coordinate, independent of the lifetimes of other particles.

\begin{definition}
Let $(\mathscr X,\mathcal B(\mathscr X),\mu)$ be a topological $\sigma$-finite measure space. 
Let $N=(N_t(x))_{t\in[0,\infty)\times\mathscr X}$ be a spatiotemporal point process, where to each event $(t,x)$ a $J_x$-distributed lifetime $w_{t,x}$ is attached, independent of everything else. 
Define the spatiotemporal \emph{birth-death process} $Q$ through the formula $$Q_t(A)=\left|\{(t',x)\in N:t'\leq t<t'+w_{t',x}\}\right|,\quad A\in \mathcal B(\mathscr X).$$
\end{definition}

\begin{assumption}\label{separable lifetimes}
Consider a topological $\sigma$-finite measure space $(\mathscr X,\mathcal B(\mathscr X),\mu)$.
Let $N=(N_t(x))_{t\in[0,\infty)\times\mathscr X}$ be a \emph{linear} spatiotemporal point process, where to each event $(t,x)$ a $J_x$-distributed lifetime $w_{t,x}$ is attached, independent of everything else. 
Assume that the lifetimes $(J_x)_{x\in\mathscr X}$ are i.i.d., distributed as $J$, which is a stochastic process $\mathscr X\to\mathbb R_+$, separable w.r.t.\ the class $\mathcal U$ of open subsets of $\mathscr X$.
\end{assumption}

The definition of \emph{separability} of a stochastic process can be found in \cite{Neveu}, \S III.4.

We aim to characterize the distribution of the linear graphon Hawkes process \emph{with departures} through a suitable transform. 
Since we work with a continuum of spatial coordinates, we consider a Laplace functional instead of a Laplace transform.

\begin{definition}\label{Laplace functional}
Let $Z$ be a spatiotemporal point or birth-death process, where each event consists of a spatial coordinate $x\in\mathscr X$ and a temporal coordinate $t\in\mathbb R_+$. 
Let $\mathrm{BM}_+(\mathscr X)$ be the space of bounded measurable functions $f:\mathscr X\to\mathbb R_+$. 
For $t\in\mathbb R_+$, we define its \emph{Laplace functional} $\mathcal L_N(\cdot,t):\mathrm{BM}_+(\mathscr X)\to\mathbb R_+$ by 
\begin{equation}\label{L_n(f)}
\mathcal L_Z(f,t):=\mathbb E\left[\exp(-f(Z,t))\right]:=\mathbb E\left[\exp\left(-\sum_{(x_i,t_i)\in Z(t)}f(x_i)\right)\right],
\end{equation} 
 with the understanding that $Z(t)$ registers the spatial coordinates of events that have occurred (but have not yet expired) by time $t$.
Here, the expectation is w.r.t.\ the natural filtration generated by $Z$ at time $0$. 
We denote the space of such transforms by $\mathbb L$.
\end{definition}

Note that a characteristic functional defines a distribution uniquely, by a functional version of the Minlos-Bochner theorem. 
See \cite{Levycontinuity}, Theorem~2.1, and \cite{Prohorov}, Theorem 2.

\begin{remark}
By taking $f\equiv f(z):\mathscr X\to\mathbb R_+:x\to z\mathbf1_A(x)$ for $z\in\mathbb R$ and $A\subset\mathscr X$, or, more generally,
$f\equiv f(\mathbf z):\mathscr X\to\mathbb R:x\to\sum_{i=1}^Kz_i\mathbf1_{A_i}(x)$ for $\mathbf z\in\mathbb R^K$,  $K\in\mathbb N$, and $A_1,\ldots, A_K\subset \mathscr X$,
one obtains usual Laplace transforms, which can be used to determine moments of $N(A)$ or $N(A_1)\cdots N(A_K)$ by differentiation. 
By ranging over different $K$ and $A_1,\ldots,A_K$, the Laplace functional allows one to construct \textit{moment measures}.
\end{remark}

Our goal is to use the first step of the cluster representation, (i) in Definition~\ref{clusterdef}, to relate the transform $\mathcal L_Q$, where $Q$ is a linear graphon Hawkes birth-death process, to the cluster processes constituted by each of the (immigrant) arrivals. 
Suppose that the lifetime random variables $(J_x)_{x\in[0,1]}$ satisfy Assumption~\ref{separable lifetimes}. 
Denote the cluster process resulting from an arrival in coordinate $x\in[0,1]$ by $S_x$.  
This keeps track of all surviving offspring of the initial particle in coordinate $x$, including that particle itself, if it has not yet expired. 
We denote the Laplace functional of $S_x$ by $\eta_x=\mathcal L_{S_x}$, i.e., $\eta_x(f,u)=\mathbb E\left[-\exp(f(S_x,u))\right]$. 
With this characterization of $\mathcal L_Q$ in terms of $(\eta_x)_{x\in[0,1]}$, we use the iterative step in the cluster representation, (ii)--(iii) in Definition~\ref{clusterdef}, to derive a fixed-point equation for $\eta_x$, where we express $\eta_x$ as a function of $(\eta_x)_{x\in[0,1]}$, the collection of Laplace functionals of cluster processes $(S_x)_{x\in[0,1]}$. 
This asks for a transform of $\mathscr X$-dimensional, $\mathscr X$-valued point processes.

\begin{definition}\label{Laplace functional infdim}
Let $Z=(Z_x)_{x\in \mathscr X}$ be an $\mathscr X$-dimensional spatiotemporal point or birth-death process, where for $x\in\mathscr X$, $Z_x$ is a spatiotemporal point or birth-death process in the sense of Definition~\ref{Laplace functional}. 
Let $\mathscr Z$ be the space of such processes. 
We define the Laplace functional of $Z\in\mathscr Z$ as the mapping $$\mathcal L_Z(\cdot,t):\mathrm{BM}_+(\mathscr X)\to\mathbb R_+^{\mathscr X}:f\mapsto\left(\mathcal L_{Z_x}(f,t)\right)_{x\in\mathscr X}.$$ 
We denote the space of such transforms by $\mathbb L^{\mathscr X}$.

Fix a time horizon $t\in\mathbb R_+$ and some $p\in[1,\infty]$. 
On $\mathbb L^{\mathscr X}$, we consider the metric
\begin{equation}\label{metric LX}
d_{\mathbb L^{\mathscr X},p}:\mathbb L^{\mathscr X}\times\mathbb L^{\mathscr X}\to\mathbb R_+:
(\mathcal L,\mathcal L')\mapsto\sup_{\substack{u\in[0,t]\\ f\in\mathrm{BM}_+(\mathscr X)}}\|\mathcal L_x(f,u)-\mathcal L_x'(f,u)\|_{L^p(\mathscr X)}.
\end{equation}
\end{definition}

Our first result relates the transform of $Q$ to the transform of the cluster processes $(S_x)_{x\in[0,1]}$. 
It implies that in order to calculate the transform of $Q$, it suffices to calculate $(\eta_x)_{x\in[0,1]}$. 
The proof of this result can be found in Appendix~\hyperref[app D]{D}.

\begin{theorem}\label{transform Q S_x}
Grant Assumptions~\ref{ass1} and~\ref{separable lifetimes}. 
Let $\mathscr X=[0,1]$.
The transform of $Q$ satisfies 
\begin{equation}\label{queue into cluster laplace}
\mathcal L_Q(f,t)=\exp\left(\int_0^1\int_0^t\left(\eta_x(f,u)-1\right)\lambda_\infty(x)\ \mathrm du\ \mathrm dx\right).
\end{equation}
\end{theorem}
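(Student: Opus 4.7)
The plan is to exploit the Poisson cluster representation of Definition~\ref{clusterdef}: decompose $Q$ as a superposition of independent clusters indexed by the immigrant Poisson process, compute the conditional Laplace functional given the immigrants as a product of cluster Laplace functionals, and then integrate out the immigrant process using the Laplace functional of a Poisson point process (Campbell's formula).

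More precisely, let $I$ denote the time-homogeneous Poisson point process on $[0,t]\times[0,1]$ with intensity $\mathrm ds\,\lambda_\infty(x)\mathrm dx$ generating the immigrants. Conditionally on $I=\{(s_i,x_i)\}_i$, the $i$th immigrant spawns an independent cluster distributed as $S_{x_i}$, but time-shifted by $s_i$; the lifetime of the immigrant itself is included in $S_{x_i}$ by Assumption~\ref{separable lifetimes}. Letting $\tau_{s}$ denote this temporal shift, we therefore have, as an a.s.\ identity,
\begin{equation*}
Q(t)=\bigsqcup_{(s_i,x_i)\in I}\tau_{s_i}S_{x_i}(t-s_i),
\end{equation*}
where the disjoint union is taken in the sense of point processes. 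Consequently, since $f$ acts additively on the points of $Q(t)$,
\begin{equation*}
f(Q,t)=\sum_{(s_i,x_i)\in I}f\bigl(S_{x_i},t-s_i\bigr).
\end{equation*}

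Conditioning on $I$ and using the independence of the clusters $(S_{x_i})_i$, which follows from the independent branching construction in Definition~\ref{clusterdef}, we obtain
\begin{equation*}
\mathbb E\bigl[\exp(-f(Q,t))\mid I\bigr]=\prod_{(s_i,x_i)\in I}\eta_{x_i}(f,t-s_i).
\end{equation*}
Taking expectations and applying the Laplace functional formula for a Poisson point process with intensity $\mathrm ds\,\lambda_\infty(x)\mathrm dx$ (see e.g.\ \cite{DaleyVereJones}, Proposition~9.5.VIII) yields
\begin{equation*}
\mathcal L_Q(f,t)=\exp\left(\int_0^t\int_0^1\bigl(\eta_x(f,t-s)-1\bigr)\lambda_\infty(x)\,\mathrm dx\,\mathrm ds\right),
\end{equation*}
and the change of variable $u=t-s$ delivers \eqref{queue into cluster laplace}.

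The main technical point to verify is that the Campbell--Laplace formula applies, i.e.\ that $\int_0^t\int_0^1\lvert\eta_x(f,t-s)-1\rvert\lambda_\infty(x)\,\mathrm dx\,\mathrm ds<\infty$. This is where the assumptions enter: since $f\in\mathrm{BM}_+(\mathscr X)$, each cluster Laplace functional satisfies $\eta_x(f,u)\in(0,1]$, so the integrand is bounded by $1$, and integrability follows from $\lambda_\infty\in L^1_{\mathrm{loc}}(\mathscr X)$ together with the finiteness of $[0,t]\times[0,1]$ under Assumption~\ref{ass1}. Measurability of $(s,x)\mapsto\eta_x(f,t-s)$ is ensured by the separability conditions on $B$ and $J$ in Assumptions~\ref{ass1} and~\ref{separable lifetimes}, which allow one to realize the cluster processes $(S_x)_{x\in[0,1]}$ as jointly measurable in $x$, so that Fubini applies throughout.
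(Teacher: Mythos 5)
Your proof is correct and follows essentially the same route as the paper: both decompose $Q$ into independent clusters attached to the immigrant Poisson stream, condition on the immigrants, exploit the independence of clusters, and then integrate out the immigrant process. The only cosmetic difference is that you invoke the Poisson Laplace-functional (Campbell) formula as a black box, whereas the paper rederives it by explicitly conditioning on the number $n$ of immigrants, using that, given $n$, the arrival times are i.i.d.\ uniform on $[0,t]$ and the spatial marks are i.i.d.\ with density $\lambda_\infty(\cdot)/\alpha$, and then summing the resulting Poisson series.
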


The next step is to derive a fixed-point equation for the cluster transforms $\eta_x$, by operationalizing the iterative step from the cluster representation. 
This iterative step can be transformed into a distributional equality for $S_x$. 
Given an arrival in $x$, translate the time frame such that this arrival occurs at $t=0$. 
Conditionally upon the outcome of $B$, let $K_x(\cdot)$ be an inhomogeneous Poisson process with intensity $\|B_{\cdot x}(0)W(\cdot,x)\|_{L^1[0,1]}h(\cdot)$, counting the children of the initial event in $x$. 
Let the locations $x_i$ of the children be drawn i.i.d.\ according to the law having Lebesgue density $$\mathbb P(x_i\in\mathrm dz)=\frac{B_{zx}(0)W(z,x)\ \mathrm dz}{\|B_{\cdot x}(0)W(\cdot,x)\|_{L^1[0,1]}}.$$ 
Then, with $\delta_x(y)=\mathbf1\{y=x\}$ 
the Kronecker delta, the following distributional equality holds:
\begin{equation}\label{distributionaleq}
S_x(u)\stackrel{\mathcal D}=\delta_x\cdot\mathbf1\{J_x>u\}+\sum_{i=1}^{K_x(u)}S_{x_i}(u-t_i).
\end{equation}

This distributional equality can be used to prove a fixed-point result. 
Denote the survival function and c.d.f.\ corresponding to the lifetime random variable $J_x$ by 
$$\mathscr J_x(u)=\mathbb P(J_x>u)\quad\text{and}\quad\bar{\mathscr J}_x(u)=\mathbb P(J_x\leq u)=1-\mathscr J_x(u).$$
\begin{definition}\label{defPhi}
Let $\beta_x$ be the Laplace functional of the stochastic process $B_{\cdot x}(0)$. 
For $\mathscr X=[0,1]$, we define the operator $\Phi:\mathbb L^{[0,1]} \to\mathbb L^{[0,1]}$ as follows. 
Let 
\begin{equation}\label{def10gamma}\gamma_x(f,u):=\bar{\mathscr J}_x(u)+\mathscr J_x(u)e^{-f(x)}.
\end{equation} 
For $\xi\in \mathbb L^{[0,1]}$, $f\in\mathrm{BM}_+[0,1]$ and $u\in\mathbb R_+$, let
\begin{align}
\Phi_x(\xi)(f,u):=\gamma_x(f,u)\beta_x\left(\left\{y\mapsto\int_0^u\left(1-\xi_y(f,u-s)\right)W(y,x)h(s)\ \mathrm ds\right\}\right).
\end{align}
\end{definition}
Before proving that $\eta$ is a fixed point of $\Phi$, we prove that $\Phi$ is a well-defined mapping. 
The proofs of the next four results can be found in Appendix~\hyperref[app D]{D}.

\begin{lemma}\label{lemmawelldefined}
Grant Assumptions~\ref{ass1} and~\ref{separable lifetimes}.
The mapping $\Phi:\mathbb L^{[0,1]} \to\mathbb L^{[0,1]}$ is well-defined, i.e., $\Phi(\xi)\in\mathbb L^{[0,1]}$ for $\xi\in\mathbb L^{[0,1]}$.
\end{lemma}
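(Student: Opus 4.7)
The plan is to first check that $\Phi_x(\xi)(f,u)$ is a well-defined number in $[0,1]$, and then to identify it as the Laplace functional of an explicitly constructed spatiotemporal process $S^\xi_x$, so that $\Phi(\xi)\in\mathbb L^{[0,1]}$ follows directly from Definition~\ref{Laplace functional infdim}.

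For well-definedness, since every $\xi_y$ is a Laplace functional we have $\xi_y(f,u-s)\in[0,1]$, so
\[
g(y):=\int_0^u\bigl(1-\xi_y(f,u-s)\bigr)W(y,x)h(s)\,ds
\]
is a nonnegative measurable function of $y$ (by Fubini and joint measurability of the processes underlying $\xi$), bounded pointwise by $W(y,x)\|h\|_{L^1(\mathbb R_+)}$. By Hölder's inequality with the exponents $(p,q)$ from Assumption~\ref{ass1},
\[
\int_0^1 g(y)B_{yx}(0)\,d\mu(y)\leq \|h\|_{L^1(\mathbb R_+)}\|W(\cdot,x)\|_{L^p}\|B_{\cdot x}(0)\|_{L^q}
\]
is $\mathbb P$-a.s.\ finite, so $\beta_x(g)=\mathbb E\exp(-\int_0^1 g(y)B_{yx}(0)\,d\mu(y))$ lies in $[0,1]$; together with $\gamma_x(f,u)\in[0,1]$ this yields $\Phi_x(\xi)(f,u)\in[0,1]$.

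For the process construction, since $\xi\in\mathbb L^{[0,1]}$, Definition~\ref{Laplace functional infdim} furnishes a family $(\tilde Z_y)_{y\in[0,1]}$ of spatiotemporal processes with $\mathcal L_{\tilde Z_y}=\xi_y$. Mimicking \eqref{distributionaleq}, define
\[
S^\xi_x(u):=\delta_x\mathbf 1\{J_x>u\}+\sum_{i=1}^{K_x(u)}\tilde Z_{x_i}^{(i)}(u-t_i),
\]
where $J_x\sim J$ and (conditional on $B_{\cdot x}(0)$) $K_x$ is the inhomogeneous Poisson process with intensity $\|B_{\cdot x}(0)W(\cdot,x)\|_{L^1[0,1]}h(\cdot)$, the child locations $(x_i)_i$ are drawn i.i.d.\ according to the density prescribed preceding \eqref{distributionaleq}, and $(\tilde Z_{x_i}^{(i)})_i$ are mutually independent copies of $\tilde Z_{x_i}$, independent of the birth structure. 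The separability hypotheses in Assumptions~\ref{ass1} and~\ref{separable lifetimes} allow us to place $B$, $J$ and the $\tilde Z$'s on a common probability space; the bound from the previous paragraph gives $\mathbb E[K_x(u)]<\infty$, so $S^\xi_x(u)$ is a.s.\ a locally finite point configuration on $[0,1]$.

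A routine conditioning calculation then shows $\mathcal L_{S^\xi_x}(f,u)=\Phi_x(\xi)(f,u)$: averaging the independent term $\delta_x\mathbf 1\{J_x>u\}$ over $J_x$ yields the factor $\gamma_x(f,u)$; conditioning on $B_{\cdot x}(0)$ and on the children's subtrees, the standard Laplace functional formula for Poisson random measures produces $\exp(-\int_0^1 g(y)B_{yx}(0)\,d\mu(y))$; and averaging over $B_{\cdot x}(0)$ then gives $\beta_x(g)$. Joint measurability of $x\mapsto\mathcal L_{S^\xi_x}(f,u)$ is inherited from that of all ingredients, so $\Phi(\xi)\in\mathbb L^{[0,1]}$. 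The main technical subtlety is that $\beta_x$ is being evaluated on a function $g$ that is only pointwise, not uniformly, bounded in $y$; this is handled cleanly by the nonnegativity of $B$ and the Hölder bound above, interpreting $\beta_x$ via the natural monotone-convergence extension of Definition~\ref{Laplace functional}.
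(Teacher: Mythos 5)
Your proposal is correct and follows essentially the same route as the paper: you construct the process-domain analog $\hat\Phi$ of $\Phi$ via the branching recursion $\delta_x\mathbf 1\{J_x>u\}+\sum_{i=1}^{K_x(u)}Z_{x_i}(u-t_i)$ applied to a family $(Z_y)_y$ with transform $\xi$, and then verify through the cluster conditioning calculation of Theorem~\ref{fixedpointtheorem} that $\mathcal L_{\hat\Phi(Z)_x}=\Phi_x(\xi)$, so that $\Phi(\xi)$ is the transform of a genuine element of $\mathscr Z$. The extra housekeeping you supply --- the Hölder bound making $\beta_x(g)$ well-defined and the explicit mutual independence of the subtree copies --- is left implicit in the paper's proof but does not change the argument.
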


\begin{theorem}\label{fixedpointtheorem}
Grant Assumptions~\ref{ass1} and~\ref{separable lifetimes}.
The $[0,1]$-dimensional spatiotemporal cluster process $\eta$ is a fixed point of $\Phi$: we have $\eta=\Phi(\eta)$, i.e., for each $x\in[0,1]$, $\eta_x=\Phi_x(\eta)$.
\end{theorem}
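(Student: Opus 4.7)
The plan is to compute $\eta_x(f,u)=\mathbb E[e^{-f(S_x,u)}]$ directly from the distributional equation \eqref{distributionaleq} by conditioning on the ``first generation'' of the branching structure. Writing
$$f(S_x,u)=f(x)\mathbf 1\{J_x>u\}+\sum_{i=1}^{K_x(u)}f(S_{x_i},u-t_i),$$
the plan is to decompose the expectation into (i) the contribution of the initial particle's survival indicator, and (ii) the contribution of the offspring clusters. The former, by independence of $J_x$ from everything else and by the definition of $\gamma_x$, gives the factor $\gamma_x(f,u)=\bar{\mathscr J}_x(u)+\mathscr J_x(u)e^{-f(x)}$. The latter requires two layers of conditioning: first on the random environment $B_{\cdot x}(0)$, then on the realization of the Poisson point process $K_x$ generating children.

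For the offspring contribution, I would condition on $B_{\cdot x}(0)$ and observe that, conditionally, $\{(t_i,x_i)\}_{i=1}^{K_x(u)}$ is an inhomogeneous spatiotemporal Poisson point process on $[0,u]\times[0,1]$ with intensity $B_{yx}(0)W(y,x)h(t)\,\mathrm dy\,\mathrm dt$ (this is exactly the description given immediately before \eqref{distributionaleq} once one merges the point count with the i.i.d.\ location draws). Since the sub-clusters $S_{x_i}$ are, conditionally on their birth times and locations, mutually independent with Laplace functionals $\eta_{x_i}(f,u-t_i)$, applying the standard Laplace-functional formula for marked Poisson point processes yields
\begin{align*}
&\mathbb E\left[\exp\Big(-\sum_{i=1}^{K_x(u)}f(S_{x_i},u-t_i)\Big)\,\Big|\,B_{\cdot x}(0)\right]\\
&\qquad=\exp\left(-\int_0^1\!\!\int_0^u\bigl(1-\eta_y(f,u-s)\bigr)B_{yx}(0)W(y,x)h(s)\,\mathrm ds\,\mathrm dy\right).
\end{align*}
Taking expectation over $B_{\cdot x}(0)$ and recognizing the resulting expression as the Laplace functional $\beta_x$ evaluated at $y\mapsto\int_0^u(1-\eta_y(f,u-s))W(y,x)h(s)\,\mathrm ds$ produces precisely $\Phi_x(\eta)(f,u)$. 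Multiplying by $\gamma_x(f,u)$ (coming from the independence of $J_x$) completes the identification $\eta_x=\Phi_x(\eta)$.

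The main technical obstacle is making the Poisson-functional step rigorous in the present infinite-dimensional, marked setting: one must ensure that the map $y\mapsto\int_0^u(1-\eta_y(f,u-s))W(y,x)h(s)\,\mathrm ds$ is measurable and bounded (so that $\beta_x$ can be applied and Lemma~\ref{lemmawelldefined} invoked), and one must justify the use of Fubini to interchange the integral over $B_{\cdot x}(0)$ with the integral against the intensity measure. Measurability is handled by the separability assumptions on $B$ (Assumption~\ref{ass1}) and on $J$ (Assumption~\ref{separable lifetimes}), which provide jointly measurable versions for which the integrals are well-defined and Fubini applies; boundedness follows from $1-\eta_y(f,u-s)\in[0,1]$ together with $W\in L^1$ and $h\in L^1(\mathbb R_+)$. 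The conditional independence of the sub-clusters $(S_{x_i})_i$ from the first-generation environment (given their birth data) is inherent to the cluster construction in Definition~\ref{clusterdef} and is what allows the substitution of $\eta_{x_i}$ inside the Poisson Laplace formula.
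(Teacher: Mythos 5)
Your proof is correct and at the conceptual level it follows the same route as the paper: use the distributional equation \eqref{distributionaleq}, factor out the survival indicator of the initial particle via independence of $J_x$ (giving $\gamma_x$), then compute the Laplace functional of the offspring contribution conditionally on $B_{\cdot x}(0)$ and recognize $\beta_x$. The difference lies in how the conditional Poisson computation is carried out. The paper does it from first principles: it conditions on $K_x(u)=n$, then on the $n$ spatial locations, then integrates the temporal placement density $p_u(s)=h(s)/H(u)$, and finally re-sums the resulting Poisson-mixture series into an exponential. You instead apply the standard Laplace-functional formula for marked Poisson point processes in one step, using that $\{(t_i,x_i)\}$ is, given $B_{\cdot x}(0)$, a Poisson point process on $[0,u]\times[0,1]$ with intensity $B_{yx}(0)W(y,x)h(s)\,\mathrm ds\,\mathrm dy$, and that the sub-clusters attached to each point are conditionally independent marks with transforms $\eta_{x_i}(f,u-t_i)$. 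Both arguments produce exactly
\begin{equation*}
\mathbb E\!\left[\exp\!\left(-\sum_{i=1}^{K_x(u)}f(S_{x_i},u-t_i)\right)\Big|B\right]
=\exp\!\left(-\int_0^1\!\!\int_0^u\bigl(1-\eta_y(f,u-s)\bigr)B_{yx}(0)W(y,x)h(s)\,\mathrm ds\,\mathrm dy\right),
\end{equation*}
so the substance is identical; your version is shorter and delegates the combinatorics to a known lemma, while the paper's is self-contained and makes the factorization of spatial and temporal placement explicit. One small caution in your justification: boundedness of the argument of $\beta_x$ is only guaranteed under Assumption~\ref{ass cw cb}; under Assumption~\ref{ass1} alone what is needed is that $\int_0^1 g(y)B_{yx}(0)\,\mathrm dy<\infty$ a.s., which follows by H\"older from $W(\cdot,x)\in L^p$ and $\mathbb E[B_{\cdot x}]\in L^q$ together with $1-\eta_y\in[0,1]$ and $h\in L^1(\mathbb R_+)$. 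With that adjustment the Fubini and well-definedness steps go through.
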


\begin{lemma}\label{continuity}
Grant Assumptions~\ref{ass1}, \ref{ass cw cb} and~\ref{separable lifetimes}.
Fix a time horizon $t\in\mathbb R_+$ and $p\in[1,\infty]$. 
The mapping $\Phi$ 
is continuous w.r.t.\ the topology induced by the metric $d_{\mathbb L^{[0,1]},p}$.
\end{lemma}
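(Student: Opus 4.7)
The plan is to establish Lipschitz continuity of $\Phi$, which is a stronger statement and immediately gives the continuity claim. Fix $\xi,\xi'\in\mathbb L^{[0,1]}$ and write $\epsilon:=d_{\mathbb L^{[0,1]},p}(\xi,\xi')$. I would estimate the difference $|\Phi_x(\xi)(f,u)-\Phi_x(\xi')(f,u)|$ pointwise in $(x,f,u)$ and then take the $L^p$-norm in $x$ and the supremum over $(u,f)\in[0,t]\times\mathrm{BM}_+[0,1]$. Since $\gamma_x(f,u)\in[0,1]$ by definition \eqref{def10gamma}, and since $\gamma_x(f,u)$ does not depend on $\xi$ at all, it factors out and contributes a factor bounded by $1$. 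Thus the only thing to control is
\begin{equation*}
|\beta_x(g_{\xi,f,u,x})-\beta_x(g_{\xi',f,u,x})|,\qquad g_{\xi,f,u,x}(y):=\int_0^u(1-\xi_y(f,u-s))W(y,x)h(s)\,\mathrm ds.
\end{equation*}

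Since $\beta_x$ is the Laplace functional of the nonnegative process $B_{\cdot x}(0)$ and the function $z\mapsto e^{-z}$ is $1$-Lipschitz on $[0,\infty)$, Fubini together with $\mathbb E[B_{yx}(0)]\leq C_B$ from Assumption~\ref{ass cw cb} yields
\begin{equation*}
|\beta_x(g_{\xi,\cdots})-\beta_x(g_{\xi',\cdots})|\leq\mathbb E\!\left[\int_0^1 B_{yx}(0)\,|g_{\xi}(y)-g_{\xi'}(y)|\,\mathrm dy\right]\leq C_B\,\|g_{\xi}-g_{\xi'}\|_{L^1[0,1]}.
\end{equation*}
Next, substituting the definition of $g$, swapping the order of integration, and applying Hölder's inequality with the conjugate exponent $q\in[1,\infty]$ of $p$ gives
\begin{equation*}
\|g_{\xi}-g_{\xi'}\|_{L^1[0,1]}\leq\int_0^u h(s)\,\|W(\cdot,x)\|_{L^q[0,1]}\,\|\xi_\cdot(f,u-s)-\xi'_\cdot(f,u-s)\|_{L^p[0,1]}\,\mathrm ds.
\end{equation*}
Assumption~\ref{ass cw cb} (with $[0,1]$ of finite measure) controls $\|W(\cdot,x)\|_{L^q[0,1]}\leq C_W$ uniformly in $x$, and for every $s\in[0,u]\subset[0,t]$ the $L^p$-norm of $\xi_\cdot(f,u-s)-\xi'_\cdot(f,u-s)$ is bounded by $\epsilon$ by definition of $d_{\mathbb L^{[0,1]},p}$. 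Combining these estimates produces the pointwise bound
\begin{equation*}
|\Phi_x(\xi)(f,u)-\Phi_x(\xi')(f,u)|\leq C_B C_W \|h\|_{L^1(\mathbb R_+)}\,\epsilon,
\end{equation*}
uniform in $x\in[0,1]$, $f\in\mathrm{BM}_+[0,1]$ and $u\in[0,t]$.

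Since this bound is uniform in $x$ and $[0,1]$ has unit Lebesgue measure, taking $L^p$-norms in $x$ preserves the bound, and taking the supremum over $(u,f)$ yields
\begin{equation*}
d_{\mathbb L^{[0,1]},p}(\Phi(\xi),\Phi(\xi'))\leq C_B C_W\|h\|_{L^1(\mathbb R_+)}\,d_{\mathbb L^{[0,1]},p}(\xi,\xi'),
\end{equation*}
establishing the desired (Lipschitz) continuity. The only non-routine ingredient is making sure Hölder is applied in the right variable so that the $L^q$-factor hits $W(\cdot,x)$, which is uniformly bounded by $C_W$, rather than an object depending on $\xi$; this is what allows the constant to be finite for every $p\in[1,\infty]$ simultaneously. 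I do not anticipate a genuine obstacle — every step is either a pointwise elementary inequality or a direct application of Assumption~\ref{ass cw cb}.
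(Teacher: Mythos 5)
Your proof is correct and follows essentially the same strategy as the paper's: drop the factor $\gamma_x\in[0,1]$, linearize the exponential inside $\beta_x$ via the $1$-Lipschitz property of $z\mapsto e^{-z}$ on $[0,\infty)$, push the expectation onto $\mathbb E[B_{yx}(0)]\leq C_B$, and control the remaining $y$-integral by the distance $d_{\mathbb L^{[0,1]},p}(\xi,\xi')$. Two small deviations are worth recording. First, you convert the pointwise estimate to an $L^p$-estimate by applying H\"older on the $y$-integral, pairing $W(\cdot,x)\in L^q[0,1]$ with $\|\xi_\cdot-\xi'_\cdot\|_{L^p[0,1]}$; the paper instead keeps the $L^1$-norm in $y$ and invokes Jensen's inequality when raising to the $p$th power. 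Both routes exploit that $[0,1]$ has unit measure and yield the same type of bound. Second, and more substantively, you bound $\int_0^u h(s)\,\mathrm ds$ by $\|h\|_{L^1(\mathbb R_+)}$, whereas the paper bounds it by $u\|h\|_\infty$. Since Assumption~\ref{ass1} only guarantees $h\in L^1(\mathbb R_+)$, your version proves the lemma under exactly the stated hypotheses with the cleaner Lipschitz constant $C_BC_W\|h\|_{L^1(\mathbb R_+)}$, whereas the paper's $\|h\|_\infty$ bound tacitly assumes $h$ bounded. The paper's choice is driven by the subsequent contraction Lemma~\ref{contractionlemma}, where the linear-in-$u$ factor is genuinely needed to generate the $u^n/n!$ decay; for the continuity statement alone, your $L^1$-bound on $h$ suffices and is the more natural estimate.
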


Next we show that $\Phi$ is a \emph{contraction}. 
To state the result, for $\xi^{(0)}\in\mathbb L^{[0,1]}$, we denote its iterates under $\Phi$ by $(\xi^{(n)})_{n\in\mathbb N_0}$, i.e., $\xi^{(n)}=\Phi(\xi^{(n-1)})$ for $n\in\mathbb N$. 
\begin{lemma}\label{contractionlemma}
Grant Assumptions~\ref{ass1}, \ref{ass cw cb} and~\ref{separable lifetimes}.
Let $\xi^{(0)},\zeta^{(0)}\in\mathbb L^{[0,1]}$. 
Then there exists $C\in\mathbb R_+$ such that, for any $u\in[0,t]$, $f\in\mathrm{BM}_+[0,1]$ and $n\in\mathbb N$, it holds that 
\begin{equation}\label{contractioneq}
\left\|\xi_x^{(n)}(f,u)-\zeta_x^{(n)}(f,u)\right\|_{L^p[0,1]}\leq\frac{C^nu^n}{n!}.
\end{equation}
\end{lemma}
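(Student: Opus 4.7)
The proof goes by induction on $n$. Set
\[\Delta_n(u):=\sup_{f\in\mathrm{BM}_+[0,1]}\|\xi^{(n)}_\cdot(f,u)-\zeta^{(n)}_\cdot(f,u)\|_{L^p[0,1]};\]
the base case $\Delta_0\leq 1$ is immediate, as Laplace functionals take values in $[0,1]$. The central tool for the inductive step is the Lipschitz estimate $|e^{-a}-e^{-b}|\leq|a-b|$ for $a,b\geq 0$. Combining this with Jensen's inequality applied to the Laplace functional $\beta_x$ of the nonnegative process $B_{\cdot x}(0)$ yields
\[|\beta_x(g_1)-\beta_x(g_2)|\leq\int_0^1|g_1(y)-g_2(y)|\,\mathbb{E}[B_{yx}(0)]\,dy.\]
Inserting the specific arguments $g_\phi(y)=\int_0^u(1-\phi_y(f,u-s))W(y,x)h(s)\,ds$ appearing inside $\Phi_x$, and using $\gamma_x\in[0,1]$ together with the uniform bounds $W\leq C_W$ and $\mathbb{E}[B]\leq C_B$ from Assumption~\ref{ass cw cb}, I obtain the pointwise estimate
\[|\Phi_x(\xi^{(n-1)})(f,u)-\Phi_x(\zeta^{(n-1)})(f,u)|\leq C_WC_B\int_0^u h(s)\int_0^1|\xi^{(n-1)}_y(f,u-s)-\zeta^{(n-1)}_y(f,u-s)|\,dy\,ds.\]

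Because the right-hand side is independent of $x$, taking $L^p$-norms in $x$ and invoking the inequality $\|\cdot\|_{L^1[0,1]}\leq\|\cdot\|_{L^p[0,1]}$ on the probability space $[0,1]$ (a consequence of Hölder's inequality) yields the Volterra-type recursion $\Delta_n(u)\leq C_WC_B\int_0^u h(s)\,\Delta_{n-1}(u-s)\,ds$. Iterating $n$ times produces $\Delta_n(u)\leq(C_WC_B)^n\int_0^u h^{*n}(v)\,dv$, where $h^{*n}$ denotes the $n$-fold self-convolution of $h$. The factorial denominator then comes from the elementary fact that if $V_1,\ldots,V_n$ are i.i.d.\ with density pointwise bounded by $D$, then $\mathbb{P}(V_1+\cdots+V_n\leq u)\leq(Du)^n/n!$ (shown by one further induction on $n$). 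Applying this to $V_i$ of density $h/\|h\|_{L^1(\mathbb{R}_+)}$, bounded by $D=\|h\|_\infty/\|h\|_{L^1(\mathbb{R}_+)}$, gives $\int_0^u h^{*n}(v)\,dv\leq(\|h\|_\infty u)^n/n!$, so that $\Delta_n(u)\leq(Cu)^n/n!$ with $C:=C_WC_B\|h\|_\infty$.

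The principal obstacle is obtaining the factorial denominator $1/n!$. A naive iteration of the Volterra inequality only delivers the exponential bound $(C_WC_B\|h\|_{L^1(\mathbb{R}_+)})^n$, which is insufficient. Extracting the factorial requires pointwise control on the iterated convolutions $h^{*n}$, most cleanly obtained assuming $h$ is essentially bounded. Should the proof need to proceed under the weaker $h\in L^1(\mathbb{R}_+)$ hypothesis alone, one would exploit the generation-deepening structure of $\Phi$---namely that $\Phi_x(\xi)(f,0)=e^{-f(x)}$ is independent of $\xi$, so $\Delta_n(0)=0$ for $n\geq 1$---to gain an extra factor of $u$ per iterate via a Duhamel-type expansion, at the cost of a more delicate estimate.
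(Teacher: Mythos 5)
Your proof is correct, and it does arrive at the same Volterra-type recursion
\[
\Delta_n(u)\leq C_WC_B\int_0^u h(s)\,\Delta_{n-1}(u-s)\ \mathrm ds,\qquad \Delta_0\leq 1,
\]
that underlies the paper's argument; the pointwise Lipschitz estimate on $\beta_x$, the observation that the bound is $x$-free so the $L^p$-norm in $x$ is trivially taken, and the use of $\|\cdot\|_{L^1[0,1]}\leq\|\cdot\|_{L^p[0,1]}$ are all sound. Where you diverge from the paper is in how you extract the factorial. The paper bounds $h\leq\|h\|_\infty$ \emph{inside} the recursion and then simply iterates $\int_0^u s^n/n!\,\mathrm ds=u^{n+1}/(n+1)!$ (invoking Jensen's, in effect Minkowski's, inequality to handle the $L^p$-norm over $y$, since it keeps the $\int_0^1\cdots\mathrm dy$ inside). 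You instead keep $h$ intact through all $n$ iterations, produce the $n$-fold convolution $\int_0^u h^{*n}$, and then control that via the probabilistic bound $\mathbb P(V_1+\cdots+V_n\leq u)\leq(Du)^n/n!$ for i.i.d.\ $V_i$ with density $\leq D$. Both approaches are valid and yield the same constant $C=C_WC_B\|h\|_\infty$ (the paper carries an extra harmless factor of $2$ from its base case). Your route, however, introduces an additional nested induction that the problem does not require: once you have your scalar Volterra recursion, replacing $h(s)$ by $\|h\|_\infty$ immediately gives $\Delta_n(u)\leq C_WC_B\|h\|_\infty\int_0^u\Delta_{n-1}(s)\,\mathrm ds$, from which $\Delta_n(u)\leq(Cu)^n/n!$ follows by the classical Picard-iteration induction. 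The ``principal obstacle'' you describe---that naive iteration only yields $(C_WC_B\|h\|_{L^1})^n$---arises only if one discards the $u$-dependence at each step; keeping it, as your own recursion does, resolves the issue without the convolution/density detour.
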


We are now ready to state the main result of this subsection, saying that iterates under $\Phi$ of any transform in $\mathbb L^{[0,1]}$ converge to  $\eta$.

\begin{theorem}\label{convergence_iterates}
Grant Assumptions~\ref{ass1}, \ref{ass cw cb} and~\ref{separable lifetimes}.
Fix $t\in\mathbb R_+$. 
For any $\xi^{(0)}\in\mathbb L^{[0,1]}$, the sequence $(\xi^{(n)})_{n\in\mathbb N_0}$ converges pointwise to the unique fixed point $\eta$ in $\mathbb L^{[0,1]}$ of $\Phi$. 
More specifically, for any $u\leq t$ and $f\in\mathrm{BM}_+[0,1]$,
\begin{equation}
\xi^{(n)}(f,u)\to \eta(f,u).
\end{equation}
\end{theorem}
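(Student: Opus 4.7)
The plan is to reduce the statement to a direct application of Lemma~\ref{contractionlemma}, exploiting the fact that $\eta$ is already known to be a fixed point of $\Phi$ by Theorem~\ref{fixedpointtheorem}. I would instantiate Lemma~\ref{contractionlemma} with the given starting transform $\xi^{(0)}\in\mathbb L^{[0,1]}$ and with $\zeta^{(0)}:=\eta$. Since $\Phi(\eta)=\eta$, the sequence of iterates of $\eta$ is constant, so $\zeta^{(n)}=\eta$ for every $n\in\mathbb N_0$. Substituting into \eqref{contractioneq} therefore yields
\begin{equation*}
\bigl\|\xi_x^{(n)}(f,u)-\eta_x(f,u)\bigr\|_{L^p[0,1]}\leq\frac{C^nu^n}{n!}
\end{equation*}
for all $u\in[0,t]$, $f\in\mathrm{BM}_+[0,1]$ and $n\in\mathbb N$. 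Because $C^nu^n/n!$ is the $n$-th term of the convergent power series for $e^{Cu}$, it tends to $0$ as $n\to\infty$, which gives the claimed convergence of $\xi^{(n)}(f,u)$ to $\eta(f,u)$ at every fixed $(f,u)$ with $u\leq t$.

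Uniqueness of the fixed point in $\mathbb L^{[0,1]}$ then follows by the same mechanism: if $\tilde\eta\in\mathbb L^{[0,1]}$ were another fixed point of $\Phi$, applying Lemma~\ref{contractionlemma} to $\xi^{(0)}:=\eta$ and $\zeta^{(0)}:=\tilde\eta$ --- both iterate sequences again being constant --- would give
\begin{equation*}
\bigl\|\eta_x(f,u)-\tilde\eta_x(f,u)\bigr\|_{L^p[0,1]}\leq\frac{C^nu^n}{n!}\longrightarrow0
\end{equation*}
for every $n$, so $\eta=\tilde\eta$ as elements of $\mathbb L^{[0,1]}$. Continuity of $\Phi$ (Lemma~\ref{continuity}) is not strictly required for this shortcut, but it provides an alternative Picard-iteration viewpoint: $\Phi$ is a contraction in a suitable sense on $[0,t]$, and $\eta$ is the unique limit of its iterates.

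Since essentially all of the analytic content is pre-packaged in Lemmas~\ref{lemmawelldefined}--\ref{contractionlemma} together with Theorem~\ref{fixedpointtheorem}, no substantial obstacle remains. The only minor point of care concerns the reading of the word \emph{pointwise} in the statement: the convergence is pointwise in the pair $(f,u)\in\mathrm{BM}_+[0,1]\times[0,t]$, while the $x$-dependence is controlled in the $L^p[0,1]$-norm dictated by the metric $d_{\mathbb L^{[0,1]},p}$ of Definition~\ref{Laplace functional infdim}; choosing $p=\infty$ additionally delivers uniform, hence pointwise, convergence in the spatial variable $x$.
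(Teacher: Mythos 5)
Your proof is correct and is genuinely shorter and more economical than the paper's. The paper first shows (via Lemma~\ref{contractionlemma} applied to two arbitrary starting points) that all iterate sequences share a common limit $\mathscr L$, then invokes a functional L\'evy continuity theorem to verify $\mathscr L\in\mathbb L^{[0,1]}$, then uses continuity of $\Phi$ (Lemma~\ref{continuity}) to conclude $\mathscr L$ is a fixed point, and only at the very end identifies $\mathscr L=\eta$ by appealing to Theorem~\ref{fixedpointtheorem}. You instead take Theorem~\ref{fixedpointtheorem} as the starting point: since $\eta$ is already known to be a fixed point lying in $\mathbb L^{[0,1]}$, plugging $\zeta^{(0)}=\eta$ into Lemma~\ref{contractionlemma} gives $\zeta^{(n)}\equiv\eta$ and hence
$\bigl\|\xi_x^{(n)}(f,u)-\eta_x(f,u)\bigr\|_{L^p[0,1]}\leq C^nu^n/n!\to0$
directly, with uniqueness following by the same device applied to two fixed points. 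This avoids the L\'evy continuity theorem and the continuity lemma for $\Phi$ entirely. What the paper's longer route buys is that it would still establish existence of a limiting fixed point even if one did not already have Theorem~\ref{fixedpointtheorem}; in the presence of that theorem, your shortcut is cleaner. One small remark on the word \emph{pointwise}: your $L^p$-bound with $p=\infty$ gives essential-sup, hence a.e.\ pointwise, convergence in $x$; if one wants truly pointwise convergence for every $x$, note that the inductive estimate in the proof of Lemma~\ref{contractionlemma} is actually established for each fixed $x$ before being integrated, so the bound $\bigl|\xi_x^{(n)}(f,u)-\eta_x(f,u)\bigr|\leq C^nu^n/n!$ holds pointwise in $x$ as well.
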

\begin{proof}
By Lemma~\ref{contractionlemma}, iterates $\xi^{(n)},\zeta^{(n)}$ of two transforms $\xi^{(0)},\zeta^{(0)}\in\mathbb L^{[0,1]}$ converge pointwise and have the same limit $\mathscr L$. 
By Lemma~\ref{lemmawelldefined}, we know that those iterates are in $\mathbb L^{[0,1]}$ for any $n\in\mathbb N_0$. 
Next, for each $x\in[0,1]$, we have convergence of $\xi_x^{(n)}$ to $\mathscr L_x$, pointwise, as $n\to\infty$. 
This convergence holds in particular for all test functions $f$ in the Schwartz space on $[0,1]$. 
By treating a $[0,1]$-marked point process as a functional on $[0,1]$, we may apply a functional form of Lévy's continuity theorem, \cite{Levycontinuity}, Theorem~2.3, to conclude that there exists a graphon point process $Z_x$ such that $\mathscr L_x=\mathcal L_{Z_x}$, i.e., $\mathscr L_x\in \mathbb L$. 
In particular, for $Z=(Z_x)_{x\in[0,1]}\in\mathscr Z$, we have $\mathscr L=\mathcal L_{Z}$, meaning that $\mathscr L\in\mathbb L^{[0,1]}$.
Since $\Phi$ is continuous by Lemma~\ref{continuity}, we have
$$\mathscr L=\lim_{n\to\infty}\xi^{(n+1)}=\lim_{n\to\infty}\Phi\left(\xi^{(n)}\right)=\Phi\left(\lim_{n\to\infty}\xi^{(n)}\right)=\Phi(\mathscr L),$$ i.e., $\mathscr L$ is a fixed point of $\Phi$. 
From Theorem~\ref{fixedpointtheorem}, $\eta$ is also a fixed point of $\Phi$. 
As iterates of $\eta$ converge to $\mathscr L$, it follows that $\eta=\mathscr L$. 
\end{proof}

\subsection{Commuting large-time and high-dimension limits}\label{section interchanging limits}
In this subsection, we study large-time behavior of the graphon Hawkes birth-death process. 
Suppose that we start with multivariate Hawkes population processes $\tilde Q^d(\cdot)$ converging to a graphon Hawkes population process $Q(\cdot)$, where we first let $t\to\infty$, and then $d\to\infty$. 
We would like to know whether the resulting limit 
graphon process is the same as the stationary version of $Q$, i.e., whether we can interchange limits as illustrated by Figure~\ref{interchanging limits}. 
To answer this question, we view the prelimit as a piecewise constant graphon Hawkes process, so that we can employ the characterizations of the Laplace functional from the previous subsection to describe its distribution. 

\begin{figure}[!htbp]
\centering
\begin{tikzcd}
\tilde Q^d(t) \arrow{r}{d\to\infty} \arrow[swap]{d}{\substack{t\\\downarrow\\\infty}} & Q(t) \arrow{d}{\substack{t\\\downarrow\\\infty}} \\
\tilde Q^d(\infty) \arrow{r}{d\to\infty} & Q(\infty)
\end{tikzcd}
\caption{The limits $d\to\infty$ and $t\to\infty$, applied to $\tilde Q^d(t)$, commute.}\label{interchanging limits}
\end{figure}

\begin{assumption}\label{ass8}
Suppose that the baseline intensity $\lambda_\infty$ is bounded below by some $c_\infty$ on the set $\{x:\lambda_\infty(x)>0\}=\{x:\lambda_\infty(x)\geq c_\infty\}$. 
Suppose that $\int_0^\infty th(t)\ \mathrm dt<\infty$. 
Finally, assume that there is a single lifetime distribution $J$, independent of the particle location $x\in[0,1]$, satisfying $\mathbb E[J]<\infty$. 
Write $\mathscr J,\bar{\mathscr J}$ for the corresponding survival function and c.d.f., respectively.
\end{assumption}

We impose the (mild) assumption on the lower bound on $\lambda_\infty$ in Theorem~\ref{th interchanging limits} below for technical reasons. 
Furthermore, we require the assumption of finite expected time between parent and child, $\int_0^\infty th(t)\ \mathrm dt<\infty$, because we use boundedness of the cluster durations. 
Finally, we use the assumption on the lifetime distributions in order to couple lifetimes of prelimit and graphon Hawkes processes.

\begin{theorem}\label{th interchanging limits}
Grant Assumptions~\ref{ass1}, \ref{ass cw cb}, \ref{ass7}, \ref{ass6}, \ref{separable lifetimes} and \ref{ass8}, and suppose that the stability conditions of Theorem~\ref{thm stability general X spectral radius} hold.
Take a linear graphon Hawkes birth-death process $Q(t)$ and consider its multivariate projections $\tilde Q^d(t)$, using partitions $\mathcal P^d$ of $[0,1]$ with $\mathrm{mesh}(\mathcal P^d)\to0$, as $d\to\infty$.
Assume that the lifetime distributions for $\tilde Q^d$ are the same as those of $Q$, with coupled lifetimes for simultaneous events.
Then the stationary version $\tilde Q^d(\infty)$ of $\tilde Q^d(t)$ converges weakly in the space of all point measures on $[0,\infty)\times[0,1]$ to $Q(\infty)$, as $d\to\infty$. 
\end{theorem}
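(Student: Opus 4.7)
The strategy is to characterize $Q(\infty)$ and each $\tilde Q^d(\infty)$ through their Laplace functionals and pass to the limit via dominated convergence. Starting from Theorem~\ref{transform Q S_x} applied to $Q$ initialised from an empty history at time $-t$ and letting $t \to \infty$, the stability condition $\rho(T_{\mathrm{hom}})<1$ combined with $\int_0^\infty th(t)\, \mathrm dt < \infty$ and $\mathbb E[J] < \infty$ (Assumption~\ref{ass8}) yield
\[
\mathcal L_{Q(\infty)}(f) = \exp\left(\int_0^1 \int_0^\infty \bigl(\eta_x(f, u) - 1\bigr)\, \lambda_\infty(x)\, \mathrm du\, \mathrm dx\right),
\]
with the analogous formula for $\mathcal L_{\tilde Q^d(\infty)}$ in terms of $\tilde\eta^d_x$ and $\tilde\lambda^d_\infty$; here Remark~\ref{remark multivariate hawkes} lets us view $\tilde Q^d$ as a graphon Hawkes birth-death process with piecewise constant parameters. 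By the functional Lévy continuity theorem (\cite{Levycontinuity}, Theorem~2.3), it suffices to show pointwise convergence $\mathcal L_{\tilde Q^d(\infty)}(f) \to \mathcal L_{Q(\infty)}(f)$ for every admissible test function $f$.

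For fixed $u \geq 0$ and $f$, I would first establish $\tilde\eta^d_x(f, u) \to \eta_x(f, u)$ for a.e.\ $x \in [0, 1]$. Here $\eta_x(f, u)$ is the Laplace functional at time $u$ of the birth-death cluster process $S_x$ initiated by a single particle at $x$, and analogously for $\tilde\eta^d_x$. A birth-death version of Theorem~\ref{annealed theorem} restricted to $[0, u]$ --- obtained by coupling the i.i.d.\ lifetimes of simultaneous events via a shared source of randomness, valid since $J$ is spatially homogeneous under Assumption~\ref{ass8} --- yields ucp convergence $\tilde S^d_x \to S_x$, hence convergence of the corresponding Laplace functionals by bounded convergence. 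Combined with $\tilde\lambda^d_\infty \to \lambda_\infty$ in $L^1[0, 1]$ from Lemma~\ref{TVlemma4}, the integrand of the displayed formula converges pointwise almost everywhere.

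For a uniformly integrable dominating function, I would use $|\eta_x(f, u) - 1| \leq \mathbb P(S_x(u) \neq \emptyset) \leq \mathbb E|S_x(u)|$ together with the Fubini identity
\[
\int_0^\infty \mathbb E|S_x(u)|\, \mathrm du = \mathbb E[J]\cdot \mathbb E[Z_x] \leq \mathbb E[J]\cdot \mathfrak K,
\]
obtained by noting that each particle contributes its lifetime to the $u$-integral, exploiting independence of lifetimes from the branching structure, and invoking the uniform cluster-size bound $\mathfrak K$ from Lemma~\ref{lemma offspring size}. The same lemma ensures this bound holds uniformly for $\tilde S^d_x$ when $d \geq D$, while $\tilde\lambda^d_\infty$ inherits the $L^\infty_{\mathrm{loc}}$-bound of $\lambda_\infty$ from Assumption~\ref{ass cw cb}. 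Dominated convergence then yields $\mathcal L_{\tilde Q^d(\infty)}(f) \to \mathcal L_{Q(\infty)}(f)$.

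The principal technical obstacle is making the birth-death extension of Theorem~\ref{annealed theorem} precise: since lifetimes are i.i.d.\ and independent of the birth dynamics, they can be attached to birth events via shared randomization, so discrepancies in a birth-death $\mathfrak d$-metric (extended to include matched-event lifetimes) are controlled by those in the pure-birth $\mathfrak d$-metric of Definition~\ref{metric on spatiotemporal}, and the ucp estimates transfer essentially verbatim.
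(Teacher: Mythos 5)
Your proposal follows the same overall route as the paper---characterize the stationary transforms via Theorem~\ref{transform Q S_x}, show $\tilde\eta^d_x(f,u)\to\eta_x(f,u)$ pointwise by coupling to the approximating processes of Theorem~\ref{annealed theorem}, and pass to the limit in the double integral. The Wald--Fubini identity
\[
\int_0^\infty \mathbb E\,|S_x(u)|\ \mathrm du=\mathbb E[J]\cdot\mathbb E[Z_x]\leq\mathbb E[J]\cdot\mathfrak K
\]
is a clean and correct observation, and in fact it handles the term controlled by \eqref{to be bounded1} in the paper's decomposition more transparently than the paper itself does (the paper instead infers finiteness of the $u$-integral from positivity of $\tilde{\mathcal L}^d_Q(f,\infty)$ and divides out the lower bound $c_\infty$ on $\lambda_\infty$).

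However, the final ``dominated convergence'' step has a genuine gap. A bound $\sup_{x,d}\int_0^\infty \mathbb E\,|\tilde S_x^d(u)|\ \mathrm du\leq\mathbb E[J]\,\mathfrak K$ is a \emph{uniform $L^1$-bound in $d$}, which is neither a dominating function nor a sufficient condition for uniform integrability of the family $(x,u)\mapsto|\tilde\eta_x^d(f,u)-1|\,\tilde\lambda_\infty^d(x)$ over $d\geq D$. (A standard counterexample: $g_d(u)=d\,\mathbf1_{[0,1/d]}(u)$ has $\|g_d\|_{L^1}=1$ for all $d$, yet admits no integrable majorant and is not uniformly integrable.) Since $\mathbb E\,|\tilde S_x^d(\cdot)|$ depends on $d$, you cannot take this as the dominating function in $u$. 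To repair the argument you would need either a single $d$-uniform integrable majorant $g(u)\geq\mathbb E\,|\tilde S_x^d(u)|$ for all $d\geq D$ and a.e.\ $x$, or a uniform (in $d$) tightness estimate showing that the tail mass $\int_0^1\int_T^\infty|\tilde\eta_x^d(f,u)-1|\tilde\lambda_\infty^d(x)\,\mathrm du\,\mathrm dx$ tends to $0$ as $T\to\infty$ uniformly over $d$. This is precisely where the paper invokes uniform tightness of the cluster durations (a consequence of $\int_0^\infty t\,h(t)\,\mathrm dt<\infty$ and the uniform cluster size bound $\mathfrak K$), and it is why the paper proceeds via the explicit two-term splitting into \eqref{to be bounded1} and \eqref{to be bounded2} with a uniform-in-$x$ bound on $\int_0^\infty|\tilde\eta_x^d-\eta_x|\ \mathrm du$, rather than a blanket appeal to dominated convergence. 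Until you supply the missing uniform tightness (or majorant), the interchange of $\lim_{d\to\infty}$ and $\int_0^1\int_0^\infty$ is not justified.
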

 
\begin{proof} 
We prove weak convergence of $\tilde Q^d(\infty)$ to $Q(\infty)$ by proving pointwise convergence of the corresponding Laplace functionals $\tilde{\mathcal L}_{Q}^d(f,\infty),\mathcal L_Q(f,\infty)$, using \cite{DaleyVereJones}, Proposition~11.1.VIII. 
We do this by invoking Theorems~\ref{transform Q S_x} and~\ref{fixedpointtheorem}, after which we bound the difference $|\tilde{\mathcal L}_Q^d(f,\infty)-\mathcal L_Q(f,\infty)|$ by two integral terms, which we treat separately.

Using Theorems~\ref{transform Q S_x} and~\ref{fixedpointtheorem}, we characterize the transform of the prelimit for $t\to\infty$ as 
\begin{equation}\label{prelimit Laplace characterization}
\tilde{\mathcal L}^d_Q(f,\infty)=\exp\left(\sum_{k=1}^d\lambda_{\infty,k}^d\int_{\mathscr X_k^d}\int_0^\infty(\tilde\eta_x^d(f,u)-1)\ \mathrm du\ \mathrm dx\right),
\end{equation}
where for $x\in \mathscr X_m^d$ we have, by recognizing the Laplace functional $\beta_{km}^d$ of the marks $B_{km}^d$,
\begin{align}\label{prelimit fixedpoint}
\nonumber\tilde\eta_x^d(f,u)&=\gamma_x(f,u)\mathbb E\left[\exp\left(\sum_{k=1}^dB_{km}^d(0)W_{km}^d\int_{\mathscr X_k^d}\int_0^u(\tilde\eta_y^d(f,u-s)-1)h(s)\ \mathrm ds\ \mathrm dy\right)\right]\nonumber\\
&=\gamma_x(f,u)\prod_{k=1}^d\beta_{km}^d\left(W_{km}^d\int_{\mathscr X_k^d}\int_0^u(\tilde\eta_y^d(f,u-s)-1)h(s)\ \mathrm ds\ \mathrm dy\right).
\end{align}
Here, $\gamma_x(f,u)=\bar{\mathscr J}(u)+\mathscr J(u)e^{-f(x)}$, cf.\ \eqref{def10gamma}.
From the convergence of the processes $\tilde N^d$  to $N$, as $d\to\infty$, for finite time horizons, 
we conclude the convergence $\tilde\eta^d_x(f,u)\to\eta_x(f,u)$ pointwise, as $d\to\infty$, since the transforms up to time $u$ only depend on smaller times $0\leq s\leq u$; or, alternatively, since the convergence for single clusters is implied by the convergence of the whole process (on bounded time intervals). 

We show that $|\tilde{\mathcal L}_Q^d(f,\infty)-\mathcal L_Q(f,\infty)|$ tends to $0$ as $d\to\infty$, for each $f$, since pointwise convergence of Laplace functionals is equivalent to weak convergence of random measures, see \cite{DaleyVereJones}, Proposition~11.1.VIII. 
It suffices to restrict ourselves to positive, continuous functions $f\in C_+[0,1]$ instead of $f\in\mathrm{BM}_+[0,1]$, see, e.g., \cite{nonlifeinsurancemath}, Theorem~9.1.4. 
Such a function $f$ is uniformly continuous on $[0,1]$ and uniformly bounded by some $C_f\in\mathbb R_+$.

Note that by the mean value theorem and the triangle inequality
\begin{align}\nonumber
&|\tilde{\mathcal L}_Q^d(f,\infty)-\mathcal L_Q(f,\infty)|\\\nonumber&=\Bigg|\exp\left(\int_0^1\sum_{k=1}^d\mathbf1\{x\in \mathscr X_k^d\}\lambda_{\infty,k}^d\int_0^\infty(\tilde\eta_x^d(f,u)-1)\ \mathrm du\ \mathrm dx\right)\\\nonumber
&\phantom{....}-\exp\left(\int_0^1\int_0^\infty(\eta_x(f,u)-1)\lambda_\infty(x)\ \mathrm du\ \mathrm dx\right)\Bigg|\\\nonumber
&\leq\left|\int_0^1\int_0^\infty\left[\sum_{k=1}^d\mathbf1\{x\in \mathscr X_k^d\}\lambda_{\infty,k}^d(\tilde \eta_x^d(f,u)-1)-\lambda_\infty(x)(\eta_x(f,u)-1)\right]\ \mathrm du\ \mathrm dx\right|\\\label{to be bounded1}
&\leq\int_0^1\left|\sum_{k=1}^d\mathbf1\{x\in \mathscr X_k^d\}\lambda_{\infty,k}^d-\lambda_\infty(x)\right|\int_0^\infty\left|\tilde \eta_x^d(f,u)-1\right|\ \mathrm du\ \mathrm dx\\\label{to be bounded2}
&\phantom{=.}+\int_0^1\int_0^\infty\lambda_\infty(x)\left|\eta_x(f,u)-\tilde\eta_x^d(f,u)\right|\ \mathrm du\ \mathrm dx.
\end{align} 
We bound the integrals appearing in \eqref{to be bounded1} and \eqref{to be bounded2} separately.

For \eqref{to be bounded1}, under the stability conditions, the collection of cluster durations $(D_x)_{x\in[0,1]}$ of the clusters $(S_x)_{x\in[0,1]}$ is a uniformly tight collection of random variables, since the expected cluster sizes are uniformly bounded by $\mathfrak K$, by Lemma~\ref{lemma offspring size}, and since the excitation function is not dependent on the spatial coordinate. 
Here, we use our assumption $\int_0^\infty th(t)\ \mathrm dt<\infty$ in order to have times between the birth of a parent and the birth of a child --- conditionally upon the creation of a child ---
of bounded expectation.
Under the conditions of Theorem~\ref{thm stability general X spectral radius}, we have a well-defined, stable system, assuring that $\tilde{\mathcal L}^d_Q(f,\infty)>0$ for $d$ sufficiently large, by Lemma~\ref{lemma offspring size}. 
From the formula \eqref{queue into cluster laplace} for $\tilde{\mathcal L}^d_Q(f,\infty)$, it follows that $\int_0^\infty\left|\tilde \eta_x^d(f,u)-1\right|\ \mathrm du$ is convergent a.e.\ on $\{x:\lambda_\infty(x)>0\}$, since by Assumption~\ref{ass8} the baseline intensity $\lambda_\infty$ is bounded below by some $c_\infty$ on the set $\{x:\lambda_\infty(x)>0\}$;
here we use that $f$ is a \emph{positive} function. 
By this reasoning, we can even find a bound on $\int_0^\infty\left|\tilde\eta_x^d(f,u)-1\right|\ \mathrm du$ uniformly over $x\in\{y:\lambda_\infty(y)>0\}$.
It then follows by the $L^1$ convergence of $\tilde\lambda_\infty^d$ to $\lambda_\infty$ (Lemma~\ref{TVlemma4}) that \eqref{to be bounded1} tends to $0$, as $d\to\infty$.


Next, to bound \eqref{to be bounded2} by $\epsilon$, it suffices to bound 
$\int_0^\infty\left|\eta_x(f,u)-\tilde\eta_x^d(f,u)\right|\ \mathrm du$ by $\epsilon/\alpha$, uniformly over $x\in[0,1]$. 
By the mean value theorem,
\begin{align*}
\int_0^\infty\left|\eta_x(f,u)-\tilde\eta_x^d(f,u)\right|\ \mathrm du&=
\int_0^\infty\left|\mathbb E\left[\exp(-f(S_x,u))\right]-\mathbb E[\exp(-f(\tilde S_x^d,u))]\right|\ \mathrm du\\
&\leq\int_0^\infty\mathbb E\left|\exp(-f(S_x,u))-\exp(-f(\tilde S_x^d,u))\right|\ \mathrm du\\
&\leq \int_0^\infty\mathbb E\left|f(S_x,u)-f(\tilde S_x^d,u)\right|\ \mathrm du.
\end{align*}

Using Lemma~\ref{lemma offspring size}, we select $D\in\mathbb N$, $\mathfrak K\in\mathbb R_+$ such that the expected cluster sizes of $N$, $\tilde N^d$ are uniformly bounded by $\mathfrak K$, uniformly over $d\geq D$, $x\in[0,1]$. 
Then the expected discrepancy in $\mathfrak d$-distance between a cluster $S_x$ and its coupled cluster $\tilde S_x^d$ of $\tilde N^d$ can be bounded as in (ii) from the proof of Theorem~\ref{annealed theorem}: we can find a $d_1\geq D$ such that for $d\geq d_1$ the expected discrepancy in $\mathfrak d$-distance between the two coupled clusters is bounded by $$\epsilon':=\frac{\epsilon}{4\alpha C_f\mathfrak K\mathbb E[J]},$$ hence with probability of at least $1-\epsilon'$, the two clusters $S_x$ and $\tilde S_x^{d}$ only consist of simultaneous events. 
Now we use the uniform continuity of $f$ to find $\delta>0$ such that $$|f(x)-f(y)|<\epsilon'':=\frac{\epsilon}{2\alpha\mathfrak K\mathbb E[J]}$$ for $|x-y|<\delta$. 
Next, we select $d_2\geq d_1$ such that $\mathrm{mesh}(\mathcal P^{d_2})<\delta$.

Given that we only have simultaneous events within the coupled clusters $S_x$ and $\tilde S_x^{d}$, which is an event with probability of at least $1-\epsilon'$ for $d\geq d_2$, $\int_0^\infty\mathbb E|f(S_x,u)-f(\tilde S_x^d,u)|\ \mathrm du$ can be bounded by the product of: the expected number of events within the cluster $S_x$, the expected duration of an event, and $\epsilon''$ ---  since the simultaneous events are within the same element $\mathscr X_n^d$ of the partition $\mathcal P^d$. 
Here, we use the uniform continuity argument from the previous paragraph and the fact that simultaneous events are in the same $\mathscr X_n^d$, which follows by the construction presented in Section~\ref{section existence etc}.
Hence, given this event with probability of at least $1-\epsilon'$, $\int_0^\infty\mathbb E|f(S_x,u)-f(\tilde S_x^d,u)|\ \mathrm du$ can be bounded by $\mathfrak K\mathbb E[J]\epsilon''<\epsilon/(2\alpha)$. 

Next, with probability of at most $\epsilon'$, the clusters $S_x$ and $\tilde S_x^{d}$ are not equal, in which case, similarly as before, for $d\geq D$, $\int_0^\infty\mathbb E|f(S_x,u)-f(\tilde S_x^d,u)|\ \mathrm du$ can be bounded by $2C_f\mathfrak K\mathbb E[J]$. 
To conclude, for $d\geq d_2$, $\int_0^\infty\left|\eta_x(f,u)-\tilde\eta_x^d(f,u)\right|\ \mathrm du$ can be bounded by $\epsilon/\alpha$.

This proves that \eqref{to be bounded2} is bounded by $\epsilon$, for $d\geq d_2$. 
We already showed that \eqref{to be bounded1} tends to $0$, as $d\to\infty$, hence pointwise convergence of $\tilde{\mathcal L}_Q^d(f,\infty)$ to $\mathcal L_Q(f,\infty)$ follows, for $f\in C_+[0,1]$. 
Invoking again \cite{nonlifeinsurancemath}, Theorem~9.1.4, it follows that $\tilde Q^d(\infty)\to Q(\infty)$ weakly in the space of all point measures on $[0,\infty)\times[0,1]$, as $d\to\infty$. 
\end{proof}

\section{Concluding remarks}
In this paper, we have introduced the \textit{graphon Hawkes process}. 
Working on a corresponding uncountable space introduces the need for functional-analytic techniques that are unnecessary in the discrete space setting. 
After establishing its existence, uniqueness and stability, this graphon Hawkes process is shown to occur as the limit of suitably chosen multivariate Hawkes processes. 
Along the way, we have established technical Lemmas~\ref{lemma offspring size} and \ref{TVlemma4}, which may be interesting in their own right. 
Next, we have provided a dichotomy in large-time behavior depending on whether $\rho(T_{\mathrm{hom}})<1$ or $\rho(T_{\mathrm{hom}})>1$, and we have established an FLLN and an FCLT in the stable case. 
By reducing the problem to a finite-dimensional setting, our proofs of the FLLN and FCLT become more tractable, facilitating the derivation of functional limit theorems for infinite-dimensional systems. 
This method, given its effectiveness, may also benefit other infinite-dimensional models. 
Finally, we have characterized the distribution of the graphon Hawkes process by fixed-point equations in the transform domain, and we have used this characterization to establish that, for convergent multivariate Hawkes population processes $\tilde Q^d(t)$ in the stable regime, the limits $d\to\infty$ and $t\to\infty$ commute, an application that we believe is both interesting and insightful.

Several follow-up questions and extensions are conceivable.
\begin{itemize}
\item As we have shown, it is possible to define the graphon Hawkes process for nonlinear conditional intensity density functions. 
We wonder whether the convergence results of Theorems~\ref{annealed theorem} and \ref{quenched theorem} carry over to the nonlinear case. 
This would, however, require an intrinsically different approach than the one followed in this work, since one loses the branching representation and the additive structure of the conditional intensity density.
If we consider nonlinear models with $f_x(z)=f(\lambda_\infty(x)+z)$ for a \emph{single} Lipschitz continuous function $f$, then by a straightforward modification of the proof of Theorem~\ref{annealed theorem} --- first bounding by $c_x\equiv c$, the Lipschitz constant of $f$ ---, we are able to prove convergence of the multivariate processes, obtained by averaging parameters, to the corresponding graphon Hawkes processes. 
We note that nonlinearity of the form $c_x\equiv c$ is often assumed in related literature; see e.g., \cite{Agathe-Nerine}.
\item The functional limit theorems in the stable regime exploit the convergence results of Section~\ref{section convergence results} and related results for multivariate Hawkes processes. 
We would be interested in direct proofs for our FLLN and FCLT as well, because those proofs might be interesting in their own rights. 
\item In this paper, we have established both an FLLN and an FCLT. 
Given those limit theorems, one may be interested in establishing 
\emph{large deviation principles} for the graphon Hawkes process. 
Since we are working with an infinite-dimensional model, we expect that this requires more sophisticated, functional-analytic tools than those required for finite-dimensional Hawkes processes (\cite{ldpKLM}). 
We intend to pursue this in future work.

\end{itemize}

\section*{Appendix A: Relegated proofs of and supplement to Sections~\ref{sectiondefs} and~\ref{section existence etc}}\label{app A}

{\sc Proof of Lemma~\ref{sampling lemma}}.
Denote the random spatial coordinate by $X$. 
Under procedure~(i),
\begin{align*}
\mathbb P(X\in\mathrm dx)&=\frac{\|\lambda^1(\cdot)\|_{L^1(\mathscr X)}}{\|\lambda^1(\cdot)+\lambda^2(\cdot)\|_{L^1(\mathscr X)}}\frac{\lambda^1(x)\mathrm dx}{\|\lambda^1(\cdot)\|_{L^1(\mathscr X)}}+\frac{\|\lambda^2(\cdot)\|_{L^1(\mathscr X)}}{\|\lambda^1(\cdot)+\lambda^2(\cdot)\|_{L^1(\mathscr X)}}\frac{\lambda^2(x)\mathrm dx}{\|\lambda^2(\cdot)\|_{L^1(\mathscr X)}}\\
&=\frac{\lambda^1(x)\mathrm dx+\lambda^2(x)\mathrm dx}{\|\lambda^1(\cdot)+\lambda^2(\cdot)\|_{L^1(\mathscr X)}}=\frac{\lambda(x)\mathrm dx}{\|\lambda(\cdot)\|_{L^1(\mathscr X)}},
\end{align*} 
which is the law used under procedure~(ii).
$\hfill\Box$\newline

{\sc Proof of Theorem~\ref{thm stability general X spectral radius}}.
First, the proof constructs a Picard scheme to establish existence. 
Next, for the uniqueness part, we only prove the key modifications required in our general setting compared to \cite{Massoulie}.

\emph{Existence}. We use Picard's iteration to construct mappings $\{\lambda^n(t,z)\}$ and spatiotemporal point processes $N^n$, for $n\in\mathbb N_0$. 
Here, $z=(z_1,z_2,z_3)=(B,i,U)$ denotes the mark random variable: the first component denotes the random mark process $B$; 
the second component denotes the site index $z_2=i$, defined on $(\mathbb N,\mathcal P(\mathbb N),\sigma)$, where $\sigma$ denotes the counting measure; and the final component denotes the sequences of random variables $(U_i)_{i\in\mathbb N}$ determining the spatial coordinate; see the construction in Section~\ref{section existence etc}. 
Let $L\ni z$ denote the mark space. 
For the Picard scheme, we set $\lambda^0(t,z,x)\equiv0$, $\Lambda^0(t,z)\equiv 0$, we let every $N^n$ coincide with $N$ on $\mathbb R_-\times L$, and for $n\in\mathbb N_0$, $t>0$ and $z=(B,i,U)\in L$,
\begin{align}
N^n(\mathrm dt\times\mathrm dz)&=\bar N^i\left(\mathrm dt\times\mathrm d(B\times U)\times\left[0,\Lambda^n(t,i)\right]\right);\\
\Lambda^{n+1}(t,i)&=\int_{\mathscr X_i}\lambda^{n+1}(t,x)\ \mathrm dx\\&=\int_{\mathscr X_i}f_x\left(\lambda_\infty(x)+\sum_{\substack{(s,B,y)\in N^n\\s<t}}B_{xy}(s)W(x,y)h(t-s)\right)\ \mathrm d\mu(x),\nonumber
\end{align}
with the understanding that the locations $y$ are determined by the site index and by the random marks $U$, as outlined in Section~\ref{section existence etc}.
Then it holds that
\begin{align}
&\sup_{t\geq0,i\in\mathbb N}\mathbb E\int_{\mathscr X_i}\left|\lambda^{n+1}(t,x)-\lambda^{n}(t,x)\right|\ \mathrm d\mu(x)\nonumber\\
&\leq\sup_{t\geq0,i\in\mathbb N}\mathbb E\int_{\mathscr X_i}\int_{(0,t)}\int_{\mathscr X}c_xB_{xy}(s)W(x,y)h(t-s)\ |N_s^n-N_s^{n-1}|(\mathrm ds\times\mathrm dy)\ \mathrm d\mu(x)\nonumber\\
&\leq\sup_{s\geq0,i\in\mathbb N}\int_{\mathscr X_i}\|h\|_{L^1(\mathbb R_+)}c_x\int_{\mathscr X}\mathbb E[B_{xy}]W(x,y)\mathbb E\left|\lambda^n(s,y)-\lambda^{n-1}(s,y)\right|\ \mathrm dy\ \mathrm d\mu(x)\nonumber\\
&=\sup_{s\geq0,i\in\mathbb N}\int_{\mathscr X_i}T_{\mathrm{hom}}\left(\mathbb E\left|\lambda^n(s,y)-\lambda^{n-1}(s,y)\right|\right)(x)\ \mathrm d\mu(x)\nonumber\\
&\leq\cdots\leq\sup_{t\geq0,i\in\mathbb N}\int_{\mathscr X_i}T_{\mathrm{hom}}^n\left(\mathbb E\lambda^1(t,\cdot)\right)(x)\ \mathrm d\mu(x)\nonumber\\
&=\sup_{t\geq0,i\in\mathbb N}\int_{\mathscr X_i}T_{\mathrm{hom}}^n(f_\cdot(\lambda_\infty(\cdot)+\eta(t,\cdot)))(x)\ \mathrm d\mu(x)\nonumber\\
&=\sup_{t\geq0,i\in\mathbb N}\|T_{\mathrm{hom}}^n(f_\cdot(\lambda_\infty(\cdot)+\eta(t,\cdot)))\|_{L^1(\mathscr X_i,\mu)}\nonumber\\
&\leq\|T_{\mathrm{hom}}^n\|\sup_{t\geq0,i\in\mathbb N}\|f_\cdot(\lambda_\infty(\cdot)+\eta(t,\cdot))\|_{L^1(\mathscr X_i,\mu)}\to0\label{exponential convergence Picard scheme}
\end{align} 
exponentially, as $n\to\infty$, using Gelfand's formula and our assumption that $\rho(T_{\mathrm{hom}})<1$.
More specifically, by Gelfand's formula we can find some $r\in(\rho(T_{\mathrm{hom}}),1)$ and some $K\in\mathbb N$ such that for all $n\geq K$ it holds that $\|T_{\mathrm{hom}}^n\|\leq r^n$. 
For such large $n\geq K$, it thus holds that $$\sup_{t\geq0,i\in\mathbb N}\mathbb E\int_{\mathscr X_i}\left|\lambda^{n+1}(t,x)-\lambda^{n}(t,x)\right|\ \mathrm d\mu(x)\leq Cr^n.$$ 
Letting $r'\in(r,1)$, Markov's inequality gives that $$\sup_{t\geq0,i\in\mathbb N}\mathbb P\left(\int_{\mathscr X_i}\left|\lambda^{n+1}(t,x)-\lambda^{n}(t,x)\right|\ \mathrm d\mu(x)\geq (r')^n\right)\leq C\left(\frac r{r'}\right)^n.$$ 
By an application of the Borel-Cantelli lemma, it holds that, with probability $1$, only finitely many of those events $$\left\{\int_{\mathscr X_i}\left|\lambda^{n+1}(t,x)-\lambda^{n}(t,x)\right|\ \mathrm d\mu(x)\geq (r')^n\right\}$$ occur, hence for any $t\geq0,i\in\mathbb N$, it holds that, with probability $1$, $(\lambda^n(t,\cdot))_{n\in\mathbb N_0}$ is a Cauchy sequence in $L^1(\mathscr X_i,\mu)$; by completeness, this sequence has some ($t$-uniform) $L^1(\mathscr X_i,\mu)$-limit $\lambda(t,\cdot)$.

Note that our limit process $\lambda(t,x)$ is positive a.s. 
Furthermore, it satisfies
\begin{align}
\sup_{t\geq0,i\in\mathbb N}\mathbb E\int_{\mathscr X_i}\lambda(t,x)\ \mathrm d\mu(x)&\leq\sup_{t\geq0,i\in\mathbb N}\mathbb E\left[\sum_{n\geq0}\int_{\mathscr X_i}|\lambda^{n+1}(t,x)-\lambda^n(t,x)|\ \mathrm d\mu(x)\right] \nonumber\\
&\leq (1-r)^{-1}\sup_{t\geq0,i\in\mathbb N}\mathbb E\left[\int_{\mathscr X_i}|\lambda^K(t,x)-\lambda^0(t,x)|\ \mathrm d\mu(x)\right]\nonumber\\
&= (1-r)^{-1}\sup_{t\geq0,i\in\mathbb N}\mathbb E\int_{\mathscr X_i}\lambda^K(t,x)\ \mathrm d\mu(x)\nonumber\\
&\leq(1-r)^{-1}\|T_{\mathrm{hom}}\|^{K-1}\sup_{t\geq0,i\in\mathbb N}\mathbb E\int_{\mathscr X_i}\lambda^1(t,x)\ \mathrm d\mu(x)\nonumber\\
&\leq (1-r)^{-1}\|T_{\mathrm{hom}}\|^{K-1} \cdot C<\infty.\label{bounded mean intensity}
\end{align}
This means that the conditional intensity for the arrival of some point anywhere in $\mathscr X_i$ is a.s.\ bounded, uniformly over $t\in\mathbb R_+,i\in\mathbb N$.

Next, take some set $D\in\mathcal L_1\otimes\mathcal P(\mathbb N)\otimes\mathcal L_2$ with $(\mathcal Q_1\times\sigma\times Q_2)(D)<\infty$. 
Then, 
\begin{align*}
&\sum_{n\geq0}\mathbb P\left(|N^{n+1}-N^n|((0,T]\times D)\neq0\right)\leq\sum_{n\geq0}\mathbb E|N^{n+1}-N^n|((0,T]\times D)\\
&\leq\mathbb Q_2(D)\sum_{n\geq0}\int_{(0,T]}\sup_{i\in\mathbb N}\mathbb E\int_{\mathscr X_i}|\lambda^{n+1}(t,x)-\lambda^n(t,x)|\ \mathrm d\mu(x)\ \mathrm dt\\
&=\mathbb Q_2(D)\int_{(0,T]}\sum_{n\geq0}\sup_{i\in\mathbb N}\mathbb E\int_{\mathscr X_i}|\lambda^{n+1}(t,x)-\lambda^n(t,x)|\ \mathrm d\mu(x)\ \mathrm dt\\
&\leq T\mathbb Q_2(D)\sum_{n\geq0}\sup_{t\geq0,i\in\mathbb N}\mathbb E\int_{\mathscr X_i}|\lambda^{n+1}(t,x)-\lambda^n(t,x)|\ \mathrm d\mu(x)<\infty,
\end{align*}
by the exponential convergence in (\ref{exponential convergence Picard scheme}). 
The interchange of the summation and integration is justified by monotone convergence. 
By Borel-Cantelli, it follows that, with probability $1$, on $(0,T]\times D$, the processes $N^n$ are eventually constant, and in this sense they converge to some limiting point process $N$, as $n\to\infty$. 
From our Picard scheme, it follows that on $(0,T]\times D$, with probability $1$, the mappings $\lambda^n$ are eventually constant as well. 
This means that the spatial coordinate as determined from $\lambda^n$, for sufficiently large $n$, corresponds to the one determined by $\lambda$. 
Furthermore, by Lemma~\ref{sampling lemma}, we can define spatial coordinates in a meaningful way even when we only have $\lambda^n$.

The limiting process $N$ satisfies 
\begin{equation}N(\mathrm dt\times\mathrm d(B\times i\times U))=\bar N^i\left(\mathrm dt\times\mathrm d(B\times U)\times\left[0,\int_{\mathscr X_i}\lambda(t,x)\ \mathrm d\mu(x)\right]\right).
\end{equation} 
Indeed, for $T>0$ and $D\in\mathcal L_1\otimes\mathcal P(\mathbb N)\otimes\mathcal L_2$ with $(\mathcal Q_1\times\sigma\times Q_2)(D)<\infty$, by Fatou's lemma,
\begin{align}
&\mathbb E\int_{(0,T]\times D}\Bigg|N(\mathrm dt\times\mathrm d(B\times i\times U))-\bar N^i\left(\mathrm dt\times\mathrm d(B\times U)\times\left[0,\int_{\mathscr X_i}\lambda(t,x)\ \mathrm d\mu(x)\right]\right)\Bigg|\nonumber\\
&\leq \lim_{n\to\infty}\mathbb E\int_{(0,T]\times D}\bigg|\bar N^i\left(\mathrm dt\times\mathrm d(B\times U)\times\left[0,\int_{\mathscr X_i}\lambda^n(t,x)\ \mathrm d\mu(x)\right]\right)\nonumber\\
&\phantom{\leq\   \lim_{n\to\infty}\int_{(0,T]\times D}}-\bar N^i\left(\mathrm dt\times\mathrm d(B\times U)\times\left[0,\int_{\mathscr X_i}\lambda(t,x)\ \mathrm d\mu(x)\bigg]\right)\right|\nonumber\\
&\leq T\mathbb Q_2(D)\lim_{n\to\infty}\sup_{t\geq0,i\in\mathbb N}\mathbb E\int_{\mathscr X_i}|\lambda^n(t,x)-\lambda(t,x)| \ \mathrm d\mu(x)=0.
\end{align} 

Finally, we verify that the limiting process $\lambda$ is indeed the intensity density of $N$. 
Let $$\breve\lambda(t,x):=f_x\left(\lambda_\infty(x)+\sum_{\substack{(s,B,y)\in N\\s<t}}B_{xy}(s)W(x,y)h(t-s)\right)$$ be the intensity density that we expect to find. 
For each $i\in\mathbb N$, we have 
\begin{align}
&\mathbb E\int_{\mathscr X_i}|\lambda(t,x)-\breve\lambda(t,x)|\ \mathrm d\mu(x)\nonumber\\
&\leq\mathbb E\int_{\mathscr X_i}|\lambda(t,x)-\lambda^{n}(t,x)|\ \mathrm d\mu(x)\nonumber\\&+\mathbb E\int_{\mathscr X_i}\int_{(0,t)}\int_{\mathscr X}c_xB_{xy}(s)W(x,y)h(t-s)\ |N_s-N_s^{n-1}|(\mathrm ds\times\mathrm dy)\ \mathrm d\mu(x)\nonumber\\
&\leq\mathbb E\int_{\mathscr X_i}|\lambda(t,x)-\lambda^{n}(t,x)|\ \mathrm d\mu(x)\nonumber\\
&+\|h\|_{L^1(\mathbb R_+)}\int_{\mathscr X_i}c_x\mathbb E[B_{xy}]W(x,y)\ \mathrm d\mu(x)\sup_{t\geq0}\mathbb E\int_{\mathscr X}|\lambda(t,y)-\lambda^{n-1}(t,y)|\ \mathrm d\mu(y).
\end{align}
The right-hand side goes to $0$ uniformly in $t$ and in $i\in\mathbb N$, as $n\to\infty$, hence $(N,\lambda)$ is a solution to \eqref{N wrt PRM}, as desired. 
It satisfies $\sup_{t\geq0,i\in\mathbb N}\mathbb E\psi(S_tN_-,i)<\infty$ by  \eqref{bounded mean intensity}, hence this solution is strongly regular, as desired.

\emph{Uniqueness}. We need a counterpart to \cite{Massoulie}, Lemma 4, after which we may simply follow the proof of the uniqueness part of  \cite{Massoulie}, Theorem 2. 
More specifically, we seek a strictly positive function $g$ on $L$ such that $\int_Lg(z)\ \mathcal Q(\mathrm dz)<\infty$ and 
such that the inequality $$\int_{\mathbb R_+\times L}c_{x(z')}h(t)B_{x(z')y(z)}(z)W(x(z'),y(z))g(z)\ \mathrm dt\ \mathcal Q(\mathrm dz)\leq rg(z')$$ holds. We focus on proving this inequality. 
In the previous display, the notations $x(z')$, $y(z)$ and $B(z)$ emphasize the dependency of the location and mark on the random $z'\in L$. 
Let $g_0$ be the identity function on $L$, and for $n\in\mathbb N_0$, set $$g_{n+1}(z')=\int_{\mathbb R_+\times L}c_{x(z')}h(t)B_{x(z')y(z)}(z)W(x(z'),y(z))g_n(z)\ \mathrm dt\ \mathcal Q(\mathrm dz).$$  
By a calculation similar to \eqref{exponential convergence Picard scheme}, combined with Gelfand's formula, given $\tilde r\in(\rho(T_{\mathrm{hom}}),1)$, we can find $N\in\mathbb N$, such that for all $n\geq N$ it holds that $\int_L g_{n+1}(z)\ \mathcal Q(\mathrm dz)\leq \tilde r\int_L g_{n}(z)\ \mathcal Q(\mathrm dz)$. 
Hence, for $n\geq N$, 
$$
\int_L g_{n}(z)\ \mathcal Q(\mathrm dz)\leq \tilde r^{n-N}\|T_{\mathrm{hom}}^{N}\|\int_L g_{0}(z)\ \mathcal Q(\mathrm dz)<\infty.$$ 
Let $r\in(\tilde r,1)$. 
Set $g(z):=\sum_{n\geq N}r^{-n}g_n(z)$, which converges in $L^1(\mathcal Q)$. 
Then it holds that 
\begin{align*}&\int_{\mathbb R_+\times L}c_{x(z')}h(t)B_{x(z')y(z)}(z)W(x(z'),y(z))g(z)\ \mathrm dt\ \mathcal Q(\mathrm dz)\\&=\sum_{n\geq N}r^{-n}g_{n+1}(z')= r\sum_{n\geq N+1}r^{-n}g_n(z')\leq rg(z').\end{align*}
Using this $g$, the reader can readily verify that the uniqueness part of the proof of \cite{Massoulie}, Theorem~2, applies.
$\hfill\Box$
\newline

\begin{proposition}\label{thstationarity mathscr X}
Work in the setting of Theorem~\ref{thm stability general X spectral radius}.
Assume that the measure corresponding to the Poisson point processes $\bar N^i$ is compatible w.r.t.\ the left shifts $\{\theta_t\}$. 
Then there exists a unique stationary solution $N$ to \eqref{N wrt PRM} such that $\sup_{i\in\mathbb N}\mathbb E\psi(S_0N_-,i)<\infty$. 
Also, if $N'$ is the strongly regular non-stationary solution to \eqref{N wrt PRM} corresponding to an initial condition satisfying 
$$\sup_{t>0,i\in\mathbb N}\int_{\mathscr X_i}\eta(t,x)\ \mathrm d\mu(x)<\infty,\quad\text{and}\quad\forall i\in\mathbb N,\  \lim_{t\to\infty}\int_{\mathscr X_i}\eta(t,x)\ \mathrm d\mu(x)\to0,$$ then the law of $S_tN'$ converges weakly to the stationary law.
\end{proposition}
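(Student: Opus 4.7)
The plan is to adapt the Picard scheme developed in the proof of Theorem~\ref{thm stability general X spectral radius}, now exploiting the translation invariance of the driving Poisson measures $\bar N^i$ together with the spectral radius condition via Gelfand's formula. For existence, I would construct iterates $(N^n,\lambda^n)_{n\in\mathbb N_0}$ defined on the entire real line $\mathbb R$ (rather than on $\mathbb R_+$), started from $N^0\equiv 0$ and $\lambda^0\equiv 0$, driven by the translation invariant $\bar N^i$. Each $N^n$ is then itself stationary, being built from stationary ingredients in a translation equivariant way. The same contraction calculation as in \eqref{exponential convergence Picard scheme} gives
\[\sup_{t\in\mathbb R,\,i\in\mathbb N}\mathbb E\|\lambda^{n+1}(t,\cdot)-\lambda^n(t,\cdot)\|_{L^1(\mathscr X_i,\mu)}\leq\|T_{\mathrm{hom}}^n\|\cdot C,\]
with $C$ the constant from \eqref{Cnonlinear}. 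By Gelfand's formula \eqref{gelfands formula} and $\rho(T_{\mathrm{hom}})<1$, there exist $K\in\mathbb N$ and $r\in(\rho(T_{\mathrm{hom}}),1)$ with $\|T_{\mathrm{hom}}^n\|\leq r^n$ for $n\geq K$, making the bounds summable. A Borel--Cantelli argument as in the proof of Theorem~\ref{thm stability general X spectral radius} yields a stationary a.s.\ limit $(N,\lambda)$ solving \eqref{N wrt PRM}, and the bound \eqref{bounded mean intensity} carries over to give $\sup_i\mathbb E\psi(S_0N_-,i)<\infty$.

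For uniqueness of the stationary solution, I would couple two such solutions $N^{(1)},N^{(2)}$ on the same driving $\bar N^i$ and run the same estimate. The key identity
\[\sup_{t\in\mathbb R,\,i\in\mathbb N}\mathbb E\|\lambda^{(1)}(t,\cdot)-\lambda^{(2)}(t,\cdot)\|_{L^1(\mathscr X_i,\mu)}\leq\|T_{\mathrm{hom}}^n\|\cdot 2C\]
holds for every $n\in\mathbb N$ once we know that both suprema are finite (granted by the stationary bounded mean intensity), and letting $n\to\infty$ collapses the left-hand side to zero.

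For the weak convergence of $S_tN'$ to the stationary law, I would couple $N'$ to the stationary solution $N$ by using the same Poisson measures $\bar N^i$ on $(0,\infty)$ while preserving their respective prescribed histories at time $0$. Setting $\Delta(t,x):=|\lambda^{N'}(t,x)-\lambda^{N}(t,x)|$, the Lipschitz assumption on $f_x$ and a decomposition of the memory integral into the pre-$0$ contribution (bounded via $\eta(t,\cdot)$) and the post-$0$ propagated discrepancy yield
\[\mathbb E\|\Delta(t,\cdot)\|_{L^1(\mathscr X_i,\mu)}\leq\int_{\mathscr X_i}\eta_\Delta(t,x)\,\mathrm d\mu(x)+(T_{\mathrm{hom}}\ast\mathbb E\|\Delta(\cdot)\|)(t,i),\]
where $\eta_\Delta$ inherits the uniform bound and the site-wise decay from the hypotheses $\sup_{t,i}\int_{\mathscr X_i}\eta(t,x)\,\mathrm d\mu(x)<\infty$ and $\int_{\mathscr X_i}\eta(t,x)\,\mathrm d\mu(x)\to 0$. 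Iterating this convolution inequality and using, as above, that $\|T_{\mathrm{hom}}^n\|$ is eventually bounded by $r^n$ together with dominated convergence for each fixed $i$ gives $\mathbb E\|\Delta(t,\cdot)\|_{L^1(\mathscr X_i,\mu)}\to 0$ as $t\to\infty$. Weak convergence of $S_tN'$ to the stationary law on any bounded Borel subset of $\mathbb R\times\mathscr X_i$ follows by standard point process arguments (total variation between two Poisson thinnings controlled by the $L^1$ distance of their intensities).

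The main obstacle is Step~3: the spectral radius condition provides contraction only \emph{asymptotically} through Gelfand's formula, so the memory from the initial condition $\eta$ must be propagated through the iteration in a way that is uniform in the site index $i\in\mathbb N$. Combining the per-site decay of $\eta(t,\cdot)$ with the $i$-uniform boundedness, and inserting this into the convolution inequality without losing the $i$-uniform control of the intermediate iterates, is the delicate bookkeeping that distinguishes the proof from the straight Picard argument of Theorem~\ref{thm stability general X spectral radius}.
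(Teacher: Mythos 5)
Your proposal and the paper proceed very differently: the paper gives no written-out proof of Proposition~\ref{thstationarity mathscr X} at all, but instead \emph{cites} Massoulié (1998), Theorem~4 (together with Remarks~3 and~4) as applying directly once the process has been cast into the framework of \eqref{N wrt PRM} with the Lipschitz dominating function \eqref{Lipschitzdominatingfunction}. In contrast, you reconstruct the result from first principles by re-running the Picard argument of Theorem~\ref{thm stability general X spectral radius} on the whole real line, obtaining stationarity of the iterates by translation equivariance, uniqueness by iterating the contraction estimate $n$ times and invoking $\|T_{\mathrm{hom}}^n\|\to0$ (Gelfand), and convergence to stationarity by coupling $N'$ to the stationary $N$ and controlling the $L^1$ discrepancy of conditional intensities. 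This is a genuinely different (and more self-contained) route: what the paper's citation buys is brevity, at the cost of having to check Massoulié's hypotheses (whose Theorem~4 is stated under an operator-norm condition, cf.\ \eqref{rho mathscr X}, rather than under the weaker spectral radius condition of Theorem~\ref{thm stability general X spectral radius}, where only eventual contraction holds); what your argument buys is that it works directly under $\rho(T_{\mathrm{hom}})<1$, matching the hypothesis of the proposition as actually stated, and it makes the mechanism of the convergence transparent. Your argument inherits the same bookkeeping subtlety as the paper's proof of Theorem~\ref{thm stability general X spectral radius}, namely that the contraction estimate mixes the $L^1(\mathscr X)$-operator norm $\|T_{\mathrm{hom}}^n\|$ with the $\sup_i L^1(\mathscr X_i)$-type quantities; this is not a gap introduced by your proof, and you correctly identify the need for uniform-over-sites control in the iteration as the crux.
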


\begin{proposition}\label{existence mathscr X}
Grant Assumption~\ref{ass1}. 
Consider a (possibly nonlinear) graphon Hawkes process $N$  on a $\sigma$-finite measure space $(\mathscr X,\mathscr A,\mu)$, driven by the conditional intensity density specified by (\ref{N wrt PRM}).
Assume that we have a partition of $\mathscr X$ into finite, non-null measure sets $\bigsqcup_{i\in\mathbb N}\mathscr X_i$ that is such that the following conditions hold:
\begin{align}\label{rho mathscr X}
\rho&:=\|h\|_{L^1(\mathbb R_+)}\sup_{i\in\mathbb N}\sum_{j\in\mathbb N}\sup_{y\in\mathscr X_j}\int_{\mathscr X_i}c_x\mathbb E[B_{xy}]W(x,y)\ \mathrm d\mu(x)<1;\\
\label{alpha mathscr X}
\alpha&:=\sup_{i\in\mathbb N}\int_{\mathscr X_i}f_x(\lambda_\infty(x))\ \mathrm d\mu(x)<\infty.
\end{align}
Assume furthermore that $\epsilon(t,i)$ satisfies: $\sup_{t>0,i\in\mathbb N}\epsilon(t,i)<\infty$ and $\lim_{t\to\infty}\epsilon(t,i)\to0$ for all $i\in\mathbb N$, where $\epsilon(t,i)$ describes the effect of the initial condition on site $i$ at time $t$:
\begin{equation}\label{epsilon mathscr X}
\epsilon(t,i):=\sum_{j\in\mathbb N}\mathbb E\left[\int_{(-\infty,0)\times L_1}h(t-s)\sup_{y\in\mathscr X_j}\int_{\mathscr X_i}c_x B_{xy}W(x,y)\ \mathrm d\mu(x)N(\mathrm ds\times\mathrm dB)\right].
\end{equation} 
Then there exists a unique strongly regular solution $N$ to \eqref{N wrt PRM} such that $$\sup_{t\geq0,i\in\mathbb N}\mathbb E\psi(S_tN_-,i)<\infty.$$
\end{proposition}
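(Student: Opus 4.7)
The proof proceeds by verifying the hypotheses of \cite{Massoulie}, Theorem~2, in our setting and invoking it directly. The payoff of adopting the stronger condition \eqref{rho mathscr X} in place of the spectral-radius condition $\rho(T_{\mathrm{hom}}) < 1$ used in Theorem~\ref{thm stability general X spectral radius} is that the weight function entering Massoulié's contraction hypothesis can be taken in a particularly simple (essentially constant across sites) form, rather than built via the Neumann-type iteration used in the proof of Theorem~\ref{thm stability general X spectral radius}.

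I would first recast the dynamics \eqref{N wrt PRM} in the Poisson-driven SDE format already set up in Section~\ref{section existence etc}: events in each site $\mathscr X_i$ are produced by thinning the Poisson random measure $\bar N^i$ on $\mathbb R \times L^i \times \mathbb R_+$ at level $\Lambda^i_t$, and the within-site location is sampled by acceptance--rejection against the uniform marks in $L^i_2$. The $c_x$-Lipschitz property of $f_x$ from Assumption~\ref{ass1} then yields the canonical dominating kernel
$$\bar h(t, z', z) := c_{x(z')}\, h(t)\, B_{x(z')y(z)}(z)\, W(x(z'), y(z)),$$
which controls the dependence of the intensity at mark $z'$ on a past event with mark $z$.

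Next I would verify Massoulié's contraction hypothesis with a site-wise constant weight $g \equiv 1$. Integrating $t$ out produces the factor $\|h\|_{L^1(\mathbb R_+)} c_{x(z')}$; bounding the mark integral over each site $\mathscr X_j$ above by its worst case over $y \in \mathscr X_j$ and then integrating $x(z')$ across the child site $\mathscr X_i$ gives a contraction constant at most
$$\|h\|_{L^1(\mathbb R_+)} \sup_{i \in \mathbb N} \sum_{j \in \mathbb N} \sup_{y \in \mathscr X_j} \int_{\mathscr X_i} c_x\,\mathbb E[B_{xy}]\, W(x, y)\, \mathrm d\mu(x) \;=\; \rho \;<\; 1,$$
which is exactly \eqref{rho mathscr X}. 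Condition \eqref{alpha mathscr X} provides the per-site integrability of the baseline rate, while the two-sided control on $\epsilon(t, i)$ in \eqref{epsilon mathscr X}---uniform finiteness in $t$ and vanishing as $t \to \infty$---furnishes precisely the assumptions on the initial history required by \cite{Massoulie}. With all hypotheses in place, \cite{Massoulie}, Theorem~2, delivers a unique strongly regular solution $N$ with the uniform bound $\sup_{t \geq 0, i \in \mathbb N} \mathbb E\psi(S_tN_-, i) < \infty$.

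The main obstacle is administrative rather than conceptual: carefully reconciling the multi-site Poisson structure used here (indexed by $i \in \mathbb N$, with product mark space $L = L_1 \times L_2$ separating the random excitation process from the uniform location variables) with the abstract framework of \cite{Massoulie}, and verifying that the within-site suprema appearing in \eqref{rho mathscr X} are precisely the per-event worst-case intensity bounds demanded by Massoulié's contraction hypothesis. Once the translation is in place, the contraction is immediate from \eqref{rho mathscr X} and the remaining hypotheses from \eqref{alpha mathscr X}--\eqref{epsilon mathscr X}, so the proof reduces essentially to the bookkeeping just described.
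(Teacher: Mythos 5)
Your proposal is correct and follows essentially the same route as the paper: fit the dynamics \eqref{N wrt PRM} into Massouli\'e's abstract framework, derive the Lipschitz dominating function from the $c_x$-Lipschitz property of $f_x$ (the paper records it in site-integrated form in \eqref{Lipschitzdominatingfunction}, matching what you get after "integrating $x(z')$ across the child site $\mathscr X_i$"), and then recognize \eqref{rho mathscr X}, \eqref{alpha mathscr X} and the bounds on $\epsilon(t,i)$ in \eqref{epsilon mathscr X} as Massouli\'e's conditions (10), (11), (12) respectively. One detail to be cautious about: the claim that the constant weight $g\equiv 1$ is admissible needs a moment's care, since Massouli\'e's framework requires the weight to be $\mathcal Q$-integrable on the full mark space $L$, and with countably many sites and the counting measure on $\mathbb N$ built into $\mathcal Q$, the constant function is not integrable in general --- the paper is equally terse on this point, so it is at most a shared bookkeeping refinement rather than a gap specific to your argument.
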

\begin{proof}
We are working in the general, nonlinear framework described before Theorem~\ref{thm stability general X spectral radius}, with dynamics given by \eqref{N wrt PRM}. 
We fit this into the framework of \cite{Massoulie} as follows. 
Take a process $N$ admitting as its $\mathcal F_t^N$-intensity kernel $$\nu(t,\mathrm dB\times\mathrm di\times\mathrm dU)=\psi(S_tN_-,i)\mathcal Q(\mathrm dB\times\mathrm di\times\mathrm dU),$$ where, with $B_{xy}^i$ and $U^i$ as defined before, $$\mathcal Q(\mathrm dB\times\mathrm di\times\mathrm dU)=\mathcal Q_1(\mathrm dB_{xy}^i)\mathcal Q_2(\mathrm dU^i).$$
Using this notation, we apply \cite{Massoulie}, Theorem~2.

By the Lipschitz condition for $f_x$, and by Fubini's theorem, the Lipschitz condition from \cite{Massoulie}, eqn.\ (6), here reads: for each $i\in\mathbb N$,
\begin{align}\nonumber
&|\psi(S_0N_-,i)-\psi(S_0N'_-,i)|\\ \nonumber&=\Bigg|\int_{\mathscr X_i}\Bigg[
f_x\Bigg(\sum_{j\in\mathbb N}\sum_{\substack{(s,B_{xX_s^j(U_s)},U_s)\in N^j\\s<0}}B_{xX_s^j(U_s)}W(x,X_s^j(U_s))h(-s)\Bigg)\\
&\qquad-f_x\Bigg(\sum_{j\in\mathbb N}\sum_{\substack{(s,B_{xX_s^j(U_s)},U_s)\in (N')^j\\s<0}}B_{xX_s^j(U_s)}W(x,X_s^j(U_s))h(-s)\Bigg)\Bigg]\ \mathrm d\mu(x)\Bigg|\nonumber\\
&\leq \int_{(-\infty,0)\times L}\int_{\mathscr X_i}c_xB_{xX_s^j(U_s)}W(x,X_s^j(U_s))h(-s)|N-N'|(\mathrm ds\times\mathrm dB\times\mathrm dU\times\mathrm dj).\label{Lipschitzconditionpsi}
\end{align}
This equation is valid for all $i\in\mathbb N$, hence the Lipschitz dominating function from \cite{Massoulie}, eqn.\ (6), reads 
\begin{equation}\label{Lipschitzdominatingfunction}
\bar h(t,(B,j,U),i)=h(t)\int_{\mathscr X_i}c_xB_{xX_{-t}^j(U_{-t})}W(x,X_{-t}^j(U_{-t}))\ \mathrm d\mu(x).
\end{equation}
Now condition (10) from \cite{Massoulie}, Theorem~2, is satisfied if $\rho<1$.
Next, condition (11) from \cite{Massoulie}, Theorem~2, requires that $\alpha<\infty$.
Finally, the function involved in \cite{Massoulie}, eqn.\ (12), can be bounded by $\epsilon(t,i)$.
\end{proof}

\section*{Appendix B: Relegated proofs of Section~\ref{section convergence results}}\label{app B}

{\sc Proof of Lemma~\ref{lemma offspring size}.}
We work with $\mathscr X=[\boldsymbol a,\boldsymbol b]$, a graphon Hawkes process $N$ with corresponding integral operator $$T_{\mathrm{hom}}:L^1(\mathscr X)\to L^1(\mathscr X):f(\cdot)\mapsto \|h\|_{L^1(\mathbb R_+)}\int_{\mathscr X}\mathbb E[B_{\cdot y}]W(\cdot, y)f(y)\ \mathrm dy,$$ and prelimit processes $\tilde N^d$ with averaged parameters as described in Section~\ref{section prelimit}, with corresponding integral operators $$\tilde T_{\mathrm{hom}}^{(d)}:L^1(\mathscr X)\to L^1(\mathscr X):f(\cdot)\mapsto \|h\|_{L^1(\mathbb R_+)}\int_{\mathscr X}\mathbb E[\tilde B^d_{\cdot y}]\tilde W^d(\cdot, y)f(y)\ \mathrm dy.$$
We first prove that $\|T_{\mathrm{hom}}-\tilde T_{\mathrm{hom}}^{(d)}\|\to0$ as $d\to\infty$. 
By the total variation bound of Lemma~\ref{TVlemma4}, and by Assumptions~\ref{ass7}, \ref{ass6}, we can select $d$ sufficiently large, i.e., $\mathrm{mesh}(\mathcal P^d)$ sufficiently small, such that both $$\sup_{y\in\mathscr X}\mathbb E\|B_{\cdot y}-\tilde B^d_{\cdot ,y}\|_{L^1(\mathscr X)}\quad\text{and}\quad\sup_{y\in\mathscr X}\|W(\cdot,y)-\tilde W^d(\cdot,y)\|_{L^1(\mathscr X)}$$ are bounded by $$\frac\epsilon{\|h\|_{L^1(\mathbb R_+)}(C_B+C_W)\mathrm{Leb}^m(\mathscr X)}.$$ 
Now let $f\in\ L^1(\mathscr X)$ with $\|f\|_{L^1(\mathscr X)}=1$. 
Then, by Assumption~\ref{ass cw cb}, 
\begin{align}\nonumber&\|(T_{\mathrm{hom}}-\tilde T_{\mathrm{hom}}^{(d)})f\|_{L^1(\mathscr X)}\\\nonumber&=
\left\|\|h\|_{L^1(\mathbb R_+)}\int_{\mathscr X}\left(\mathbb E[B_{\cdot y}]W(\cdot, y)-\mathbb E[\tilde B^d_{\cdot y}]\tilde W^d(\cdot, y)\right)f(y)\ \mathrm dy\right\|_{L^1(\mathscr X)}\\
\nonumber&\leq\|h\|_{L^1(\mathbb R_+)} \int_{\mathscr X}|f(y)|\int_{\mathscr X}\left|\mathbb E[B_{x y}]W(x, y)-\mathbb E[\tilde B^d_{xy}] W(x, y)\right|\mathrm dx\ \mathrm dy\\
\nonumber&+\|h\|_{L^1(\mathbb R_+)} \int_{\mathscr X} |f(y)|\int_{\mathscr X}\left|\mathbb E[\tilde B^d_{xy}] W(x, y)-\mathbb E[\tilde B^d_{xy}] \tilde W^d(x, y)\right|\mathrm dx\ \mathrm dy\\
&< \epsilon,\label{L1 norm operators}
\end{align}
whence $\|T_{\mathrm{hom}}-\tilde T_{\mathrm{hom}}^{(d)}\|\leq\epsilon$.

Now note that for $|\lambda|>\rho(T_{\mathrm{hom}})+\delta$, the resolvent $R(\lambda;T_{\mathrm{hom}})$ exists since $\lambda\notin\sigma(T_{\mathrm{hom}})$.  
Hence, for  $\|T_{\mathrm{hom}}-\tilde T_{\mathrm{hom}}^{(d)}\|<1/\|R(\lambda;T_{\mathrm{hom}})\|$, 
$$
\tilde T_{\mathrm{hom}}^{(d)}-\lambda I=T_{\mathrm{hom}}-\lambda I+\tilde T_{\mathrm{hom}}^{(d)}-T_{\mathrm{hom}}=\left(I-(\tilde T_{\mathrm{hom}}^{(d)}-T_{\mathrm{hom}})R(\lambda;T_{\mathrm{hom}})\right)(T_{\mathrm{hom}}-\lambda I)
$$
is a product of invertible operators, hence is invertible itself. 
We aim to show that for all $d$ sufficiently large such, $\lambda\notin\sigma(\tilde T_{\mathrm{hom}}^{(d)})$ for $|\lambda|>\rho(T_{\mathrm{hom}})+\delta$. 
To this end, we show that 
\begin{equation}\label{infimum norm resolvent}\inf_{|\lambda|>\rho(T_{\mathrm{hom}})+\delta}\left\{\frac1{\|R(\lambda;T_{\mathrm{hom}})\|}\right\}>0.
\end{equation}
By a Neumann series, we can write $$R(\mu;T_{\mathrm{hom}})=\sum_{n\geq0}R(\lambda;T_{\mathrm{hom}})^{n+1}(\lambda-\mu)^n,$$ hence for $|\lambda-\mu|<1/(2\|R(\lambda;T_{\mathrm{hom}})\|)$, it follows that $\|R(\mu;T_{\mathrm{hom}})\|\leq2\|R(\lambda;T_{\mathrm{hom}})\|$. 
Now it follows from the resolvent identity 
$$R(\lambda;T_{\mathrm{hom}})-R(\mu;T_{\mathrm{hom}})=(\lambda-\mu)R(\lambda;T_{\mathrm{hom}})R(\mu;T_{\mathrm{hom}})$$ that the resolvent $R(\lambda;T_{\mathrm{hom}})$ of a bounded linear operator $T_{\mathrm{hom}}$ is a continuous function of $\lambda$, with operator norm tending to $0$, as $\lambda\to\infty$. 
Indeed, again by a Neumann series, for $\lambda$ sufficiently large, $$\|R(\lambda;T_{\mathrm{hom}})\|=\bigg\|\sum_{n\geq0}T_{\mathrm{hom}}^n/\lambda^{n+1}\bigg\|\leq\frac1{\lambda-\|T_{\mathrm{hom}}\|}.$$ 
Now \eqref{infimum norm resolvent} follows. 
Find $D'$ large enough such that 
$$\|T_{\mathrm{hom}}-\tilde T_{\mathrm{hom}}^{(d)}\|<\inf_{|\lambda|>\rho(T_{\mathrm{hom}})+\delta}\left\{\frac1{\|R(\lambda;T_{\mathrm{hom}})\|}\right\}$$ for all $d\geq D'$. 
It follows that $\tilde T_{\mathrm{hom}}^{(d)}-\lambda I$ is invertible for all $|\lambda|>\rho(T_{\mathrm{hom}})+\delta$, hence $\rho(\tilde T_{\mathrm{hom}}^{(d)})\leq \rho(T_{\mathrm{hom}})+\delta<1$, for all $d\geq D'$.


We now prove that we can find some $D\in\mathbb N$ sufficiently large such that the expected cluster sizes $\mathbb E[Z_x^d]$ of $\tilde N^d$ of a cluster generated by a particle in $x\in\mathscr X$ are uniformly bounded over $x\in\mathscr X=[\boldsymbol a,\boldsymbol b]$ and $d\geq D$. 
Note that the maximum expected cluster size over $x\in\mathscr X$ for $\tilde N^d$ can be bounded by 
\begin{equation}\sup_{x\in\mathscr X}\mathbb E[Z_x^d]\leq\sum_{n\geq0}\|(\tilde T_{\mathrm{hom}}^{(d)})^n\|,\label{eqclustersize}\end{equation} hence it suffices to find $D\in\mathbb N$ such that the right-hand side of \eqref{eqclustersize} is uniformly bounded over $d\geq D$.
To this end, it suffices to prove that
\begin{equation}\label{eq1 bd cz}
\text{for all } n,\quad \limsup_{d\to\infty}\|(\tilde T_{\mathrm{hom}}^{(d)})^n\|\leq\|T_{\mathrm{hom}}^n\|.
\end{equation} 
Indeed, fix $\epsilon>0$. 
Since $\rho(T_{\mathrm{hom}})<1$, we can find $N\in\mathbb N$ such that $\|T_{\mathrm{hom}}^N\|<1-\epsilon$. 
Assuming \eqref{eq1 bd cz}, we can find $D$ such that for all $d\geq D$ and $n\leq N$, $\|(\tilde T_{\mathrm{hom}}^{(d)})^n\|\leq \|T_{\mathrm{hom}}^n\|+\epsilon/2$. 
Hence,
\begin{align*}
\sum_{n\geq0}\|(\tilde T_{\mathrm{hom}}^{(d)})^n\|&=\sum_{k\geq0}\sum_{n=0}^{N-1}\|(\tilde T_{\mathrm{hom}}^{(d)})^{kN+n}\|\\
&\leq\sum_{k\geq0}\|(\tilde T_{\mathrm{hom}}^{(d)})^{N}\|^k\sum_{n=0}^{N-1}\|(\tilde T_{\mathrm{hom}}^{(d)})^n\|\\
&\leq\sum_{k\geq0}(1-\epsilon/2)^k\sum_{n=0}^{N-1}(\|T_{\mathrm{hom}}^n\|+\epsilon/2)<\infty,
\end{align*}
providing a uniform bound over $d\geq D$.

To prove \eqref{eq1 bd cz}, it suffices to prove
\begin{align}\label{eq2 bd cz}
\text{for all }d,n\in\mathbb N,\quad&\|(\tilde T_{\mathrm{hom}}^{(d)})^n\|\leq\|(\tilde T_{\mathrm{hom}}^{(d)})^{n-1}T_{\mathrm{hom}}\|;\\
\label{eq3 bd cz}
\text{for all }n\in\mathbb N,\quad&\|(\tilde T_{\mathrm{hom}}^{(d)})^{n-1}T_{\mathrm{hom}}-T_{\mathrm{hom}}^n\|\to0,\quad\text{as }d\to\infty.
\end{align}

For \eqref{eq2 bd cz}, let $P_d:L^1(\mathscr X)\to L^1(\mathscr X)$ be the conditional expectation operator onto the $\sigma$-algebra generated by the partition $\mathcal P^d$ used to define $\tilde N^d$ and $\tilde T_{\mathrm{hom}}^{(d)}$. 
Then $\|P_d\|\leq1$, see e.g., \cite{Measure theory}, Theorem~27.11(ii). 
Note that we can write $\tilde T_{\mathrm{hom}}^{(d)}=P_dT_{\mathrm{hom}}P_d$, hence it follows that 
$$
\|(\tilde T_{\mathrm{hom}}^{(d)})^n\|=\|(\tilde T_{\mathrm{hom}}^{(d)})^{n-1}P_dT_{\mathrm{hom}}P_d\|\leq\|(\tilde T_{\mathrm{hom}}^{(d)})^{n-1}P_dT_{\mathrm{hom}}\|=\|(\tilde T_{\mathrm{hom}}^{(d)})^{n-1}T_{\mathrm{hom}}\|.
$$

To prove \eqref{eq3 bd cz}, write 
$$(\tilde T_{\mathrm{hom}}^{(d)})^{n-1}T_{\mathrm{hom}}-T_{\mathrm{hom}}^n=\sum_{k=1}^{n-1}(\tilde T_{\mathrm{hom}}^{(d)})^{n-k-1}(\tilde T_{\mathrm{hom}}^{(d)}-T_{\mathrm{hom}})T_{\mathrm{hom}}^k.$$ 
By Assumption~\ref{ass cw cb}, $\|\tilde T_{\mathrm{hom}}^{(d)}\|,\|T_{\mathrm{hom}}\|\leq C:=\|h\|_{L^1(\mathbb R_+)}C_BC_W<\infty$. 
Hence, for all $k\in[n-1]$, 
\begin{align*}
&\|(\tilde T_{\mathrm{hom}}^{(d)})^{n-k-1}(\tilde T_{\mathrm{hom}}^{(d)}-T_{\mathrm{hom}})T_{\mathrm{hom}}^k\|\\
&\leq\|\tilde T_{\mathrm{hom}}^{(d)}\|^{n-k-1}\|\tilde T_{\mathrm{hom}}^{(d)}-T_{\mathrm{hom}}\|\|T_{\mathrm{hom}}\|^{k}\\
&\leq C^{n-1}\|\tilde T_{\mathrm{hom}}^{(d)}-T_{\mathrm{hom}}\|\to0,
\end{align*}
as $d\to\infty$, by \eqref{L1 norm operators}.
$\hfill\Box$  \newline

{\sc Proof of Lemma~\ref{TVlemma4}.}
We prove the result for $f\in\mathcal L_{\mathrm{loc}}^1(\mathbb R^m)$.
If we work on equivalence classes of functions, i.e., $f\in L_{\mathrm{loc}}^1(\mathbb R^m)$, take a representant $g$ of $f$, apply the bound \eqref{bound function on A} to $g$, and note that $\|f-f^d\|_{L^1(\mathscr X)}=\|g-g^d\|_{L^1(\mathscr X)}$. 
Taking the infimum over $g\in\mathcal L_{\mathrm{loc}}^1(\mathbb R^m)$ yields the result for equivalence classes of functions as well.

The assumptions made in the lemma imply that $\mathcal P^d$ partitions $\mathscr X$ into $K=K(d)$ hyperrectangles $\mathscr X_1^d,\ldots, \mathscr X_K^d$. 
We first prove the bound for each such hyperrectangle $\mathscr X_n^d$, $n\in[K]$. 
To this end, fix $\Omega:=\mathscr X_n^d$, and set $u:\Omega\to\mathbb R:x\mapsto f(x)-f^d(x)$; notice that $\int_{\Omega}u=0$. 
We use \cite{Leoni}, Theorem~13.9, to approximate $u$ by a sequence of smooth functions 
$$(\hat u_k)_{k\in\mathbb N}\subset W^{1,1}(\Omega)\cap C^\infty(\Omega)\quad\text{such that}\quad\hat u_k\stackrel{L^1(\Omega)}\longrightarrow u,\quad \mathrm{Var}(\hat u_k,\Omega)\to\mathrm{Var}(u,\Omega),$$ as $k\to\infty$. 
Here, $W^{1,1}(\Omega)\subset L^1(\Omega)$ denotes the Sobolev space of functions having weak derivatives in $L^1(\Omega)$.  
Set $$u_k=\hat u_k-\int_\Omega \hat u_k\in W^{1,1}(\Omega)\cap C^\infty(\Omega).$$ 
Since $\hat u_k\stackrel{L^1(\Omega)}\longrightarrow u$ and $\int_\Omega u=0$, it follows by the reverse triangle inequality on $L^1(\Omega)$ that $\|\hat u_k\|_{L^1(\Omega)}\to0$ as $k\to\infty$. 
Hence, $$\|u-u_k\|_{L^1(\Omega)}\leq\|u-\hat u_k\|_{L^1(\Omega)}+\|\hat u_k\|_{L^1(\Omega)}\to0\quad\text{as }k\to\infty,$$ i.e., $u_k\stackrel{L^1(\Omega)}\longrightarrow u$. 
After replacing $(u_k)_{k\in\mathbb N}$ by a subsequence, if necessary, we may assume that $u_k\to u$ a.s.
Furthermore, since the total variation does not change by adding a constant to a function, $\mathrm{Var}(u_k,\Omega)=\mathrm{Var}(\hat u_k,\Omega)\to\mathrm{Var}(u,\Omega)$, as $k\to\infty$. 
Therefore, we may assume without loss of generality that our approximating sequence satisfies $\int_\Omega u_k=0$ for all $k\in\mathbb N$.

We prove a bound for the approximating smooth functions $(u_k)_{k\in\mathbb N}$, which we then extend to $u$ itself. 
To this end, note that $u_k$ belongs to the Sobolev space $W^{1,1}$, being smooth. Since we are working on a convex domain $\Omega$ and since $\int_\Omega u_k=0$, \cite{Acosta}, Theorem~3.2, gives that 
\begin{equation}\label{bound W11}
\|u_k\|_{L^1(\Omega)}\leq\frac12\mathrm{diam}(\Omega)\|\nabla u_k\|_{L^1(\Omega)},
\end{equation}
where $\nabla$ denotes the gradient operator. 
It holds that $W^{1,1}\subset\mathrm{BV}(\Omega)$, and for $u_k\in W^{1,1}$ we have $\|\nabla u_k\|_{L^1(\Omega)}=\mathrm {Var}(u_k,\Omega)$, see \cite{Leoni}, \S13.2. 
Now Fatou's lemma and (\ref{bound W11}) imply
\begin{align}\label{bound BV}\nonumber
\|u\|_{L^1(\Omega)}&\leq\liminf_{k\to\infty}\|u_k\|_{L^1(\Omega)}\leq\lim_{k\to\infty}\frac12\mathrm{diam}(\Omega)\|\nabla u_k\|_{L^1(\Omega)}\\&=\lim_{k\to\infty}\frac12\mathrm{diam}(\Omega)\mathrm {Var}(u_k,\Omega)=\frac12\mathrm{diam}(\Omega)\mathrm {Var}(u,\Omega).
\end{align}
Again, since the total variation is insensitive to adding a constant to a function, this bound implies that 
\begin{equation}\label{bound BV f}
\|f-f^d\|_{L^1(\Omega)}\leq\frac12\mathrm{diam}(\Omega)\mathrm {Var}(f-f^d,\Omega)=\frac12\mathrm{diam}(\Omega)\mathrm {Var}(f,\Omega).
\end{equation}
Next, because the $L^1$-norm is additive and the total variation is superadditive over disjoint sets,
\begin{align}\nonumber
\|f-f^d\|_{L^1(\mathscr X)}&=\sum_{i=1}^K\|f-f^d\|_{L^1(\mathscr X_i^d)}\leq\sum_{i=1}^K\frac12\mathrm{diam}(\mathscr X_i^d)\mathrm {Var}(f,\mathscr X_i^d)\\
&\leq\frac12\mathrm{mesh}(\mathcal P^d)\sum_{i=1}^K\mathrm {Var}(f,\mathscr X_i^d)\leq\frac12\mathrm{mesh}(\mathcal P^d)\mathrm{Var}(f,\mathscr X),\label{bound function on A}
\end{align}
since we defined the mesh of a partition as the largest diameter among its elements.
$\hfill\Box$  \newline

\begin{lemma}\label{TVlemma3}
Let $[a,b]\subset\mathbb R$ be some closed interval. 
Assume that $a,b$ are endpoints of intervals $\mathscr X_n^d,\mathscr X_m^d$ from the partition $\mathcal P^d$ of $\mathbb R$ into intervals; if this is not the case, replace $\mathcal P^d$ by its coarsest refinement containing $a,b$ as endpoints. 
Assume that $\mathbb R$ is equipped with the Borel $\sigma$-algebra and a measure $\mu$ dominated by the Lebesgue measure. 
Consider a piecewise constant approximation $f^d$ of the $\mu$-locally integrable function $f:\mathbb R\to\mathbb R$, where $f^d$ is constant on each $\mathscr X_n^d$, with $f^d$ calculated as a measurable functional of $f|_{\mathscr X_n^d}$, and $$f^d(\mathscr X_n^d)\in\left[\essinf_{x\in\mathscr X_n^d}f(x),\esssup_{x\in\mathscr X_n^d}f(x)\right].$$ 
Let $\mathrm{mesh}(\mathcal P^d)$ be the maximum $\mu$-measure of all elements of $\mathcal P^d$. 
Then it holds that $f^d\in L^1([a,b],\mu)$, $f^d\to f$ a.e.\  as $\mathrm{mesh}(\mathcal P^d)\to0$, and finally, with $\|\cdot\|_{\mathrm{TV}[a,b]}$ the usual total variational norm over $[a,b]$, 
\begin{equation}\label{L1 TV ineq ab}
\|f^d-f\|_{L^1([a,b],\mu)}\leq \|f\|_{\mathrm{TV}[a,b]}\mathrm{mesh}(\mathcal P^d).
\end{equation}
\end{lemma}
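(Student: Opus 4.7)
The proof is elementary and serves as a one-dimensional counterpart of Lemma~\ref{TVlemma4}, with the Sobolev-Poincar\'e step from \cite{Acosta} replaced by a direct sandwich argument. My plan is to work interval by interval and then sum using the superadditivity of the total variation.

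First, I would establish the local estimate on each partition element $\mathscr X_n^d$. Since $f^d(\mathscr X_n^d)\in[\essinf_{\mathscr X_n^d}f,\esssup_{\mathscr X_n^d}f]$, we have, $\mu$-a.e.\ on $\mathscr X_n^d$,
\begin{equation*}
|f^d-f|\leq \esssup_{\mathscr X_n^d}f-\essinf_{\mathscr X_n^d}f.
\end{equation*}
Invoking the classical one-dimensional characterization \eqref{def TV univariate} applied to a cadlag representative of $f$, this essential oscillation is bounded by $\mathrm{Var}(f,\mathscr X_n^d)$. Since $\mu(\mathscr X_n^d)\leq \mathrm{mesh}(\mathcal P^d)$ by definition of mesh, integrating gives
\begin{equation*}
\int_{\mathscr X_n^d}|f-f^d|\,\mathrm d\mu \leq \mathrm{Var}(f,\mathscr X_n^d)\cdot\mathrm{mesh}(\mathcal P^d).
\end{equation*}

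Second, because $a$ and $b$ are endpoints of elements of $\mathcal P^d$, the compact interval $[a,b]$ is a disjoint union of finitely many $\mathscr X_n^d$. Summing the local estimate and using superadditivity of total variation on disjoint intervals,
\begin{equation*}
\|f-f^d\|_{L^1([a,b],\mu)}=\sum_{n:\mathscr X_n^d\subset[a,b]}\int_{\mathscr X_n^d}|f-f^d|\,\mathrm d\mu\leq\mathrm{mesh}(\mathcal P^d)\sum_n\mathrm{Var}(f,\mathscr X_n^d)\leq\mathrm{mesh}(\mathcal P^d)\|f\|_{\mathrm{TV}[a,b]},
\end{equation*}
which is \eqref{L1 TV ineq ab}. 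Membership $f^d\in L^1([a,b],\mu)$ then follows from $\mu$-local integrability of $f$ and the triangle inequality.

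Finally, for the a.e.\ statement: at any Lebesgue point $x$ of $f$ whose containing partition interval has Lebesgue diameter tending to $0$, both $\essinf$ and $\esssup$ of $f$ over that interval converge to $f(x)$, so by the sandwich $f^d(x)\to f(x)$. Since $\mu$ is dominated by Lebesgue, a vanishing $\mu$-mesh forces the corresponding Lebesgue diameters to shrink outside a $\mu$-null set, giving $f^d\to f$ $\mu$-a.e. I do not anticipate any real obstacle; the main contrast with Lemma~\ref{TVlemma4} is precisely that the one-dimensional sandwich $\essinf f \leq f^d \leq \esssup f$ bypasses the need for a Sobolev-Poincar\'e inequality.
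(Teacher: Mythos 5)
Your proposal is correct and matches the paper's argument in all essentials: both exploit the sandwich $f^d(\mathscr X_n^d)\in[\essinf f,\esssup f]$ to bound the pointwise discrepancy by the local oscillation, then bound the sum of oscillations by the total variation over $[a,b]$. The only cosmetic difference is that the paper works with an arbitrary representative $g\in[f]$ and takes the infimum over $g$ at the very end, whereas you bound the essential oscillation directly by $\mathrm{Var}(f,\mathscr X_n^d)$ (via a cadlag representative) and invoke superadditivity; these are interchangeable routes to the same estimate.
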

\begin{proof}
The first two claims are easy. 
For the final claim, the total variation norm of $f\in L^1([a,b],\mu)$ may depend on the representant $g\in[f]:=\{\bar f\in\mathcal L([a,b],\mu):\bar f=f \ \mu\text{-a.e.}\}$. 
Therefore, we set $\|f\|_{\mathrm{TV[a,b]}}:=\inf_{g\in[f]}\mathrm{var}_{[a,b]}(g)$, where $$\mathrm{var}_{[a,b]}(g):=\sup\left\{\sum_{i=1}^n|g(x_{i+1})-g(x_i)|:n\in\mathbb N,a=x_0<x_1<\cdots<x_{n-1}<x_n=b\right\}.$$
Select a $g\in[f]$. 
Then, $g^d\equiv f^d$ on $[a,b]$. 
Note that for $x\in \mathscr X_n^d$, it holds that $$|g^d(x)-g(x)|\leq\sup_{u,v\in \mathscr X_n^d}|g(u)-g(v)|=:\Delta_n^d.$$ 
It follows that 
\begin{align*}
\|f^d-f\|_{L^1([a,b],\mu)}&=\int_a^b|f^d(x)-f(x)|\ \mathrm d\mu(x)=\int_a^b|g^d(x)-g(x)| \ \mathrm d\mu(x)\\&=\sum_{n\in\mathbb N}\int_{\mathscr X_n^d\cap[a,b]}|g^d(x)-g(x)|\ \mathrm d\mu(x)\leq\sum_{n\in\mathbb N}\int_{\mathscr X_n^d\cap[a,b]}\Delta_n^d\ \mathrm d\mu(x)\\
&=\sum_{\substack{n\in\mathbb N\\ \mu(\mathscr X_n^d\cap[a,b])\neq0}}\mu(\mathscr X_n^d)\Delta_n^d
\leq\mathrm{mesh}(\mathcal P^d)\sum_{\substack{n\in\mathbb N\\ \mu(\mathscr X_n^d\cap[a,b])\neq0}}\Delta_n^d\\&\leq\mathrm{mesh}(\mathcal P^d)\mathrm{var}_{[a,b]}(g).
\end{align*}
Taking the infimum over $g\in[f]$ over both sides of this inequality, \eqref{L1 TV ineq ab} follows.
\end{proof}

{\sc Proof of Proposition~\ref{graphon + process convergence simultaneously}.}
Suppose that the partitions $(\mathcal P^d)_{d\in\mathbb N}$ are induced by  the first $d-1$ hyperplanes of a random sample $(\mathcal V_i)_{i\in\mathbb N}$ of $(m-1)$-dimensional hyperplanes orthogonal to one of the coordinates. 
More specifically, the $\mathcal V_i$s are sampled as follows. 
First, the coordinate $n\in[m]$ to which $\mathcal V_i$ is orthogonal is picked uniformly at random, i.e., $n\sim\mathrm{Uni}(\{1,\ldots,m\})$. 
Next, we draw $z_n\sim\mathrm{Uni}(a_n,b_n)$, and set $\mathcal V_i:=\{\mathbf x\in\mathbb R^m:x_n=z_n\}$. 
For $d=1$, we simply split $\mathscr X$ into two parts, obtaining $\mathcal P^2$. 
Next, for $d\geq2$, $\mathcal V_{d-1}$ intersects $\mathscr X$, splitting $S\in[d-1]$ sets $\mathscr X_{n(1)}^{d-1},\ldots,\mathscr X_{n(S)}^{d-1}$ of $\mathcal P^{d-1}$ into two parts ($S\geq1$ a.s.). 
Now obtain $\mathcal P^d$ as a refinement of $\mathcal P^{d-1}$ by splitting only the set $\mathscr X_{n(i)}^{d-1}$ ($i\in[S]$) having the largest diameter into two parts; in case of a tie, choose randomly between the ties.

Next, take $d\in\mathbb N$, and for each $i\in[d]$, select a point $x_i\in \mathscr X_i^d$. 
Let $M_d=\{x_i:i\in[d]\}\subset\mathscr X$ be the set of those points. 
Then $(M_d)_{d\in\mathbb N}$ constitutes a sequence of well-distributed sets in the sense of \cite{Lovasz}, p.\ 185; i.e., $M_d$ converges weakly to a uniform measure on $\mathscr X$. 
By a straightforward modification of the proof of \cite{Lovasz}, Lemma 11.33 --- for which we need Assumption \ref{ass2}, i.e., a.s.\ continuity of $W$ ---, it follows that the random connectivity graph $\mathbb G^d$ on $[d]$ with edge set $\{Z_{ij}^d:i,j\in[d]\}$ converges to $W$ in the cut norm, a.s. 
Since we use possibly non-symmetric graphons and in any case sample looped directed graphs, we use the framework of \cite{decoratedgraphons, decoratedgraphonscompact}, which allows to go beyond simple, symmetric graphs and graphons.

For the ucp convergence, to invoke the proof of Theorem~\ref{quenched theorem}, we only need $\mathrm{mesh}(\mathcal P^d)\to0$ 
 as $d\to\infty$, which is evident. 
\hfill$\Box$

\section*{Appendix C: Supplement to Section~\ref{LT stable case}}\label{app C}
Before Assumption~\ref{ass9}, we claimed the following.
\begin{proposition}\label{app3prop} Let $T_{\mathrm{hom}}\in B(L^1[0,1])$ be a bounded operator with $\rho(T_{\mathrm{hom}})>1$. 
Then there exists some $g\in L^1_+[0,1]$ such that $\|T_{\mathrm{hom}}^ng\|_{L^1[0,1]}\to\infty$, as $n\to\infty$.
\end{proposition}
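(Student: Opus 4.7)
The plan is to apply the Krein--Rutman theorem to $T_{\mathrm{hom}}$, exploiting its positivity and compactness. Although the statement is formulated for an abstract $T_{\mathrm{hom}}\in B(L^1[0,1])$, in context this operator is the integral operator given by \eqref{operator Thom no marks} with non-negative kernel $W$, and the argument relies crucially on this structure.

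First I would verify that $T_{\mathrm{hom}}$ is a positive compact operator on $L^1[0,1]$. Positivity is clear: since $W,h\geq 0$, $T_{\mathrm{hom}}$ maps $L^1_+[0,1]$ into itself. Compactness follows from the argument already invoked in the proof of Theorem~\ref{FLLN theorem 2}: by Assumption~\ref{ass cw cb} the kernel $W$ is bounded, and Luzin's theorem permits approximation by continuous kernels, each of which induces a compact operator on $L^1[0,1]$; compactness of $T_{\mathrm{hom}}$ then follows from operator-norm convergence.

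Second, I would invoke the Krein--Rutman theorem (\cite{Kreinrutman}, Theorem~V.6.6), which states that for a compact positive linear operator on a Banach lattice whose positive cone is reproducing (as is the case for $L^1[0,1]$ with cone $L^1_+[0,1]$), if the spectral radius is strictly positive then $\rho(T_{\mathrm{hom}})$ is itself an eigenvalue, admitting a non-negative, non-zero eigenvector $\phi\in L^1_+[0,1]$. In contrast to the use of Krein--Rutman in the proof of Theorem~\ref{FLLN theorem 2}, here we do not need the stronger conclusion $\phi>0$ a.e., so no symmetry or irreducibility hypothesis on $W$ is necessary.

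Setting $g:=\phi$, the proposition then follows immediately: for every $n\in\mathbb{N}$, $T_{\mathrm{hom}}^n g=\rho(T_{\mathrm{hom}})^n g$, hence
\[
\|T_{\mathrm{hom}}^n g\|_{L^1[0,1]}=\rho(T_{\mathrm{hom}})^n\|g\|_{L^1[0,1]}\longrightarrow\infty,\qquad n\to\infty,
\]
since $\rho(T_{\mathrm{hom}})>1$. The main technical subtlety in this plan lies in establishing compactness of $T_{\mathrm{hom}}$ on $L^1[0,1]$: unlike compactness on $L^2[0,1]$, this requires controlling $\sup_{y\in[0,1]}\|W(\cdot,y)-\tilde W(\cdot,y)\|_{L^1[0,1]}$ for the continuous approximants $\tilde W$ produced by Luzin, which is somewhat stronger than mere $L^1([0,1]^2)$ closeness; this is however the ``standard argument'' alluded to in the proof of Theorem~\ref{FLLN theorem 2}, and carries through since $W$ is uniformly bounded.
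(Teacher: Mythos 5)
Your argument via Krein--Rutman is a genuinely different route from the one taken in the paper, and it is worth comparing them carefully. The paper's proof is self-contained and requires only two things: positivity of $T_{\mathrm{hom}}$ and Gelfand's formula. Concretely, from $\|T_{\mathrm{hom}}^n\|\geq\rho(T_{\mathrm{hom}})^n$ one extracts, for each $n$, a normalized indicator function $\chi_n$ with $\|T_{\mathrm{hom}}^n\chi_n\|_{L^1}\geq(\rho'')^n$ for a suitable $\rho''\in(1,\rho(T_{\mathrm{hom}}))$, and then sets $g:=(1-\gamma)\sum_{n\geq1}\gamma^{n-1}\chi_n$ with $\gamma$ chosen so that $\gamma\rho''>1$; positivity of $T_{\mathrm{hom}}$ then gives $\|T_{\mathrm{hom}}^ng\|_{L^1}\geq\frac{1-\gamma}{\gamma}(\gamma\rho'')^n\to\infty$. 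No compactness enters. Your proof buys a cleaner, more conceptual statement (a genuine Perron eigenvector so that the growth is exactly geometric), at the price of needing $T_{\mathrm{hom}}$ compact on $L^1[0,1]$.

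That price is steeper than you acknowledge. You correctly observe that compactness on $L^1$ requires controlling $\sup_{y}\|W(\cdot,y)-\tilde W(\cdot,y)\|_{L^1[0,1]}$ for the Luzin approximants, not merely $L^1([0,1]^2)$ closeness, but you then assert that this "carries through since $W$ is uniformly bounded." It does not: a bounded kernel does \emph{not} in general induce a compact operator on $L^1[0,1]$. (For instance, with $W(x,y)=r_n(x)$ on $y\in(1/(n+1),1/n]$, where $r_n$ are Rademacher-type functions taking values in $\{0,1\}$, the normalized inputs $f_n=n(n+1)\mathbf1_{(1/(n+1),1/n]}$ produce $T_Wf_n=r_n$, which has no $L^1$-convergent subsequence; yet $W$ is bounded by $1$.) By the Kolmogorov--Riesz--Fr\'echet criterion, what is actually needed is translation equicontinuity $\sup_y\|W(\cdot+h,y)-W(\cdot,y)\|_{L^1}\to0$, which in this paper is supplied not by Assumption~\ref{ass cw cb} but by the bounded-variation hypothesis $\sup_y\|W(\cdot,y)\|_{\mathrm{TV}(\mathscr X)}<\infty$ of Assumption~\ref{ass6}, which is not among the hypotheses under which Proposition~\ref{app3prop} is invoked. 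So your route is valid only after importing an additional regularity assumption on $W$; the paper's explicit construction via normalized indicators sidesteps the question of $L^1$-compactness entirely, which is precisely why it is preferable here. Your observation that only $\phi\geq0$ (not $\phi>0$ a.e.) is needed, so no symmetry or irreducibility is required, is correct and is a nice point.
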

\begin{proof}
Observe that $T^n_{\mathrm{hom}}f$ records the spatial density of expected $n$th-generation offspring in a cluster with an immigrant distributed according to density $f\in L^1[0,1]$. 
Informally, letting $\delta_x$ be the Dirac delta function centered at $x$, $T^n_{\mathrm{hom}}\delta_x$ records the spatial density of eventual expected $n$th-generation offspring within a cluster generated by an immigrant in $x$. 
If $\rho(T_{\mathrm{hom}})>1$, then $\|T_{\mathrm{hom}}^n\|\to\infty$ as $n\to\infty$, hence there exists $(f_n)_{n\in\mathbb N}\subset L^1[0,1]$ with $\|f_n\|_{L^1[0,1]}=1$ for all $n\in\mathbb N$, such that $\|T_{\mathrm{hom}}^nf_n\|_{L^1[0,1]}\to\infty$, as $n\to\infty$. 
Set $\rho':=1+\frac12(\rho(T_{\mathrm{hom}})-1)$ and $\rho'':=1+\frac13(\rho(T_{\mathrm{hom}})-1)$. 

We present a construction for a function $g$, independent of $n$, such that $\|T_{\mathrm{hom}}^ng\|_{L^1[0,1]}\to\infty$, as $n\to\infty$.
For each $n\in\mathbb N$, we can find $f_n\in L^1[0,1]$ with $\|f_n\|_{L^1[0,1]}=1$ such that $\|T_{\mathrm{hom}}^nf_n\|_{L^1[0,1]}\geq(\rho')^n$. 
Approximate $f_n^\pm\in L^1_+[0,1]$ from below by simple functions.
By continuity of norms, we can find a simple function $s_n\leq f_n$ pointwise, satisfying $\|T_{\mathrm{hom}}^ns_n\|_{L^1[0,1]}\geq(\rho'')^n$. 
Consequently, for each $n$, we can find a set $A_n\in\mathcal B[0,1]$ of positive measure, corresponding to a normalized indicator function $\chi_n=(\mathrm{Leb}(A_n))^{-1}\mathbf1\{A_n\}$ satisfying $\|T_{\mathrm{hom}}^n\chi_n\|_{L^1[0,1]}\geq(\rho'')^n$. 
Set $\gamma=(1+\rho'')/(2\rho'')\in(\frac12,1)$; then $\gamma\rho''>1$. 
We define $g\in L^1_+[0,1]$ by $g=(1-\gamma)\sum_{n\geq1}\gamma^{n-1}\chi_n$; it is easy to see that $\|g\|_{L^1[0,1]}\leq1$, while for each $n\in\mathbb N$,
\[
\|T_{\mathrm{hom}}^ng\|_{L^1[0,1]}\geq\frac{1-\gamma}{\gamma}\gamma^n\|T_{\mathrm{hom}}^n\chi_n\|_{L^1[0,1]}\geq\frac{1-\gamma}{\gamma}(\gamma\rho'')^n\to\infty,\quad\text{as }n\to\infty.\qedhere
\]
\end{proof}

The existence of $g$ indicates instability of our system. 
To prove $N_T(A)/T\to\infty$ a.s.\ using this function $g$, we need to argue that there is a positive stream of immigrants leading to a point with infinite expected offspring. 
However, it may be possible that $g$ has an infinite spike (of finite $L^1$ size) creating the unstable behavior, and there is no way of bounding below the size of a set of $x\in[0,1]$ having infinite offspring given only the existence of such a function $g$. 
The next proposition indicates that this approach is not fruitful without using further properties of $(T_{\mathrm{hom}}^n)_{n\geq0}$.

\begin{proposition}\label{heuristic does not work}
There exist a sequence $(L_{n})_{n\in\mathbb N}\subset B(L^1[0,1])$ and a $g\in L^1[0,1]$ satisfying $\|L_n g\|_{L^1[0,1]}\to\infty$, but such that there does not exist a simple function $\tilde g$ with $\|L_n\tilde g\|_{L^1[0,1]}\to\infty$.
\end{proposition}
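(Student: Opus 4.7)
The plan is to give an elementary counterexample. The key asymmetry to exploit is that every simple function lies in $L^\infty[0,1]$, whereas a general $g\in L^1[0,1]$ can have an unbounded (but integrable) singularity. So I would construct bounded operators $L_n$ that are highly sensitive to such a singularity near a single point but act with vanishing $L^1$-norm on any essentially bounded input.

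Concretely, I would define, for $n\in\mathbb N$,
\begin{equation*}
L_n f := n^{3/2}\left(\int_0^{1/n^2} f(y)\,\mathrm dy\right)\mathbf 1_{[0,1]},\qquad g(x):=x^{-1/2},
\end{equation*}
and then verify three points. First, each $L_n\in B(L^1[0,1])$: the estimate $\|L_n f\|_{L^1[0,1]}\leq n^{3/2}\|f\|_{L^1[0,1]}$ is immediate. Second, $g\in L^1[0,1]$ since $\int_0^1 x^{-1/2}\,\mathrm dx=2$, and a direct computation gives $\|L_n g\|_{L^1[0,1]}=n^{3/2}\cdot\int_0^{1/n^2}y^{-1/2}\,\mathrm dy=n^{3/2}\cdot\frac{2}{n}=2\sqrt n\to\infty$.

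Third, for any simple function $\tilde g=\sum_{i=1}^k a_i\mathbf 1_{B_i}$ one has $\|\tilde g\|_{L^\infty[0,1]}\leq\max_i|a_i|<\infty$, hence
\begin{equation*}
\|L_n\tilde g\|_{L^1[0,1]}\leq n^{3/2}\cdot\|\tilde g\|_{L^\infty[0,1]}\cdot n^{-2}=\frac{\|\tilde g\|_{L^\infty[0,1]}}{\sqrt n}\longrightarrow 0,
\end{equation*}
so in particular no simple $\tilde g$ can satisfy $\|L_n\tilde g\|_{L^1[0,1]}\to\infty$. This delivers the proposition.

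There is essentially no obstacle: the construction is a one-line calculation once one has the right idea. The conceptual content worth emphasising is that this example shows the heuristic behind Proposition~\ref{app3prop} is genuinely insufficient to produce a set of coordinates $x\in[0,1]$ of positive measure from which ``explosive'' offspring originates, since the blow-up of $\|L_n g\|_{L^1[0,1]}$ can be driven entirely by an integrable singularity of $g$ at a single point.
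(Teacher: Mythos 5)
Your proof is correct and uses essentially the same construction as the paper: both pair a rank-one functional concentrated on a shrinking interval near $0$ with the integrable singularity $x^{-1/2}$, and then note that any simple function, being essentially bounded, cannot see that singularity. The only differences are cosmetic (the paper uses $\phi_n=n^{3/4}\mathbf 1_{[0,1/n]}$ and phrases the argument via the duality $L^1[0,1]^*\cong L^\infty[0,1]$ together with a preliminary reduction to indicators $\mathbf 1_A$, while you take $n^{3/2}\mathbf 1_{[0,1/n^2]}$ and bound directly by $\|\tilde g\|_{L^\infty}$); the underlying idea and conclusion are identical.
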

\begin{proof}
The existence of such a simple function $\tilde g$ would imply the existence of some $A\in\mathcal B[0,1]$ for which $\|L_n\mathbf1_A\|_{L^1[0,1]}\to\infty$. 
Note that $\|L_n(\cdot)\|\in L^1[0,1]^*$, the dual of $L^1[0,1]$, which is isometrically isomorphic to $L^\infty[0,1]$. 
In this dual space, to prove our result it suffices to find a sequence $(\phi_n)_{n\in \mathbb N}\subset L^\infty[0,1]$ and a $g\in L^1[0,1]$ such that $\int_0^1\phi_n(x)g(x)\ \mathrm dx\to\infty$ as $n\to\infty$, and such that for any indicator function $\mathbf1_A$ it holds that $\int_0^1\phi_n(x)\mathbf1_A(x)\ \mathrm dx=\int_A\phi_n(x)\ \mathrm dx\to0$. 
To this end, let $g(x)=1/(2\sqrt x)$ and $\phi_n(x)=n^{3/4}\mathbf1_{[0,n^{-1}]}$. 
For any $A\in \mathcal B[0,1]$, \[\int_A\phi_n(x)\ \mathrm dx=n^{3/4}\ \mathrm{Leb}([0,n^{-1}]\cap A)\leq n^{-1/4}\to0,\quad\text{as }n\to\infty.\qedhere\] 
\end{proof}

\section*{Appendix D: Relegated proofs of Section~\ref{section6 fixed point theorems}}\label{app D}

{\sc Proof of Theorem~\ref{transform Q S_x}}. 
Conditionally upon having $n$ immigrants in $[0,t]$, those immigrants arrive uniformly over time, since \emph{ignoring the spatial coordinate}, they arrive according to a homogeneous Poisson process of rate $\alpha:=\|\lambda_\infty(\cdot)\|_{L^1[0,1]}$. 
Furthermore, the spatial locations are i.i.d.\ with Lebesgue densities proportional to $\lambda_\infty(\cdot)$. 
Let $p^n$ denote the Lebesgue density of the spatial coordinates in $[0,1]^n$ for a sample of $n$ immigrants. 
We calculate the Laplace functional of $Q$ by conditioning on the number of immigrants and on their spatial coordinates. 
Let $I_t$ be the number of immigrants that have arrived by time $t$. 
Then, with the understanding that $\int_{[0,1]^0}f\equiv1$,
\begin{align*}\pushQED{\qed} 
\mathcal L_Q(f,t)&=\mathbb E\left[\exp\left(-\sum_{z_i\in Q(t)}f(z_i)\right)\right]\\
&=\sum_{n\geq0}\mathbb E\left[\mathbb E\left[\exp\left(-\sum_{z_i\in Q(t)}f(z_i)\right)\right]\Bigg|I_t=n\right]\mathbb P(I_t=n)\\
&=\sum_{n\geq0}\int_{[0,1]^n}\mathbb E\left[\mathbb E\left[\exp\left(-\sum_{z_i\in Q(t)}f(z_i)\right)\right]\Bigg|I_t=n,\text{ locations }x_1,\ldots,x_n\right]\\
&\phantom=\qquad \times p^n(x_1,\ldots,x_n)\ \mathrm dx_1\cdots\mathrm d x_n \ \mathbb P(I_t=n)\\
&=\sum_{n\geq0}\int_{[0,1]^n}\prod_{j=1}^n\left(\frac1t\int_0^t\eta_{x_j}(f,u)\ \mathrm du\right)\frac{\lambda_\infty(x_j)}{\|\lambda_\infty(\cdot)\|_{L^1[0,1]}}\ \mathrm dx_1\cdots\mathrm d x_n\ \frac{e^{-\alpha t}(\alpha t)^n}{n!}\\
&=\sum_{n\geq0}\frac{e^{-\alpha t}}{n!}\prod_{j=1}^n\left\{\int_0^1\int_0^t\eta_{x_j}(f,u)\ \mathrm du\ \lambda_\infty(x_j)\ \mathrm dx_j\right\}\\
&=\sum_{n\geq0}\frac{e^{-\alpha t}}{n!}\left(\int_0^1\int_0^t\eta_x(f,u)\lambda_\infty(x)\ \mathrm du\mathrm dx\right)^n\\
&=\exp\left(\int_0^1\int_0^t\left(\eta_x(f,u)-1\right)\lambda_\infty(x)\ \mathrm du\ \mathrm dx\right).
\end{align*}
This proves the stated result.
$\hfill\Box$  
\newline

{\sc Proof of Lemma~\ref{lemmawelldefined}}.
Take $\xi\in\mathbb L^{[0,1]}$, which is the transform of some $[0,1]$-dimensional spatiotemporal point process $Z\in\mathscr Z$. 
We aim to define the probabilistic analog $\hat\Phi$ in the process domain of the operator $\Phi$ in the transform domain, meaning that the diagram depicted in Figure \ref{comdiagram} commutes. 
\begin{figure}[!htbp]
\centering
\begin{tikzcd}
Z \arrow{r}{\hat\Phi} \arrow[swap]{d}{\mathcal L} & \hat\Phi(Z) \arrow{d}{\mathcal L} \\
\xi \arrow{r}{\Phi} & \Phi(\xi)
\end{tikzcd}
\caption{Diagram showing the relation between $\Phi$ and $\hat\Phi$.}\label{comdiagram}
\end{figure}
To this end, for $x\in[0,1]$, let $K_x$ be an inhomogeneous Poisson process as defined after the proof of Theorem~\ref{transform Q S_x}. 
For $Z\in\mathscr Z$, $x\in[0,1]$ and $t\in\mathbb R_+$, we set
\begin{equation}
(\hat\Phi(Z))_x(t):=\delta_x\cdot\mathbf1\{J_x>t\}+\sum_{i=1}^{K_x(t)}Z_{x_i}(t-t_i).
\end{equation}
Then $\hat\Phi(Z)\in\mathscr Z$. 
By the same steps given in the proof of Theorem~\ref{fixedpointtheorem} below,
$$\mathcal L_{(\hat\Phi(Z))_x}(f,t)=\Phi_x(\xi)(f,t),$$ 
whence $\mathcal L_{\hat\Phi(Z)}=\Phi(\mathcal L_Z)$. 
As $\Phi(\xi)$ is the transform of $\hat\Phi(Z)\in\mathscr Z$, $\Phi(\xi)\in\mathbb L^{[0,1]}$.
$\hfill\Box$  \newline

{\sc Proof of Theorem~\ref{fixedpointtheorem}}.
Note that by the multiplicative structure of the excitation term $B_{zx}(0)W(z,x)h(t)$, it follows that conditional on the arrival of a particle in the time interval $[0,u]$, the spatial placement in $[0,1]$ is independent of the temporal placement in $[0,u]$. 
Let $P_t(s)$ be the probability that an offspring event, conditionally upon being born before time $t$, is born before time $s$. 
Let $H(u):=\int_0^uh(v)\ \mathrm dv$ be the integrated excitation function.
As in \cite{Infinite server queues}, p.\ 12, it follows that $P_t(s)=H(s)/H(t)$, with density $p_t(s)=P'_t(s)=h(s)/H(t)$.

The distributional equality \eqref{distributionaleq} for $S_x$ implies that 
\begin{align}
&\eta_x(f,u)=\mathbb E\left[\exp\left(-\sum_{z_i\in S_x(u)}f(z_i)\right)\right]\nonumber\\
&=\gamma_x(f,u)\mathbb E_B\left[\sum_{n\geq0}\mathbb E\left[\exp\left(-\sum_{z_i\in S_x(u),z_i\neq x}f(z_i)\right)\Bigg|K_x(u)=n,B\right]\mathbb P(K_x(u)=n|B)\right],\label{appendixeq1}
\end{align}
where we exclude $x$ from the first summation since, with probability $1$, there is no event with spatial coordinate $x$, and since we factored out $\gamma_x(f,u)=\left(\bar{\mathscr J}_x(u)+\mathscr J_x(u)e^{-f(x)}\right)$. 
By also conditioning on the locations of the $n$ children of the initial event in $x$, \eqref{appendixeq1} equals
\begin{align}
&\gamma_x(f,u)\mathbb E_B\Bigg[\sum_{n\geq0}\mathbb P(K_x(u)=n|B)\int_{[0,1]^n}p^n(y_1,\ldots,y_n)\nonumber\\
&\times\mathbb E\left[\exp\left(-\sum_{\substack{z_i\in S_x(u)\\z_i\neq x}}f(z_i)\right)\Bigg| K_x(u)=n;\text{ locations }y_1,\ldots y_n;B\right]\ \mathrm dy_1\cdots\mathrm dy_n\Bigg]\nonumber\\
&=\gamma_x(f,u)\mathbb E_B\Bigg[\sum_{n\geq0}e^{-\|B_{\cdot x}(0)W(\cdot,x)\|_{L^1[0,1]}H(u)}\frac{\|B_{\cdot x}(0)W(\cdot,x)\|_{L^1[0,1]}^nH(u)^n}{n!}\nonumber\\
&\phantom=\times\int_{[0,1]^n}\prod_{j=1}^n\eta_{y_j}^{(1)}(f,u)\frac{B_{y_jx}(0)W(y_j,x)}{\|B_{\cdot x}(0)W(\cdot,x)\|_{L^1[0,1]}}\ \mathrm dy_1\cdots\mathrm dy_n\Bigg],\label{appendixeq2}
\end{align}
where $\eta_y^{(1)}$ is the transform of a cluster in $y$ that is born between time $0$ and $u$; its input is $u$, which is the time that has elapsed since the event in $x$ occurred, and is therefore different from $\eta_y$, which takes $u-v$ as input when the event in $y$ is born at time $v$. 
The distribution of the temporal location of such an event is given by $P_t(s)$ defined in the first part of this proof. 
Hence, \eqref{appendixeq2} equals
\begin{align}
&\gamma_x(f,u)\mathbb E_B\Bigg[\sum_{n\geq0}e^{-\|B_{\cdot x}(0)W(\cdot,x)\|_{L^1[0,1]}H(u)}\frac{\|B_{\cdot x}(0)W(\cdot,x)\|_{L^1[0,1]}^nH(u)^n}{n!}\nonumber\\
&\times\prod_{j=1}^n\int_0^1\int_0^up_u(s)\eta_{y_j}(f,u-s)\ \mathrm ds\frac{B_{y_jx}(0)W(y_j,x)}{\|B_{\cdot x}(0)W(\cdot,x)\|_{L^1[0,1]}}\ \mathrm dy_j\Bigg]\nonumber\\
&=\gamma_x(f,u)\mathbb E_B\Bigg[e^{-\|B_{\cdot x}(0)W(\cdot,x)\|_{L^1[0,1]}H(u)}\nonumber\\
&\phantom=\times
\sum_{n\geq0}\frac1{n!}\left(\int_0^1\int_0^uh(s)\eta_y(f,u-s)\ \mathrm dsB_{yx}(0)W(y,x)\ \mathrm dy\right)^n\Bigg]\nonumber\\
&=\gamma_x(f,u)\mathbb E\left[\exp\left(\int_0^1\int_0^u(\eta_y(f,u-s)-1)B_{yx}(0)W(y,x)h(s)\ \mathrm ds\mathrm dy\right)\right]\nonumber\\
&=\gamma_x(f,u)\beta_x\left(\left\{y\mapsto\int_0^u(1-\eta_y(f,u-s))W(y,x)h(s)\ \mathrm ds\right\}\right),
\end{align}
recognizing the Laplace functional $\beta_x$ of $B_{\cdot x}$.
$\hfill\Box$  \newline

{\sc Proof of Lemma~\ref{continuity}}.
We present the proof for $p\in[1,\infty)$; the proof for $p=\infty$ follows along the same lines, and is no more difficult.

Take $\xi,\tilde\xi\in\mathbb L^{[0,1]}$, and fix $\epsilon>0$. We aim to find $\delta>0$ such that 
$$d_{\mathbb L^{[0,1]}}^p(\xi,\tilde\xi)<\delta\implies d_{\mathbb L^{[0,1]}}^p(\Phi(\xi),\Phi(\tilde\xi))<\epsilon,$$ so we even prove uniform continuity.
By the definition of $d_{\mathbb L^{[0,1]},p}$, it holds that 
\begin{equation}\label{distance d^p}
d_{\mathbb L^{[0,1]},p}^p(\Phi(\xi),\Phi(\tilde\xi))=\sup_{\substack{u\in[0,t]\\ f\in\mathrm{BM}_+[0,1]}}\int_0^1|\Phi_x(\xi)(f,u)-\Phi_x(\tilde\xi)(f,u)|^p\ \mathrm dx. 
\end{equation}
We first bound the integrand. 
Since $f$ is positive-valued, $\bar{\mathscr J}_x(u)+\mathscr J_x(u)e^{-f(x)}\in[0,1]$. 
Furthermore, $\xi_y$ is a Laplace functional, hence $\xi_y(f,u-s)\in[0,1]$ as well. 
Therefore, by the mean value theorem applied to $x\mapsto e^x$,
\begin{align*}
&|\Phi_x(\xi)(f,u)-\Phi_x(\tilde\xi)(f,u)|\\
&\leq\bigg|\mathbb E\bigg[\exp\left(\int_0^1\int_0^u(\xi_y(f,u-s)-1)B_{yx}(0)W(y,x)h(s)\ \mathrm ds\mathrm dy\right)\\
&\phantom=-\exp\left(\int_0^1\int_0^u(\tilde\xi_y(f,u-s)-1)B_{yx}(0)W(y,x)h(s)\ \mathrm ds\mathrm dy\right)\bigg]\bigg|\\
&\leq\left|\mathbb E\left[\int_0^1\int_0^u(\xi_y(f,u-s)-\tilde\xi_y(f,u-s))B_{yx}(0)W(y,x)h(s)\ \mathrm ds\mathrm dy\right]\right|\\
&\leq u\|h\|_\infty C_BC_W\int_0^1|\xi_y(f,u)-\tilde\xi_y(f,u)|\ \mathrm dy,
\end{align*} 
using Assumption~\ref{ass cw cb}, stating that $\mathbb E[B_{yx}]\leq C_B$ a.s., and $W\leq C_W$.
Since $p\geq1$, the function $x\mapsto x^p$ is convex. 
Jensen's inequality gives 
\begin{align*}
\int_0^1|\Phi_x(\xi)(f,u)-\Phi_x(\tilde\xi)(f,u)|^p\ \mathrm dx&\leq u^p\|h\|_\infty^p C_B^pC_W^p\int_0^1\int_0^1|\xi_y(f,u)-\tilde\xi_y(f,u)|^p\ \mathrm dy\mathrm dx.
\end{align*}
It now follows that 
\begin{align*}
\sup_{\substack{u\in[0,t]\\ f\in\mathrm{BM}_+[0,1]}}\left(\int_0^1|\Phi_x(\xi)(f,u)-\Phi_x(\tilde\xi)(f,u)|^p\ \mathrm dx\right)^{1/p}\leq t\|h\|_\infty C_BC_W\delta.
\end{align*}
Hence, selecting $\delta<\epsilon/(t\|h\|_\infty C_BC_W)$ guarantees $d_{\mathbb L^{[0,1]},p}(\Phi(\xi),\Phi(\tilde\xi))<\epsilon$.
$\hfill\Box$  \newline

\emph{\sc Proof of Lemma~\ref{contractionlemma}}.
We present the proof for $p\in[1,\infty)$, the proof for $p=\infty$ is no more difficult.

By the bounds appearing in the proof of Lemma~\ref{continuity}, it holds that 
\begin{equation*}
\|\Phi_{x}(\xi)(f,u)-\Phi_{x}(\zeta)(f,u)\|_{L^p[0,1]}\leq u\|h\|_\infty C_BC_W\|\xi_x(f,u)-\zeta_x(f,u)\|_{L^p[0,1]}.
\end{equation*}
Since the Laplace functional takes on values in $[0,1]$,
$$\left\|\xi_x^{(1)}(f,u)-\zeta_x^{(1)}(f,u)\right\|_{L^p[0,1]}\leq 2u\|h\|_\infty C_BC_W, $$ verifying \eqref{contractioneq} for $n=1$ and $C:=2\|h\|_\infty C_BC_W$. 
Now suppose the result holds for $n\in\mathbb N$. 
Then again by the bounds appearing in the proof of Lemma~\ref{continuity}, and using Jensen's inequality,
\begin{align*}
\left|\xi_x^{(n+1)}(f,u)-\zeta_x^{(n+1)}(f,u)\right|^p&\leq \|h\|_\infty^p C_B^pC_W^p\int_0^1\left(\int_0^u\left|\xi_y^{(n)}(f,s)-\zeta_y^{(n)}(f,s)\right| \ \mathrm ds\right)^p\mathrm dy\\
&\leq C^p\int_0^1\left(\int_0^u \frac{C^n s^n}{n!}\ \mathrm ds\right)^p\mathrm dy=\left(\frac{C^{n+1}u^{n+1}}{(n+1)!}\right)^p.
\end{align*}
Integrating over $x\in[0,1]$ proves the bound for $n+1$. 
The stated result follows by induction.
$\hfill\Box$  

\end{document}